\documentclass[9pt]{article}
\usepackage{graphicx}

\title{Periods of $E$-operators}

\usepackage[utf8]{inputenc}
\usepackage{geometry}
\date{\vspace{-5ex}}

\usepackage{amsthm}

\newtheorem{theorem}{Theorem}[section]
\newtheorem{corollary}[theorem]{Corollary}
\newtheorem{lemma}[theorem]{Lemma}
\newtheorem{proposition}[theorem]{Proposition}

\theoremstyle{definition}
\newtheorem{definition}[theorem]{Definition}

\theoremstyle{remark}
\newtheorem{remark}[theorem]{Remark}
\newtheorem{aside}[theorem]{Aside}

\newtheorem{assumption}[theorem]{Assumption}

\usepackage{amsmath}
\usepackage{amssymb}
\usepackage{amsfonts}
\usepackage{tensor}
\usepackage{centernot}
\usepackage{mathtools}
\usepackage{xcolor}
\usepackage{hyperref}
\usepackage{bm}
\usepackage{array}
\hypersetup{hidelinks}
\usepackage{doi} 
\usepackage{graphicx}
\usepackage[shortlabels]{enumitem}
\usepackage{tikz-cd}
\usepackage{adjustbox}
\usepackage{wrapfig}
\usepackage{chngcntr}
\usepackage{accents}
\usepackage{simpler-wick}
\usepackage{cleveref}
\usepackage{mathrsfs}
\usepackage{calligra}
\usepackage{tikz}
\usepackage{pgfplots}
\usepackage[dvipsnames]{xcolor}

\renewcommand{\bar}[1]{\accentset{\rule{.4em}{.4pt}}{#1}}

\newcommand{\bb}[1]{\mathbb{#1}}
\newcommand{\mbf}[1]{\mathbf{#1}}
\newcommand{\mfr}[1]{\mathfrak{#1}}
\newcommand{\mcal}[1]{\mathcal{#1}}
\newcommand{\mscr}[1]{\mathscr{#1}}
\newcommand{\spec}[1]{\text{Spec}\,#1}

\newcommand{\abs}[1]{\vert #1 \vert}

\newcommand{\id}[0]{\text{id}}

\newcommand{\modd}[2]{\text{Mod}_{#1}(#2)}
\newcommand{\rob}[0]{\widetilde{\bb{G}_m}}

\newcommand{\D}[1]{\mscr{D}_{#1}}

\newcommand{\qbar}[0]{\overline{\bb{Q}}}
\newcommand{\qu}[0]{\mathbf{A}^{qu}}

\numberwithin{equation}{section}

\newcommand*\diff{\mathop{}\!\mathrm{d}}

\usepackage[T1]{fontenc}
\DeclareFontFamily{T1}{calligra}{}
\DeclareFontShape{T1}{calligra}{m}{n}{<->s*[1.3]callig15}{}
\DeclareMathAlphabet\mathcalligra   {T1}{calligra} {m} {n}
\DeclareMathAlphabet\mathzapf       {T1}{pzc} {mb} {it}
\DeclareMathAlphabet\mathchorus     {T1}{qzc} {m} {n}
\DeclareMathAlphabet\mathrsfso      {U}{rsfso}{m}{n}

\makeatletter
\newcommand{\address}[1]{\gdef\@address{#1}}
\newcommand{\email}[1]{\gdef\@email{\url{#1}}}
\newcommand{\@endstuff}{\par\vspace{\baselineskip}\noindent\small
\begin{tabular}{@{}l}\scshape\@address\\\textit{E-mail address:} \@email\end{tabular}}
\AtEndDocument{\@endstuff}
\makeatother

\author{Ben Snodgrass} 
\address{Ben Snodgrass, Math. Institut, Universit\"at Freiburg, Ernst-Zermelo-Str. 1, 79104 Freiburg, Germany}
\email{benedict.snodgrass@math.uni-freiburg.de}

\begin{document}

\maketitle

\begin{abstract}
    We define an enlargement of the class of exponential periods by applying the rapid decay cohomology theory of Hien to a larger class of integrable connections on varieties than those twisted by a regular function. The main and motivating examples are given by those associated to $E$-operators. This delivers a method to realise a class of absolutely convergent integrals involving $E$-functions as matrix elements of a period pairing, which we call \emph{$E$-periods}. Furthermore, we generalise rapid decay cohomology to singular varieties and prove a version of Nori's basic lemma in this setting.
\end{abstract}

\tableofcontents

\section{Introduction}

In this paper, we introduce a new subset of the complex numbers which can be realised as coefficients of a period pairing between the algebraic de Rham cohomology and the rapid decay cohomology of a class of integrable connection with irregular singularities. They contain the exponential periods of Kontsevich. We also prove a version of Nori's basic lemma in this new setting. \par

The following is a mildly simplified version of the main result. After Andr\'e \cite[Th\'eor\`eme 4.3]{andré00}, any $E$-operator has a basis of solutions of the form
\begin{equation} \label{eqn: andre basis 1}
        \begin{pmatrix}
        F_1 & \dots & F_n
    \end{pmatrix} z^{\Gamma_0},
\end{equation}
where $F_i$ are $E$-functions and $\Gamma_0$ is an upper triangular matrix with entries in $\bb Q$.

\begin{theorem}[See Thm. \ref{Thm: main result} and \S \ref{Sec: determining structure}] \label{Thm: intro}
    Let $X$ be a smooth variety over a number field $k$ equipped with a morphism ${f: X \to \bb G_m}$ and let $\mcal E$ be the connection on $\bb G_m$ associated to an $E$-operator satisfying a condition \nobreak (Assumption \ref{Assump: half plane}). Choose a basis as in \eqref{eqn: andre basis 1}. Then the rapid decay cohomology groups $H^i_{rd}(X, f^*\mcal E)$ inherit a structure over some cyclotomic extension $\bb Q(\zeta)$. In other words, there is a $\bb Q(\zeta)$-vector space $H^i_{rd}(X, f^*\mcal E)_{\bb Q(\zeta)}$ equipped with a canonical isomorphism
    \begin{equation}
        H^i_{rd}(X, f^*\mcal E) \simeq H^i_{rd}(X, f^*\mcal E)_{\bb Q(\zeta)} \otimes_{\bb Q(\zeta)} \bb C.
    \end{equation}
\end{theorem}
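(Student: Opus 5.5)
The plan is to build a $\bb Q(\zeta)$-structure on the local system of flat sections of $\mcal E^\vee$ (or $\mcal E$) on $\bb G_m$ that is compatible with the rapid decay (Stokes) filtration. We then pull it back along $f$ and compute $H^i_{rd}(X, f^*\mcal E)$ as the homology of a complex of rapid decay chains with coefficients in that local system. Concretely, $H^i_{rd}$ is defined, following Hien, as relative homology of the real blow-up $\widetilde X$ of a good compactification, relative to the boundary part where the coefficients are not of rapid decay. It therefore suffices to produce a sheaf of $\bb Q(\zeta)$-vector spaces $\mcal L_{\bb Q(\zeta)} \subset \mcal L = \ker(\nabla^{an})$ on $\widetilde{\bb P^1}$ whose complexification is $\mcal L$, and such that the rapid decay subsheaf at $\infty$ is also defined over $\bb Q(\zeta)$. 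Everything then pulls back along $\tilde f: \widetilde X \to \widetilde{\bb P^1}$ and base-changes through the singular chain complexes, giving $H^i_{rd}(X, f^*\mcal E)_{\bb Q(\zeta)}$ and the comparison isomorphism by flatness of $\bb C$ over $\bb Q(\zeta)$.

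The key steps, in order, are as follows.

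\begin{enumerate}
\item Use the André basis \eqref{eqn: andre basis 1} to define the lattice at $0$. The monodromy around $0$ is $\exp(2\pi i \Gamma_0)$. Since $\Gamma_0$ has rational entries and is upper triangular, its eigenvalues are roots of unity, and the matrix $\exp(2\pi i\Gamma_0)$ has entries in $\bb Q(\zeta)$ for $\zeta$ a root of unity of order the common denominator. This needs care because $\exp$ of the nilpotent part also contributes factors $2\pi i$; I would renormalise the basis by powers of $2\pi i$ to absorb these.

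\item Take the $\bb Q(\zeta)$-span of the columns of $(F_1,\dots,F_n)z^{\Gamma_0}$ over a slit plane. This is monodromy-stable, so it glues to a $\bb Q(\zeta)$-local system on $\bb G_m^{an}$.

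\item Check compatibility with the Stokes structure at $\infty$ (the main obstacle). We need the subsheaf of rapid decay sections at each direction $\theta \in S^1_\infty$ to be spanned by $\bb Q(\zeta)$-rational sections. Here I would invoke Assumption \ref{Assump: half plane}. Presumably it guarantees that the exponential factors $e^{\alpha z}$ in the asymptotic expansion of the $E$-functions at $\infty$ (Siegel–Shidlovskii/André's description of $E$-operators: regular singular at $0$, singularities at $\infty$ of the form $e^{\alpha z}\otimes R_\alpha$) are separated in such a way that, on each relevant sector, the rapid-decay sections are determined by linear conditions with rational coefficients.
\end{enumerate}

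For the third step, I would first try to express the Stokes data via the Laplace/Fourier transform, which turns the $E$-operator into a Fuchsian $G$-operator. The rapid-decay locus then corresponds to cycles determined by monodromy of the $G$-operator around its singular points $\alpha$. The monodromy of such a Fuchsian equation with algebraic exponents is again defined over a cyclotomic field after suitable normalisation. Failing that, I would use the explicit description of the Stokes matrices of hypergeometric-type $E$-operators as a guide.

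Once the filtration is rational, the pullback of sheaves under $\tilde f$ preserves the rational structure. Computing $H^i_{rd}$ via the chain complex of piecewise-smooth chains with coefficients in $\tilde f^{-1}\mcal L_{\bb Q(\zeta)}$, relative to the non-rapid-decay boundary, gives a $\bb Q(\zeta)$-complex whose complexification is Hien's complex. Homology commutes with $-\otimes_{\bb Q(\zeta)}\bb C$, which yields the canonical isomorphism.
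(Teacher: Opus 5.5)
\textbf{There is a genuine gap in step 3.} Your steps 1 and 2 are exactly how the paper puts a $\bb Q(\zeta)$-structure on $L$ (\S\ref{Sec: determining structure}). Step 3, however, is not merely the hard part: it is false in general, and avoiding it is the point of the paper.

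For the structure coming from the Andr\'e basis, the moderate-growth subsheaf $L^{mod}$ (equivalently, the Stokes filtration) is typically not $\bb Q(\zeta)$-rational. The reason is that the Stokes constants of an $E$-operator involve $G$-values, $\Gamma(a)$ and $\gamma$ \cite{FischRiv14}; see \S\ref{Sec: lit review}.

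Here is a concrete instance. Twist the Bessel operator by $e^{2z}$, so that the exponents $2\pm i$ satisfy Assumption \ref{Assump: half plane}. Then $\zeta=1$, and one valid rational basis is $e_1=2\pi i\,e^{2z}J_0$, $e_2=e^{2z}(F_2+J_0\log z)$ with $F_2=\sum_{k\geq 1}(-1)^{k+1}H_k\frac{(z/2)^{2k}}{(k!)^2}$. In the direction $\arg z=\pi/2$ only $e^{(2-i)z}$ grows, so $L^{mod}$ there is the line spanned by $e^{2z}H_0^{(1)}$. The classical expansion of $Y_0$ gives $e^{2z}H_0^{(1)}=\frac{1}{2\pi i}\bigl(1+\frac{2i(\gamma-\log 2)}{\pi}\bigr)e_1+\frac{2i}{\pi}e_2$. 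The coefficient ratio is $-\frac14-\frac{i(\gamma-\log 2)}{2\pi}$, which is not even real, so this line is not $\bb Q$-rational.

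Your Laplace-transform fallback does not repair this. At best it produces some other rational structure adapted to the Stokes data, and comparing that structure with the Andr\'e basis reintroduces exactly these constants.

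Relatedly, your description of $H^i_{rd}$ as relative cohomology of $L$ with respect to a closed ``non-rapid-decay'' part of the boundary is not available for a general Stokes structure. The stalks of $L^{mod}$ have varying intermediate rank, so this description already presupposes the reduction that has to be proved.

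\textbf{The missing idea} is that Assumption \ref{Assump: half plane} makes the Stokes data irrelevant rather than rational.
\begin{enumerate}
\item Choose $\sigma\in\bigcap_j S_j$. This is a direction at infinity where every non-zero horizontal section grows exponentially, so $L^{mod}_\sigma=0$.
\item Lemma \ref{Lem: L^> isom} identifies $L/L^{mod}$ with $\bigoplus_j(\iota_{\overline S_j})_*\bb C^{r_j}$, a sum of constant sheaves on closed arcs that all contain $\sigma$.
\item Apply $(\cdot)_{[\rob,\sigma]}$, which leaves $L^{mod}$ unchanged, then pull back along $\tilde f$.
\item Use that $\tilde f^{-1}(\sigma)\hookrightarrow\tilde f^{-1}(\overline S_j)$ induces an isomorphism on cohomology (via \cite[Prop.~10.4]{CHH20}). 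This gives Theorem \ref{Thm: L* calcs RDC}: $H^i_{rd}(X,Y,f^*\mcal E)\simeq H^i(\widetilde X,\widetilde Y\cup\tilde f^{-1}(\sigma),\tilde f^{-1}L)$.
\end{enumerate}
Only the full local system $L$ appears in this formula. Hence any $K$-structure on $L$, with no compatibility with the Stokes filtration, induces one on $H^i_{rd}$ (Theorem \ref{Thm: main result}). Your steps 1--2 then complete the proof.
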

\noindent We also prove a version that is independent of the choice of basis \eqref{eqn: andre basis 1}, see Remark \ref{Rem: non canon}. \vspace{0.1cm}

Together with the period isomorphism of Hien \cite{Hien09}, this allows us to define \emph{$E$-periods} as the matrix elements of this isomorphism with respect to this $\bb Q(\zeta)$-structure, see Definition \ref{Defn: E period}. \par

\vspace{0.3cm}

Periods are complex numbers which describe the comparison isomorphism between algebraic de Rham cohomology of a variety $X$ over a number field $k$ and the singular cohomology of its analytification. This isomorphism can be interpreted as a pairing of algebraic differential forms on $X$ with cycles in its singular homology groups, given by integration. Their study goes back to Grothendieck, who formulated his famous period conjecture on the transcendence properties of periods. Beginning with a letter from Deligne to Malgrange \cite[p. 17]{DPM07} and a review of Kontsevich and Zagier \cite{KZ01}, so-called \emph{exponential periods} have come to the fore. Examples of exponential periods include $e^\alpha$ for any $\alpha \in \qbar^\times$, $\sqrt \pi$ and $\gamma$, the Euler constant, all of which are not expected to be classical periods. Both classical and exponential periods are described by certain `Galois groups', which can be constructed from the motivic theories of Nori \cite{HMS17} and Fres\`an and Jossen \cite{FJexp} respectively. \par
The main approach begins by calculating the cohomology groups of a twisted algebraic de Rham complex, with differential given by $\diff - \diff f$, where $f: X \to \bb{A}^1$ is a regular function on a smooth variety $X$. This is the de Rham complex of an integrable connection which we write as $\mcal{O}_X e^f$. Its cohomology groups are finite-dimensional $k$-vector spaces. Beginning with Bloch and \'Esnault \cite{blochesnault} and developed further by Hien and Roucairol \cite{HienRouc08}, a natural dual to this cohomology theory called \emph{rapid decay homology} $H_i^{rd}(X, f)_\bb{Q} \in \text{Vec}_\bb{Q}$ was developed. Roughly speaking, this is a kind of singular homology in which the cycles are permitted to approach infinity in regions where $e^{-f}$ decays exponentially. It can be defined via relative singular cohomology, and hence is naturally a $\bb{Q}$-vector space. Furthermore, there is a comparison isomorphism
\begin{equation} \label{eqn: PP exp}
    H^i_{dR}(X, f) \otimes_k \bb{C} \xrightarrow{\sim} H^i_{rd}(X, f)_{\bb{Q}}\otimes_\bb{Q} \bb{C},
\end{equation}
and exponential periods may be defined as the complex numbers needed to describe this map, that is to say, the entries of its matrix with respect to a $k$-basis of $H^i_{dR} \otimes_k \bb{C}$ and a $\bb{Q}$-basis of $H^i_{rd} \otimes_\bb{Q} \bb{C}$. \par

In \cite{KontSoib11} and subsequently in the book \cite{FJexp}, many tools to study these numbers are developed, including a category of exponential motives and various realisation functors emanating from this category. In particular, there is a \emph{perverse realisation} of an exponential motive; this is a perverse sheaf on the affine line $\bb{A}^1$ with vanishing cohomology. The rapid decay cohomology of $(X, f)$ can be recovered from it. Such sheaves possess pleasant properties, making their study a fruitful one, for example when calculating Galois groups of exponential motives. \par

If $f$ is not locally constant, the differential $\diff - \diff f$ defines a connection $\mcal{O}_Xe^f$ with irregular singularities at infinity. Far more generally however, any algebraic connection $\mcal{F}$ induces an associated de Rham complex, whose cohomology groups are finite-dimensional $k$-vector spaces. In \cite{Hien09}, Hien developed a dual rapid decay homology theory $H^{rd}_\bullet(X, \mcal{F})$, completing the picture begun by the above authors. There exists a canonical isomorphism
\begin{equation}
    H^i_{dR}(X, \mcal{F}) \otimes_k \bb{C} \xrightarrow{\sim} H^i_{rd}(X, \mcal{F}).
\end{equation}
In the case that $\mcal{F} = \mcal{O}_Xe^f$ is the connection inducing the above twisted de Rham complex, this rapid decay cohomology theory agrees with that described above after extending scalars to $\bb{C}$, i.e. $H^i_{rd}(X, \mcal{F}) = H^i_{rd}(X, f)_\bb{Q} \otimes_\bb{Q} \bb{C}$ and the induced period isomorphism agrees with \eqref{eqn: PP exp}. It is then natural to ask if $H^i_{rd}(X, \mcal{F})$ can be endowed with a $\bb{Q}$-structure, or perhaps with a $K$-structure where $K/\bb{Q}$ is a `small' field extension, in order to define a new class of periods. \par

In general, there is no obvious way to do this. The most natural approach would be to associate to the complex of sheaves which calculates rapid decay cohomology a $K$-structure, for which it is necessary, although \emph{not sufficient} (see \S \ref{Sec: lit review} below), to endow the local system of horizontal sections of $\mcal{F}$ with a $K$\nobreakdash-structure. In the classical case, this local system is $\bb{C}$, for which we can naturally choose the basis $1 \in \bb{C}$ to define our $\bb{Q}$-structure. In the exponential case, the local system is $\bb{C}e^f$, for which we can naturally choose the basis $1\cdot e^f$. For an arbitrary connection however, there are no forthcoming horizontal sections that make for a particularly `natural choice' of generators which could be used to define periods. This choice is vital, since for example in the classical case, had we chosen a number $z \in \bb C$ to define our $\bb Q$-structure as $\bb Q \cdot z \subset \bb C$, we would find $z$ to be a period of $\spec \bb Q$. Thus with an appropriate choice of $\bb Q$-structure, any complex number is a period. Of course, we wish to exclude these unnatural choices. \par 

An $E$-function is a power series $$F(z) = \sum_{n \geq 0} \frac{a_n}{n!}z^n$$ with algebraic coefficients satisfying
\begin{enumerate}
    \item $\Psi F = 0$ for some algebraic differential operator $\Psi \in \qbar[z, \partial_z]$,
    \item a growth condition on the $a_n$ and their denominators (see \cite{andré00} for the precise definition).
\end{enumerate}
$E$-functions generalise the exponential function; indeed, taking $a_n = 1$ gives $F(z) = e^z$. Other examples include the Bessel functions and the exponential integral function. Much is known about the transcendence properties of their special values, most notably the celebrated Siegel-Shidlovskii theorem \cite{NestShid96, andre00_2, Beukers06}. \par

$E$-operators, as defined by Andr\'e \cite{andré00}, are certain algebraic differential operators $\Psi \in \qbar[z, \partial_z]$ without singularities outside $\{0, \infty\}$ such that all solutions of the differential equation $\Psi F=0$ are given by $\bb C$-linear sums and products of $E$-functions, rational powers of $z$ and the logarithm $\log z$. All $E$-functions are annihilated by some $E$-operator \cite[Th\'eor\`eme 4.2]{andré00}. 

\subsection{Overview of this article}

In this paper, we consider a special case in which there is a natural way to choose a rational structure over a small field, namely some cyclotomic field $\bb Q(\zeta)$, where we take a connection $\mcal E$ on $\bb{G}_m$ of \emph{exponential type}, satisfying two additional mild conditions. The connection associated to an $E$-operator automatically satisfies two of the three conditions. Considered as meromorphic connections on $\bb P^1$, such objects have a regular singularity at $0$ and an irregular singularity at $\infty$. Under an assumption on $\mcal E$ (see \ref{Assump: half plane}), we endow the rapid decay cohomology groups of $f^*\mcal{E}$ with a rational structure over $\bb Q(\zeta)$, where $f: X \to \bb{G}_m$ is a non-vanishing regular function. For this, we use a technical result in \cite{CHH20}. This permits us to define a new class of periods. In the case that $\mcal{E}$ is associated to an $E$-operator, we call the complex numbers appearing in the resulting pairing \emph{$E$-periods} (see Definition \ref{Defn: E period}). \par

Nori's basic lemma is the most vital ingredient in the construction of the monoidal structure on the category of classical Nori motives \cite[\S 2.5]{HMS17}. It was generalised in \cite[\S 3.3]{FJexp} to the exponential case, where it serves a similar purpose in the category of exponential motives. In the process of defining our periods, we shall find that we may, analogously to the method in \cite{FJexp}, construct a perverse sheaf on $\bb{A}^1$ associated to the triple $(X, f, \mcal{E})$ from which rapid decay cohomology can again be recovered. As an almost immediate consequence of a famous result of Beilinson, we shall prove a basic lemma in our setting (Corollary \ref{Corr: basic lemma}). Furthermore, we find through our investigation that one can define rapid decay cohomology in this setting even for singular varieties (see Defn. \ref{Defn: sing RDC}). This may open the door to a motivic description of $E$-periods \`a la Nori. \par

This work suggests some natural questions: for example, how does the ring of $E$-periods relate to the ring of exponential periods? Exponential periods are known to be closely linked to the ring of \emph{$E$-values}, the complex numbers obtained by evaluating an $E$-function at an algebraic argument. In an upcoming work, Fres\'an and Jossen prove very concrete results in this direction, a couple of which are already stated in the notes \cite[Thm. 1.6]{Fresannotes}, see also Section \ref{Sec: Conc}. Whilst it is not thought that all exponential periods are $E$-values or vice versa, both form subsets of the $E$-periods and thus the latter could form a natural setting in which to study these relationships. We return to these questions in the conclusion. \par

\subsection{Comparison with previous work} \label{Sec: lit review}

The main inspiration for this article is the body of work on algebraic connections twisted by a regular function and the associated tools. In vague terms, what we show here is that some methods in the twisted case can be applied in a similar way to study a class of connections that are, on the face of it, significantly more complicated. The main obstruction is the Stokes phenomenon, which states that the behaviour of a solution to a differential equation around an irregular singularity is usually extremely erratic, which \emph{a priori} makes its rapid decay cohomology groups very difficult to calculate. \par

In \cite{FSY23}, a similar line of investigation is carried out. They also endow their cohomology groups with a rational structure over a field $K$ under a certain assumption (Assumption 2.22 in loc. cit.)\footnote{In the notation of Sec. \ref{sec: Conn exp type} in this paper, their assumption is equivalent to the given $K$-structure on $L$ descending to $L^{mod}$.}. In Hien's original article \cite[\S 3.3]{Hien09}, a similar assumption is made in order to define a `period determinant'. If the connection is irregular however, this field must usually be taken to be very large in order to allow the assumption to be satisfied. To see this, one does not need to look further than the main example in this paper: $E$-operators. In \cite{FischRiv14}, it is shown that the so-called \emph{Stokes constants} of an $E$-operator $\Psi$ lie in the ring
\begin{equation}
    \bb Q[\{G\text{-values}\}, \{\Gamma(a)_{a \in \bb Q\cap (0, 1)}\}, \gamma],
\end{equation}
where $G$-values are defined as the complex numbers obtained by evaluating a $G$-function at an algebraic argument and $\gamma$ is the Euler constant. Conjecturally, $G$-values are exactly the classical periods; needless to say, this ring contains many transcendental numbers. If we wish to have a $K$-structure compatible with a basis as in \eqref{eqn: andre basis 1}, $K$ must contain all of the Stokes constants of $\Psi$. To get around this, one would need to find a different basis such that the respective Stokes constants lie in a `small' field. To verify to what extent this is possible would be arduous, and the `optimal' choices would likely not be natural or satisfactory. \par

In this paper, we concentrate on a particular class of connections and replace the assumption in loc. cit. by another (\ref{Assump: half plane}), which is satisfied very often and is elementary to check. Under this assumption, \emph{any} $K$-structure on the sheaf of horizontal sections induces canonically a $K$-structure on the rapid decay cohomology groups (Thm. \ref{Thm: main result}). In other words, the Stokes constants do not come into play. This means we can take $K$ to be vastly smaller than typically necessary to fulfil the assumption in either of the aforementioned papers; in fact, the only limiting factor is the monodromy of the horizontal sections. If the monodromy representation has algebraic eigenvalues for example, it is possible to take $K$ to be a number field.

\subsection*{Outline of structure}

In Section \ref{sec: Conn exp type}, we perform the calculations that will allow us to endow the rapid decay cohomology groups with an $K$-rational structure. In Section \ref{Sec: review of perv}, we review the category $\text{Perv}_0$ and a construction of the perverse realisation of a variety with potential. In Section \ref{Sec: gen to other LS}, we show that applying a similar construction to a larger class of local systems $L$ on $\bb{G}_m$ (instead of the trivial one) also results in a perverse sheaf. We next use the findings of Section \ref{sec: Conn exp type} in Section \ref{Sec: comp to perv} to interpret certain stalks of the perverse sheaves we constructed in Section \ref{Sec: gen to other LS} as the rapid decay cohomology of the connection $f^*\mcal{E}$, where $f, \mcal{E}$ are as above. As an immediate corollary of this, we show that a basic lemma holds also in our setting. In Section \ref{Sec: rational structures} we present our main result and define $E$-periods. Finally, in Section \ref{Sec: Conc} we speculate about the relationship between exponential periods, the ring of $E$-values (special values of $E$-functions) and $E$-periods.

\subsection*{Acknowledgements}

I thank my PhD supervisor, Annette Huber, for suggesting that I investigate periods of irregular connections. I am grateful also to Javier Fres\'an for his guidance and many enlightening discussions. \par

\vspace{0.3cm}

In keeping with the guidelines of the Leiden Declaration \cite{Leiden}, the author hereby discloses that Artificial Intelligence, namely Claude (Anthropic), was used in the research leading to this manuscript solely as an occasional tool to survey literature, that is, for enquiries pertaining to the existence of results and research directions. All text, images, proof ideas and examples appearing in the following text were produced without the help of AI.

\subsubsection*{Notation and conventions}

Throughout, we stick to the following notation:
\begin{enumerate}
    \item $k$ will denote a number field with a fixed (implicit) embedding $k \hookrightarrow \bb C$,
    \item (unless otherwise stated) $X$ is a smooth variety over $k$ and $Y \subset X$ a divisor with simple normal crossings,
    \item $p_1: X \times \bb{A}^1 \to X$ and $p_2: X \times \bb{A}^1 \to \bb{A}^1$ denote the projections,
    \item (unless otherwise stated), $f$ will be a morphism $f: X \to \bb{G}_m$,
    \item $\Gamma \subset X \times \bb{A}^1$ denotes the graph of the composite $X \xrightarrow{f} \bb{G}_m \hookrightarrow \bb{A}^1$.
\end{enumerate}
Although we work almost exclusively with algebraic vector bundles with connection, we use the convenient language of $\D{}$-modules throughout. \par
Usually, we work in the analytic topology of each variety. However, all $\D{}$-modules are assumed to be the analytification of algebraic $\D{}$-modules defined over $k$. Furthermore, a constructible sheaf is taken to be constructible with respect to an \emph{algebraic} stratification. \par
Let $A \subset Z$ be a closed subset of a topological space $Z$. We denote the embeddings of $A$ and its complement as $A \xrightarrow{\iota_A} Z \xleftarrow{j_A} Z \setminus A$ and for a sheaf $\mscr{F}$ on $Z$ we use the notation
\begin{equation} \label{eqn: [Z, A]}
    \mscr{F}_{[Z, A]} := (j_A)_!(j_A)^{-1} \mscr{F}.
\end{equation}
Note that the functor $\mscr{F} \to \mscr{F}_{[Z, A]}$ is exact. For any sheaf of abelian groups $\mscr{F}$ on $Z$, there is the associated exact sequence
\begin{equation}
    0 \to \mscr{F}_{[Z, A]} \to \mscr{F} \to (\iota_A)_*\iota_A^{-1}\mscr{F} \to 0,
\end{equation}
which induces the long exact sequence in relative cohomology with coefficients in $\mscr{F}$. We shall use the fact that given another topological space $Y$ and a map $g: Y \to Z$, we have
\begin{equation} \label{eqn: [Z, A] pullback}
    g^{-1}\mscr{F}_{[Z, A]} \simeq (g^{-1}\mscr{F})_{[Y, g^{-1}(A)]}.
\end{equation}

In the definition of rapid decay cohomology, the \emph{oriented real blow-up} of a variety $\overline{X}$ along a simple normal crossings divisor $D \subset \overline{X}$ is required, which we write as $\text{OBl}_D(\overline{X})$. We set throughout
\begin{equation}
\rob := \text{OBl}_{\infty}(\bb{P}^1\setminus \{0\}),
\end{equation}
which is diffeomorphic to a closed punctured disc. In particular, it is \emph{not} $\text{OBl}_{\{0, \infty\}}\bb P^1$, which is a punctured disc compactified by two circles; one at its centre, one at its outer boundary. \\

\vspace{1pt}

\noindent \textbf{N.B.} In this paper, $\overline{X}$ is not typically compact; a compactification of a variety $X$ is written as $\overline{X}_c$. \par

\section{Connections of exponential type} \label{sec: Conn exp type}

In this section, we recall Hien's notion of rapid decay homology \cite{Hien09} and give the definition of the dual cohomology theory along with its relative version. We then take a connection of exponential type $\mcal{E}$ on $\bb{G}_m$ satisfying a certain condition (see \ref{Assump: half plane}) and construct a new sheaf that also calculates rapid decay cohomology of $f^*\mcal{E}$, where $f: X \to \bb{G}_m$ is a morphism. In practice, we are interested in the case that $\mcal E$ is associated to an $E$-operator, but here we can be more general at no extra cost.

\subsection{Rapid decay cohomology} \label{Subsec: RDC}

Let $\mcal F$ be an integrable connection on a smooth variety $X$ over a number field $k$, that is, a $\D{X}$-module which is locally free as an $\mcal{O}_X$-module. Hien \cite{Hien09} defines certain so-called \emph{rapid decay homology} groups $H^{rd}_i(X, \mcal F)$ which are constructed so that there is a canonical perfect pairing between them and the algebraic de Rham cohomology groups of $\mcal F$. To the knowledge of the author, the definition of the dual rapid decay \emph{cohomology} groups does not exist explicitly in the literature in generality, nor that of \emph{relative} rapid decay (co)homology. We thus begin with a brief recapitulation of Hien's theory and give the definitions of these groups. \par

Let $\overline{X}_c \supset X$ be a so-called \emph{good compactification} with respect to $\mcal F$ as defined by Mochizuki (see \cite[\S 3]{Hien09}) - such things always exist. The precise definition is rather involved, but the most important properties are that the complement $D := \overline{X}_c\setminus X$ is a divisor with simple normal crossings and that the formal completion along each component of $D$ decomposes into products of regular singular connections and connections of the form $\mcal{O}e^f$, just as in the one-dimensional Levelt-Turritin case (Eqn. \eqref{eqn: general formal decomp}). \par 

We set $(\widetilde{X}_c, \pi: \widetilde{X}_c \to \overline{X}_c)$ to be the oriented real blow-up along the normal-crossings divisor $D := \overline{X}_c\setminus X$ (see \cite[\S 4]{CHH20} or \cite[\S 3.4]{FJexp}). This is a real manifold with corners. There is a natural open embedding of the (analytified) variety $\tilde{\jmath}: X \hookrightarrow \widetilde{X}_c$ into the blow-up. \par

Let $\mcal{A}^{\text{mod}D}_{\widetilde{X}_c}$ (resp. $\mcal{A}^{<D}_{\widetilde{X}_c}$) be the sheaf of $C^\infty(\widetilde{X}_c)$ functions of moderate growth along $\widetilde{D} := \pi^{-1}(D)$ (resp. flat functions on $\widetilde{D}$), meaning growth not faster than polynomials in local coordinates (resp. meaning all derivatives tend to $0$ upon approaching the divisor), whose restrictions to $X \subset \widetilde{X}_c$ are holomorphic. To $\mcal F$ there is an associated analytic de Rham complex $\text{DR}(X, \mcal F)$ on $X$. We can also define its \emph{moderate} and \emph{asymptotically flat} de Rham complexes as 
\begin{equation}
\begin{aligned}
    \text{DR}^{\text{mod} D}(\widetilde{X}_c, \widetilde{D}, \mcal F) &:= \mcal{A}^{\text{mod}D}_{\widetilde{X}_c} \otimes_{\pi^{-1}\mcal{O}_{\overline X_c}} \pi^{-1}\text{DR}(\overline{X}_c, D, \mcal F) \\
    \text{DR}^{<D}(\widetilde{X}_c, \widetilde{D}, \mcal F) &:= \mcal{A}^{<D}_{\widetilde{X}_c} \otimes_{\pi^{-1}\mcal{O}_{\overline X_c}} \pi^{-1}\text{DR}(\overline{X}_c, D, \mcal F)
\end{aligned}
\end{equation}
respectively. Denote by $\mcal F^\vee := \mcal{H}om_{\mcal{O}_X}(\mcal F, \mcal{O}_X)$ the dual $\D{X}$-module. Hien \cite[Theorem 3]{Hien09} proves there is a pairing
\begin{equation}
    \text{DR}^{\text{mod} D}(\widetilde{X}_c, \widetilde{D}, \mcal F) \otimes_\bb{C} \text{DR}^{<D}(\widetilde{X}_c, \widetilde{D}, \mcal F^\vee) \to \tilde{\jmath}_!\bb{C}
\end{equation}
which is perfect is the sense that the induced maps
\begin{equation}
\begin{gathered}
    \text{DR}^{\text{mod} D}(\widetilde{X}_c, \widetilde{D}, \mcal F) \to R\mcal{H}om(\text{DR}^{<D}(\widetilde{X}_c, \widetilde{D}, \mcal F^\vee), \tilde{\jmath}_!\bb{C}),  \\
    \text{DR}^{<D}(\widetilde{X}_c, \widetilde{D}, \mcal F^\vee) \to R\mcal{H}om(\text{DR}^{\text{mod} D}(\widetilde{X}_c, \widetilde{D}, \mcal F), \tilde{\jmath}_!\bb{C})
\end{gathered}
\end{equation}
are isomorphisms in $D^b(\bb{C})$. \par
\emph{Rapid decay homology} is defined as
\begin{equation}
    H_i^{rd}(X, \mcal F):= R^{-i}\Gamma(\widetilde{X}_c, (\mcal{C}^{rd}_{\widetilde X_c})^\bullet(\mcal F))
\end{equation}
where $(\mcal{C}^{rd}_{\widetilde X_c})^\bullet(\mcal F)$ is a complex of sheaves which, roughly speaking, consists of tensor products of standard topological simplices with horizontal sections of rapid decay along $\widetilde{D}$. See \S 5.1 in loc. cit. for the precise definition. Since the resulting cohomology groups are independent of the compactification $(\overline{X}_c, D)$ (Corollary 1 in loc. cit.), we omit this from the notation. Furthermore, there is an isomorphism (Proposition 2 in loc. cit.)
\begin{equation}
    (\mcal{C}^{rd}_{\widetilde X_c})^\bullet(\mcal F) \simeq \text{DR}^{<D}(\widetilde{X}_c, \widetilde{D}, \mcal F)[2d].
\end{equation}
We therefore define \emph{rapid decay cohomology} as
\begin{equation}
    H^i_{rd}(X, \mcal F) := R^i\Gamma\left(\widetilde{X}_c, R\mcal{H}om((\mcal{C}^{rd}_{\widetilde X})^\bullet(\mcal F)[-2d], \tilde{\jmath}_!\bb{C})\right).
\end{equation}
Putting the above results together, we see that
\begin{equation}
    H^i_{rd}(X, \mcal F) \simeq R^i\Gamma\left(\widetilde{X}_c, \text{DR}^{\text{mod} D}(\widetilde{X}_c, \widetilde{D}, \mcal F)\right).
\end{equation}
Let $L := \tilde\jmath_*\mcal H^0\left(\text{DR}(X, \mcal F)\right)$ be the local system of horizontal sections of $\mcal F$ pushed forward to $\widetilde X_c$. Proposition 1 in loc. cit. states that the moderate de Rham complex has non-zero cohomology only in degree $0$. More precisely,
\begin{equation}
    \text{DR}^{\text{mod} D}(\widetilde{X}_c, \widetilde{D}, \mcal F) \simeq L^{\text{mod}},
\end{equation}
that is, $L^{mod} \subset L$, the sheaf of horizontal sections of $\mcal F$ with moderate growth along $\widetilde{D}$, calculates rapid decay cohomology. \par
Let $Y \subset X$ be a closed subvariety. We define $\widetilde{Y}_c \subset \widetilde{X}_c$ to be the closure of $Y$ inside $\widetilde{X}_c$ and we let $\tilde\iota: \widetilde{Y}_c \to \widetilde{X}_c$ be the inclusion.

\begin{definition}[Relative rapid decay cohomology] \label{Defn: relative RDC}
    Let $\mcal F$ be a connection on $X$ and let $\overline{X}_c \supset X$ be a good compactification with respect to $\mcal F$. We define the \emph{relative rapid decay cohomology} of the triple $(X, Y, \mcal F)$ as
    \begin{equation}
        H^i_{rd}(X, Y, \mcal F) := R^i\Gamma\left(\widetilde{X}_c,\, \text{Tot} \left(\text{DR}^{mod}(\widetilde{X}_c, \widetilde{D}, \mcal F) \to \tilde{\iota}_*\tilde{\iota}^*\text{DR}^{mod}(\widetilde{X}_c, \widetilde{D}, \mcal F) \right)\right),
    \end{equation}
    viewing the morphism of de Rham complexes as a double complex. Using the above results from \cite{Hien09}, we see, recalling the notation \eqref{eqn: [Z, A]},
    \begin{equation}
        H^i_{rd}(X, Y, \mcal F) \simeq R^i\Gamma\left(\widetilde{X}_c, L^{mod}_{[\widetilde X_c, \widetilde Y_c]}\right).
    \end{equation}
\end{definition}

\subsection{Connections of exponential type} \label{Subsec: conns of exp type}

Let $\mcal{F}$ be a meromorphic connection over $k(t)$. We may consider the formal completion over a ramified cover $\widehat{\mcal{F}}_b := k((t^{\frac{1}{b}})) \otimes_{k(t)} \mcal{F}$, $b \geq 1$. Recall the following well-known result of Levelt and Turrittin \cite{Turrittin1955, Lev75}, see also \cite{weatherhog}. For some (minimal) $b$ there exist regular singular connections $R_i$ over $\bb{C}(t^{\frac{1}{b}})$ and distinct exponents $f_i \in k((t^{\frac{1}{b}}))/t^{-1}k[[t^{\frac{1}{b}}]]$ such that
\begin{equation} \label{eqn: general formal decomp}
    \widehat{\mcal F}_b \simeq \bigoplus_{i\in I} R_i \otimes_{k(t^{\frac{1}{b}})} k(t^{\frac{1}{b}})e^{f_i},
\end{equation}
where $I$ is an index set and $k(t^{\frac{1}{b}})e^{f_i}$ is the $k(t^{\frac{1}{b}})\langle \partial \rangle$-module whereby the differential $\partial: k(t^{\frac{1}{b}}) \to k(t^{\frac{1}{b}})$ acts as $\partial(g) = \frac{\partial g}{\partial t} - g\frac{\partial f_i}{\partial t}$. \par \vspace{0.2cm}
We set $r_i$ to be the rank of $R_i$. Let $j: \bb{G}_m \to \bb{P}^1$ be the inclusion.
\begin{assumption}[\textbf{A}] \label{Assump on E}
We make the following set of assumptions about all $\D{\bb{G}_m}$-modules $\mcal E$ that we consider. We refer to them collectively as $\mathbf{A}$.
    \begin{enumerate}
    \item $j_+\mcal{E}$ has no singularities outside $\{0, \infty\}$ and the singularity at $0$ is regular.
    \item The singularity at $\infty$ is of \emph{exponential type}, meaning there is a formal decomposition as in \eqref{eqn: general formal decomp} of the form
    \begin{equation}
        (j_+\mcal E)_\infty \otimes k((z^{-1})) \simeq \bigoplus_{i\in I} R_i \otimes k(z^{-1})e^{\lambda_iz}
    \end{equation}
    for some $\lambda_i \in \qbar^\times$. In other words, $b=1$ and the exponents are linear in $z$.
\end{enumerate}
\end{assumption}
In Section \ref{Sec: comp to perv}, we make the following additional assumption:
\begin{enumerate}
    \item[($*$)] The associated local system of horizontal sections associated to $\mcal E$ has quasi-unipotent monodromy.
\end{enumerate}
The assumptions $\mathbf{A}$ together with $(*)$ are denoted by $\mathbf{A}^{qu}$.

\begin{remark}
    The assumptions $\mathbf{A}$ imply (see e.g. \cite[\S 18]{lodayrichaud91}) that there exists a formal basis of solutions at $\infty$ of the form
    \begin{equation} \label{eqn: basis at infty}
        \left(\mfr{f}_1\left(z^{-1}\right), \dots, \mfr{f}_n\left(z^{-1}\right)\right) e^{\Delta z} (z^{-1})^{\Gamma_\infty}
    \end{equation}
    where $\Delta$ is a diagonal matrix with diagonal entries $\lambda_i$, each $\lambda_i$ appearing with multiplicity $r_i$, $\Gamma_\infty$ is an upper-triangular matrix with entries in $\bb Q$ such that $\Delta$ and $\Gamma$ commute and
    \begin{equation}
        \mfr{f}_j\left(z^{-1}\right) \in \bb{C}\left[\left[z^{-1}\right]\right]
    \end{equation}
    are Gevrey series of order $1$. This means that the coefficients of the power series do not grow faster than $K^nn!$ for some $K > 0$.
\end{remark}

\begin{remark}[{\cite[Th\'eor\`eme 4.3]{andré00}}]
    The $\D{\bb G_m}$-module $\mcal E := \D{\bb G_m}/\D{\bb G_m}\Psi$ associated to an $E$-operator $\Psi$ satisfies $\qu$. In this case, we say $\mcal E$ is of \emph{type $E$}.
\end{remark}

We set 
\begin{equation}
    \rob := \text{OBl}_{\infty}(\bb{P}^1 \setminus \{0\}).
\end{equation}
Thus $\rob$ is diffeomorphic to a closed punctured disc. We use the following notation for sectors of opening $\geq \pi$ on the circle at infinity:
\begin{equation}
    S_{\theta, \varepsilon} := \left\{z \in \partial\rob \, \Big\vert \ \ \abs{\arg{z} - \theta} < \frac{\pi}{2} + \varepsilon\right\} \subset \rob,
\end{equation}
and set $S_\theta := S_{\theta, 0}$. We call such sectors \emph{large}. \par
In the following, we write (see Figure 1)
\begin{align*}
    \theta_i &= -\arg \lambda_i \in \bb{R}/2\pi i  \bb{Z} \\
    \rho_i &= e^{i\theta_i}\cdot \infty \in S^1 \cdot \infty \equiv \partial \rob \\
    S_i &= S_{\theta_i}
\end{align*}
This means $S_i$ is the sector of $\partial\rob$ on which $e^{\lambda_iz}$ \emph{grows} (not decays) exponentially and $\rho_i$, its centre, is the direction at infinity in which $e^{\lambda_i z}$ grows most quickly. We also abuse notation and tacitly consider $L$ as a sheaf on $\rob$ (meaning its direct image via $\bb{G}_m \hookrightarrow \rob$). \par

\usetikzlibrary{math, angles, calc}

\tikzmath{\x1 = 2.2; \y1 = 1.5; \t = atan(\y1/\x1); \R = 5; \r = 0.7;}

\begin{figure}[h]
	\centering
	\begin{tikzpicture}
        \filldraw[fill=yellow!10!white, draw=black] (5,5) circle (\R cm);
        \draw[dashed] (5, 5) -- ({(5+\R*cos(\t))}, {(5+\R*sin(\t))});
        \draw[dashed] (5, 5) -- (5+\x1, 5-\y1);
        \draw (5-\R, 5) -- (5+\R, 5);
        \node at (4.5+\R, 5.2) {\large $\bb R$};
        \node at (7.5, 6.3) {$\bar\lambda_i$};        
        \draw ({5+\r*cos(\t)},{5-\r*sin(\t)}) arc (-\t:\t:\r);
        \draw[color = RoyalBlue, very thick] ({5+\R*cos(\t-90)},{5+\R*sin(\t-90)}) arc (\t-90:\t+90:\R);
        \draw ({5+\R*cos(\t-90)},{5+\R*sin(\t-90)}) -- ({5+\R*cos(\t+90)},{5+\R*sin(\t+90)});
                \filldraw[fill=black, draw=black] (7.2,6.5) circle (0.05cm);
        \node at (7.5, 3.5) {$\lambda_i$};
        \filldraw[fill=black, draw=black] (7.2,3.5) circle (0.05cm);
        \node at ({5+(\R+0.3)*cos(\t)}, {5+(\R+0.3)*sin(\t)}) {$\rho_i$};
        \filldraw[fill=black, draw=black] ({5+\R*cos(\t)}, {5+\R*sin(\t)}) circle (0.05cm);
        \node at ({5+1.43*\r*cos(\t/2)}, {5-1.43*\r*sin(\t/2)}) {$-\theta_i$};
        \node at ({5+1.4*\r*cos(\t/2)}, {5+1.4*\r*sin(\t/2)}) {$\theta_i$};
        \node at (2, 6) {\LARGE $\bb G_m$};
        \node at ({-\R+5.6}, 8.5) {\Large $\partial\rob$};
        \node[color = RoyalBlue] at ({5+1.07*\R}, 6) {\Large $S_i$};
        \draw
        (5,5) coordinate (O)
        ({5+cos(\t)},{5+sin(\t)}) coordinate (A)
        ({5+cos(\t+90)},{5+sin(\t+90)}) coordinate (B)
        pic [draw, thick, angle radius = 0.3cm, angle eccentricity=2mm] {right angle = A--O--B};
        \filldraw[fill=white, draw=black] (5,5) circle (0.07cm);    
    \end{tikzpicture}
    \caption{A diagram of $\rob$.}
\end{figure}

We shall need the following well-known result. 

\begin{proposition}[e.g. {\cite[\S 18]{lodayrichaud91}}]
    Let $\mfr{f} \in \bb{C}[[t]]$ be a Gevrey series of order $1$ that is a formal solution to an algebraic differential equation. Then $\mfr{f}$ is $1$-summable in all but finitely-many directions. That is, for $\theta \in \mscr{S}$ where $\mscr{S} \subset \bb{R}/2\pi\bb{Z}$ is some cofinite subset (described below), there exists some $\varepsilon > 0$ and some open $U \supset S_{\theta, \varepsilon}$ such that there is a unique continuous function $f: U \to \bb{C}$ holomorphic on $U \cap \bb{G}_m$ such that its asymptotic expansion at any $\rho \in S_{\theta, \varepsilon}$ is equal to $\mfr{f}$. Furthermore, $f$ satisfies the same differential equation as $\mfr{f}$ on $U$.
\end{proposition}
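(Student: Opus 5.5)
The plan is to reduce to the multisummability theory for nonlinear meromorphic differential equations (Braaksma, Ramis--Sibuya, as recorded in \cite[\S 18]{lodayrichaud91}), and then to use the Gevrey-$1$ hypothesis to pin down the relevant level. The uniqueness statement is handled at the end by Watson's lemma.

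\textbf{Step 1: reduction to a first-order system.} Write the algebraic differential equation as $P(t,\mfr{f},\mfr{f}',\dots,\mfr{f}^{(m)})=0$ with $P$ a polynomial. Replacing $P$ by an irreducible factor vanishing on $\mfr{f}$, we may assume $P$ is irreducible and $m$ is minimal. Then $\partial P/\partial y_m$ does not vanish identically along $\mfr{f}$. Differentiating the equation once gives
\begin{equation*}
\mfr{f}^{(m+1)}\,\frac{\partial P}{\partial y_m}(t,\mfr{f},\dots,\mfr{f}^{(m)}) + Q(t,\mfr{f},\dots,\mfr{f}^{(m)}) = 0,
\end{equation*}
which is linear in the top derivative. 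Setting $Y=(\mfr{f},\dots,\mfr{f}^{(m)})$, we obtain a system
\begin{equation*}
t^{p+1}Y' = G(t,Y),
\end{equation*}
with $G$ analytic near $(0,Y(0))$, after subtracting finitely many initial terms of $\mfr{f}$ and rescaling. The factor $t^{p+1}$ absorbs the division by $\frac{\partial P}{\partial y_m}(t,\mfr{f},\dots)$, which is a nonzero formal series of finite order $p$.

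\textbf{Step 2: multisummability and singular directions.} Braaksma's theorem gives that $Y$, and hence $\mfr{f}$, is multisummable. The levels $k_1<\dots<k_q$ are the slopes of the linearised operator $t^{p+1}\partial_t - \partial_Y G(t,Y)$. The singular directions are determined by the finitely many leading exponents of this linearisation, so there are finitely many of them. This defines $\mscr{S}$ as the complement of a finite set of directions.

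For each $\theta\in\mscr{S}$, the multisum is a holomorphic function on a sector bisected by $\theta$ of opening strictly greater than $\pi/k_1$. It has $\mfr{f}$ as Gevrey asymptotic expansion there. By analytic continuation of the identity, it satisfies $P=0$. Since the opening is open, it contains $S_{\theta,\varepsilon}$ for some $\varepsilon>0$, after translating $t=z^{-1}$ so that the directions match the convention for sectors at $\infty\in\partial\rob$. Extending by the asymptotic expansion to the boundary circle gives the continuous function $f$ on a neighbourhood $U$.

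\textbf{Step 3: uniqueness.} Two such functions differ by a function on a sector of opening $>\pi$ that is asymptotic to $0$ with Gevrey-$1$ bounds. By Watson's lemma this difference vanishes.

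\textbf{Main obstacle.} The crux is ensuring that the sum exists on a \emph{large} sector, of opening $>\pi$, rather than only on a sector of opening $\pi/k_1$. This requires $k_1\le 1$, and ideally that only level $1$ occurs so that the multisum is a genuine $1$-sum.

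I would first argue via the linearisation. The Gevrey-$1$ hypothesis, together with the Maillet--Ramis theorem, which gives the Gevrey order of solutions in terms of the slopes, forces the smallest slope that actually contributes to be at most $1$. Components of higher slope are then handled as follows: a series that is both Gevrey-$1/k$ with $k>1$ and Gevrey-$1$ can be $1$-summed along those levels via the acceleration operators.

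If that argument does not go through in full nonlinear generality, I would fall back on the setting actually used in the paper. There the equation is linear of exponential type (Assumption~$\mathbf{A}$, exponents $\lambda_i z$), all slopes equal $1$, and Step~2 directly gives $1$-summability away from the finitely many anti-Stokes directions $\arg(\lambda_i-\lambda_j)$.
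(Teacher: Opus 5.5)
The step you flag as the main obstacle fails, and it cannot be repaired: the proposition is false at the stated level of generality.

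\textbf{Counterexample.} Let $\hat g(t)=\sum_{n\ge 0}(-1)^n n!\,t^{n+1}$ be Euler's series and put $\mfr f(t)=\hat g(t^2)=\sum_{n\ge 0}(-1)^n n!\,t^{2n+2}$. It satisfies the linear equation $t^3\mfr f'+2\mfr f=2t^2$. It is Gevrey of order $1/2$, hence of order $1$. Its level-$1$ Borel transform is
\[
\sum_{n}(-1)^n n!\,\frac{\zeta^{2n+1}}{(2n+1)!}=2e^{-\zeta^2/4}\int_0^{\zeta/2}e^{s^2}\diff s .
\]
This is an entire function of size $\sim\sqrt{\pi}\,e^{-\mathrm{Re}(\zeta^2)/4}$ along every ray with $\cos(2\arg\zeta)<0$. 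Hence $\mfr f$ is not $1$-summable in any direction of the arcs $[\pi/4,3\pi/4]$ and $[5\pi/4,7\pi/4]$. One can check directly that no solution of the equation on such a large sector has asymptotic expansion $\mfr f$. For an exactly Gevrey-$1$ example, take $\hat g(t)+\hat g(t^2)$. It is D-finite and is not $1$-summable in direction $\pi/2$, because $\hat g(t)$ is $1$-summable there and $\hat g(t^2)$ is not.

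\textbf{Where your reasoning goes wrong.} A multisum with levels $k_1<\dots<k_q$ lives on a sector of opening $>\pi/k_q$, governed by the \emph{largest} level, not $>\pi/k_1$ as in your Step 2. So the relevant condition is not $k_1\le 1$. The Gevrey-$1$ hypothesis is only an upper bound on coefficient growth and does nothing to exclude levels $>1$. Acceleration operators compute the multisum but do not enlarge the sector on which it lives. In Step 3 you are right to use Gevrey-$1$ bounds. With mere Poincar\'e asymptotics, as in the literal statement, uniqueness fails: for example $e^{-t^{-1/2}}$ is flat on sectors of opening slightly larger than $\pi$.

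\textbf{Your fallback is the correct statement.} It is also what the paper actually uses. The paper gives no argument of its own, only the citation \cite[\S 18]{lodayrichaud91}, and it applies the result to the series $\mfr f_j(z^{-1})$ of \eqref{eqn: basis at infty} under Assumption $\mathbf{A}$. There the exponential factors are $e^{\lambda_i z}$, so $1$ is the only positive level. The linear theory then gives $1$-summability away from finitely many singular directions. Two points should be made explicit.
\begin{enumerate}
\item The $\mfr f_j$ are entries of a formal gauge transformation to the normal form, not formal solutions of the operator itself. They are formal horizontal sections of $\mathrm{Hom}(\text{normal form},\mcal E)$, whose exponential factors are $e^{(\lambda_i-\lambda_j)z}$. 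This system again has $1$ as its only positive level, so the theorem applies to it.
\item In the $z$-variable the singular directions are $\arg(\bar\lambda_i-\bar\lambda_j)$, i.e.\ the set $\Sigma$ of Remark \ref{Rem: Stokes phenom}. Your $\arg(\lambda_i-\lambda_j)$ is the same set read in $t=z^{-1}$.
\end{enumerate}
With the statement restricted to this single-level linear setting, your Steps 2 and 3 go through, and the nonlinear reduction of Step 1 and the appeal to Braaksma become unnecessary.
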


\begin{remark}
    If $\mfr{f}$ does not converge, the analytic continuation of $f$ must have non-trivial monodromy. For example, the Laplace transform of $\frac{1}{1+u}$ on the sector centred on the ray $\arg z = \theta$, $\theta\neq \pi$, is given by
    \begin{equation}
        f_\theta(t) = \frac{1}{t} \int_0^{\infty e^{i\theta}} \frac{1}{1+u}e^{-\frac{u}{t}} \diff u.
    \end{equation}
    This has the asymptotic expansion $\sum_{n \geq 0} (-1)^n n! t^n$ along paths approaching $t=0$ inside $\{t \in \bb C \big|\,\text{Re}\, te^{-i\theta} > 0\}$. Furthermore, for small changes $\theta \to \theta'$, in the intersection of their domains of validity, $f_\theta(t) = f_{\theta'}(t)$. Thus varying $\theta$, it defines a function on $\bb{C} \setminus \bb{R}_{\leq 0}$. However, it picks up an extra term when crossing $\theta = \pi$:
    \begin{equation}
        f(e^{2\pi i}t) = f(t) + \frac{1}{t}e^{\frac{1}{t}}
    \end{equation}
    where $f(e^{2\pi i}t)$ denotes the analytic continuation of $f$ obtained by circling the origin once in the anticlockwise direction. The extra contribution comes from a contour integral around $u = -1$.
\end{remark}

\begin{remark}[Stokes phenomenon] \label{Rem: Stokes phenom}
    For $\mcal{E}$ as above, the directions at infinity in which $\mfr{f}_i$ are not 1-summable are called \emph{anti-Stokes lines}, and it is at these lines that the asymptotic behaviour of a germ of a horizontal section of $\mcal{E}$ may change. More precisely, consider a horizontal section of the form $f(z)e^{\lambda_iz}$, $f$ analytic and of moderate growth on some sector centred on a point $\sigma_1 \in \partial \rob$ just to the side of an anti-Stokes line $\bb{R}_{>0}e^{i\theta}$, $\theta \in \bb{R}/2\pi i\bb{Z}$. We may consider its analytic continuation to a point $\sigma_2$ the other side of the anti-Stokes line. Then its asymptotic expansion on a large sector centred on $\sigma_2$ may differ from that at $\sigma_1$, i.e. it may be of the form  
    \begin{equation}
        g(z)e^{\lambda_iz} + \sum_{j \neq i}h_j(z)e^{\lambda_jz} = e^{\lambda_iz}\left(g(z) + \sum_{j \neq i} h_j(z)e^{(\lambda_j-\lambda_i)z}\right)
    \end{equation}
    for some $g, h_j$ analytic and of moderate growth on the second sector. However, on the overlap, these expansions must agree since they define the same function. By bringing $\sigma_1$ and $\sigma_2$ arbitrarily close to the anti-Stokes line, we can make the opening of the overlap of these sectors arbitrarily close to $\pi$. Then the equality
    \begin{equation}
        f(z) - g(z) = \sum_{j \neq i} h_j(z)e^{(\lambda_j - \lambda_i)z}
    \end{equation}
    holds on these overlaps. Since $f, g, h_j$ are of moderate growth on an open sector of opening $\pi$ centred on the anti-Stokes line, $e^{(\lambda_j - \lambda_i)z}$ must be as well for each $j$ such that $h_j \neq 0$. But this means that $(\lambda_j - \lambda_i)e^{i\theta} \in \bb{R}_{<0} \iff \theta = \pi - \arg{(\lambda_j - \lambda_i)} = \arg{(\bar{\lambda}_i - \bar{\lambda}_j)}$. Therefore, the set
    \begin{equation}
        \Sigma := \{\arg(\bar{\lambda}_i - \bar{\lambda}_j) \vert i, j \in I, i \neq j\} \subset \bb{R}/2\pi \bb{Z}
    \end{equation}
    contains all anti-Stokes directions of $\mcal{E}$. In words: when crossing the line $\theta = \arg(\bar{\lambda}_i - \bar{\lambda}_j)$, a term $\sim e^{\lambda_i z}$ may `pick up' a term $\sim e^{\lambda_j z}$.
\end{remark}

\begin{proposition}[Stokes decomposition, {\cite[\S 12]{malgrange91}}]
    Let $\theta \in (\bb{R}/2\pi i\bb{Z}) \setminus\Sigma$ be a non-anti-Stokes direction and set $\sigma = e^{i \theta} \cdot \infty$. Then there exists a unique direct sum decomposition of the stalk $L_\sigma$ of horizontal sections at $\sigma$ into vector spaces $\text{St}^i_\sigma$, $i \in I$, given by
    \begin{equation}
        L_\sigma = \oplus_{i \in I} \text{St}^i_\sigma
    \end{equation}
    such that each element of $\text{St}^i_\sigma$ can be written as $g(z)e^{\lambda_iz}$ on a large sector centred on $\sigma$, where $g$ is analytic and of moderate growth on the sector. Furthermore, $\dim \text{St}^i_\sigma = r_i = \text{\emph{rk}}\, R_i$ (see \eqref{eqn: general formal decomp}). This is the \emph{Stokes decomposition}.
\end{proposition}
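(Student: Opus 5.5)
We prove existence from the formal basis \eqref{eqn: basis at infty} together with multisummation (the $1$-summability proposition above), and uniqueness from a growth-comparison argument on the overlap of sectors. Throughout, we work with a fixed non-anti-Stokes direction $\theta \notin \Sigma$ and set $\sigma = e^{i\theta}\cdot\infty$.

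\emph{Existence.} Since $\theta \notin \Sigma$ and $\Sigma$ is finite, there is some $\varepsilon > 0$ such that the large sector $S_{\theta,\varepsilon}$ contains no anti-Stokes direction in its interior other than possibly at its boundary; shrinking $\varepsilon$ if necessary, we may assume every formal series $\mfr f_j$ is $1$-summable in the direction $\theta$. The proposition quoted from \cite{lodayrichaud91} then gives actual analytic functions $f_j$ on a neighbourhood of $S_{\theta,\varepsilon}$ with asymptotic expansion $\mfr f_j$. Substituting these sums into \eqref{eqn: basis at infty}, we obtain the row vector
\begin{equation*}
(f_1,\dots,f_n)\, e^{\Delta z}(z^{-1})^{\Gamma_\infty}.
\end{equation*}
Summation is a morphism of differential algebras, so this vector is still a fundamental system of solutions on the sector. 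We now group its columns according to the blocks of $\Delta$ belonging to each $\lambda_i$ and define $\text{St}^i_\sigma$ to be the span of the corresponding $r_i$ entries. Each element of $\text{St}^i_\sigma$ has the form $g(z)e^{\lambda_i z}$ with $g$ of moderate growth. This holds because $\Delta$ and $\Gamma_\infty$ commute, so $e^{\Delta z}$ acts on the $\lambda_i$-block as the scalar $e^{\lambda_i z}$, while the sums $f_j$ and the factor $(z^{-1})^{\Gamma_\infty}$ (a product of rational powers and logarithms) are both of moderate growth. The entries form a basis, which gives the direct sum decomposition together with $\dim \text{St}^i_\sigma = r_i$.

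\emph{Uniqueness.} Suppose we are given two decompositions. Then some element of one summand satisfies a relation of the form $g e^{\lambda_i z} = \sum_j h_j e^{\lambda_j z}$ on a large sector centred on $\sigma$, with each $h_j$ of moderate growth. It therefore suffices to prove the following claim: if $\sum_j h_j e^{\lambda_j z} = 0$ on a sector of opening greater than $\pi$ centred on a non-anti-Stokes direction, with each $h_j$ of moderate growth, then every $h_j$ vanishes. We argue as in Remark \ref{Rem: Stokes phenom}. The relevant directions are the rays where $\text{Re}((\lambda_j-\lambda_k)z) = 0$; on a sector of opening greater than $\pi$ containing no point of $\Sigma$, we can order the exponentials by dominance. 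Take $j$ for which $e^{\lambda_j z}$ is maximally dominant along some ray in the sector. Dividing the relation by $e^{\lambda_j z}$ shows that $h_j$ is exponentially small there, and combined with moderate growth on a sector of opening greater than $\pi$ this forces $h_j = 0$ by Phragmén–Lindelöf (Watson's lemma). Induction on the number of exponentials then finishes the claim.

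We expect this uniqueness step to be the main obstacle. The key point is to ensure that the excluded set $\Sigma$ exactly guarantees that, on a sector of opening greater than $\pi$ around $\sigma$, each difference $e^{(\lambda_j-\lambda_k)z}$ is exponentially large somewhere in the sector. This is precisely the computation carried out in Remark \ref{Rem: Stokes phenom}.
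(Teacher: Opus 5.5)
The paper does not prove this; it only cites Malgrange. Your route is the standard one: $1$-sum the formal fundamental solution in the direction $\theta$ for existence, and show the $e^{\lambda_i z}$ are independent over moderate-growth functions on sectors of opening $>\pi$ for uniqueness. The skeleton is sound. However, two of your assertions about $\Sigma$ are false, and they place the hypothesis $\theta\notin\Sigma$ in the wrong part of the proof.

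Since $\arg(\bar\lambda_j-\bar\lambda_i)=\arg(\bar\lambda_i-\bar\lambda_j)+\pi$, the set $\Sigma$ is invariant under $\phi\mapsto\phi+\pi$. An open arc of length $>\pi$ contains at least one of $\phi,\phi+\pi$ for every $\phi$. So once $\lvert I\rvert\ge 2$, every $S_{\theta,\varepsilon}$ with $\varepsilon>0$ meets $\Sigma$. Two consequences:
\begin{itemize}
\item In the existence step, you cannot choose $\varepsilon$ so that the large sector avoids points of $\Sigma$.
\item In the uniqueness step, ``a sector of opening greater than $\pi$ containing no point of $\Sigma$'' does not exist.
\end{itemize}
Neither claim is actually needed. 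For existence, $\theta\notin\Sigma$ implies that every $\mathfrak f_j$ is $1$-summable in the single direction $\theta$. The quoted summability proposition then gives the sums directly on a neighbourhood of some $S_{\theta,\varepsilon}$. That sector will in general contain anti-Stokes directions, and the sum is perfectly well defined across them. The rest of your existence step is fine.

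For uniqueness, your closing paragraph misidentifies the role of $\Sigma$. For every $\mu\neq 0$, any sector of opening $>\pi$ contains rays where $e^{\mu z}$ grows exponentially and rays where it decays, whatever $\Sigma$ is. Your actual argument runs as follows: pick a ray on which one exponential strictly dominates (all but finitely many rays qualify, so take one near the centre), deduce $h_j=O(e^{-c\lvert z\rvert})$ along it, show $h_j=0$, and induct. This never uses $\theta\notin\Sigma$, and it proves independence on every sector of opening $>\pi$. The hypothesis is needed only for existence.

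The one step that deserves a real argument is passing from decay along a single ray to $h_j=0$. Watson's lemma needs exponential flatness on a whole sector of opening $>\pi$, not just on one ray. You get this from Phragm\'en--Lindel\"of applied on the two subsectors of opening $<\pi$ on either side of the ray. For instance, suppose $h_j=O(e^{-c\lvert z\rvert})$ on $\arg z=0$ and $h_j$ has moderate growth on $0\le\arg z\le\alpha$ with $\alpha<\pi$. Then $h_j(z)\exp\bigl(ice^{-i\alpha}z/\sin\alpha\bigr)$ is polynomially bounded on both edges and has order $1<\pi/\alpha$. Hence it is polynomially bounded inside, which gives $\lvert h_j(z)\rvert\le C(1+\lvert z\rvert)^N e^{-c\lvert z\rvert\sin(\alpha-\arg z)/\sin\alpha}$. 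Doing this on both sides of the ray yields flatness on a closed sector of opening $>\pi$, and then Watson's lemma applies. With these corrections your proof is complete.
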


\subsection{A simpler way to compute RD cohomology}

We now wish to investigate the rapid decay cohomology of the pullback connection $f^*\mcal{E}$, where $f: X \to \bb G_m$ is a regular, non-vanishing function and $\mcal{E} \in \modd{}{\D{\bb G_m}}$ satisfies $\mbf{A}$ as above. We construct in several steps a more tangible sheaf that also computes rapid decay cohomology of the triple $(X, Y, f^*\mcal{E})$, where $Y \subset X$ is a closed subvariety. Put simply, we reduce rapid decay cohomology in this setting to relative cohomology with coefficients in a local system. This culminates in Theorem \ref{Thm: L* calcs RDC}; the key result that gives us rational structures (Thm. \ref{Thm: main result}) and is also applied in Section \ref{Sec: comp to perv} to eventually prove the basic lemma. \par

In this section, we fix an $\mcal{E} \in \modd{}{\D{\bb{G}_m}}$ satisfying $\mbf{A}$. Following the notation of \S\ref{Subsec: RDC}, we set $L$ to be the local system of horizontal sections of $\mcal{E}$ on $\bb G_m$, or by abuse of notation its direct image under inclusion into either $\rob$ or $\text{OBl}_{\{0, \infty\}}\bb P^1$. \par

\subsubsection{Preliminaries} \label{Sec: prelim}

The expert and the uninterested reader may look at only \eqref{eqn: Xtilde defn}, the statement of Lemma \ref{Lem: RDC from pullback} and skip to Section \ref{Sec: results}. \par

First, we note that as in the twisted case \cite{CHH20, FJexp}, we do not have to use the `full' blow-up; we can safely discard parts of it without disturbing the cohomology groups of our sheaves. Take a compactification $\overline{X}_c \supset X$ with $D:=\overline{X}_c \setminus X$ a divisor with simple normal crossings such that $f$ extends to a map {$\bar{f}_c: \overline{X}_c \to \bb{P}^1$}. Let $(\widetilde{X}_c,\, \pi: \widetilde{X}_c \to \overline{X}_c)$ be the oriented real blow-up of $\widetilde{X}_c$ along $D$. There is a natural map $\tilde f_c: \widetilde{X}_c \to \text{OBl}_{\{0, \infty \}}\bb P^1$ lifting $\bar f_c$ \cite[Lemma 4.4]{CHH20}. We decompose $D = P \cup H$ where $P = \bar f_c^{-1}(\infty)$ and $H$ are both divisors with simple normal crossings. \par

The horizontal sections of $f^*\mcal E$ are given by $f^{-1}L$. Considering them as holomorphic functions, a horizontal section $h$ of $\mcal E$ gets pulled back to $h \circ f$. In particular, $h \circ f$ has moderate growth on an open set $V \subset \widetilde{X}_c$ if and only if $h$ has moderate growth on some open set containing $f(V) \subset \text{OBl}_{\{0, \infty\}}\bb P^1$. We can thus identify
\begin{equation}
    H^i_{rd}(X, f^*\mcal E) = H^i(\widetilde{X}_c, \tilde f_c^{-1}L^{mod}),
\end{equation}
where $L^{mod} \subset L$ is the subsheaf of horizontal sections of moderate growth on the boundary.

\vspace{10pt}
\begin{aside}
    If $\mcal E$ were a connection on $\bb A^1$, for example $\mcal O_{\bb A^1}e^z$, and $f$ were a morphism landing in $\bb A^1$, an equation of this form would still hold. This shows the equivalence between Hien's definition of rapid decay (co)homology in the twisted setting and the elementary definition via relative (co)homology \cite[Prop. 3.5.2]{FJexp}.
\end{aside}
\vspace{10pt}

Using a similar approach as in \cite[Defn. 6.3]{CHH20}, we set
\begin{equation} \label{eqn: Xtilde defn}
    \widetilde{X} := \widetilde{X}_c \setminus \pi^{-1}(H).
\end{equation}
By the Collar Neighbourhood Theorem \cite[Thm. 3.4.6]{FJexp}, the immersion $\tilde{\jmath}: \widetilde{X} \to \widetilde{X}_c$ is a homotopy equivalence. The regularity of $\mcal{E}$ at $0\in \bb{P}^1$ means precisely that $L^{mod}\vert_U = L\vert_U$ on a neighbourhood $U$ of the circle at $0$ in $\text{OBl}_{\{0,\infty\}}\bb{P}^1$, in particular, $L^{mod}\vert_U$ is a local system. Putting these facts together, and noting that $\bar f^{-1}(0) \subset H$, one can verify that the cohomology groups of $\tilde f_c^{-1}L^{mod}$ and $\tilde f^{-1}_cL^{mod}\vert_{\widetilde{X}}$ coincide:
\begin{equation}
    H^i(\widetilde{X}, \tilde f^{-1}_cL^{mod}\vert_{\widetilde{X}}) = H^i(\widetilde{X}_c, \tilde f^{-1}L^{mod}).
\end{equation}
The circle at $0$ does not lie in the image of $\widetilde{X}$ under $\tilde f_c$, thus we may set
\begin{equation} \label{eqn: tilde f}
    \tilde{f} := \tilde f_c\vert_{\widetilde{X}}: \widetilde{X} \to \rob.
\end{equation}
For a closed subvariety $Y \subset X$, we set $\widetilde Y$ (resp. $\widetilde{Y}_c$) to be its closure inside $\widetilde{X}$ (resp. $\widetilde{X}_c$). Then the inclusion $\widetilde Y \to \widetilde{Y}_c$ is also a homotopy equivalence. We summarise this discussion in the following.

\begin{lemma} \label{Lem: RDC from pullback}
Let $X$, $Y$ and $\mcal E$ be as above and we keep the notation of this subsection. Then $\tilde f^{-1}L^{mod}$ calculates rapid decay cohomology, that is,
\begin{equation}
    H^i_{rd}(X, Y, f^*\mcal E) \simeq H^i(\widetilde{X}, \widetilde{Y}, \tilde f^{-1}L^{mod}).
\end{equation}    
\end{lemma}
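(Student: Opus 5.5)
The plan is to chain three identifications, tracking the relative (subspace) version throughout. First, I unwind Definition \ref{Defn: relative RDC}. Choose the compactification $\overline{X}_c$ to be good with respect to $f^*\mcal E$; by Hien's independence result this costs nothing, and after further blow-ups along the boundary we may also assume $f$ extends to $\bar f_c$ and $D = P\cup H$ as above. By Hien's Proposition 1 the moderate de Rham complex is quasi-isomorphic to $L_{f^*\mcal E}^{mod}$. Hence $H^i_{rd}(X,Y,f^*\mcal E) \simeq H^i(\widetilde X_c, \widetilde Y_c, L^{mod}_{f^*\mcal E})$, where the relative group is computed by the sheaf $(\cdot)_{[\widetilde X_c,\widetilde Y_c]}$ of \eqref{eqn: [Z, A]}.

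Second, I identify $L^{mod}_{f^*\mcal E}$ with $\tilde f_c^{-1}L^{mod}$ as subsheaves of $\tilde\jmath_* f^{-1}L$. The local sections of $f^{-1}L$ are $h\circ f$ with $h$ horizontal for $\mcal E$. I must show that $h\circ f$ has moderate growth near a boundary point $x$ if and only if $h$ has moderate growth near $\tilde f_c(x)$. The "if" direction holds because $\bar f_c$ is algebraic, so near $\pi^{-1}(P)$ the function $|f|$ is bounded by a power of $|$local coordinates$|^{-1}$; moderate growth in $z$ therefore gives moderate growth upstairs. The same bound handles $H$, where $f$ stays in a region on which $h$ is moderate. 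The "only if" direction is the main obstacle. I would first try the Stokes decomposition on a large sector around a non-anti-Stokes direction $\sigma = \tilde f_c(x)$, writing $h=\sum_i g_i e^{\lambda_i z}$. Locally $f = u\,t^{-m}$ with $u$ a unit (monomial in normal crossing coordinates), so $\tilde f_c$ is open near $x$ in the angular direction. Exponential growth of some $e^{\lambda_i z}$ along a direction in the image then forces exponential growth of $h\circ f$ along a preimage path. Anti-Stokes directions can be treated by continuity, since moderacy is an open condition on the sector side.

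Third, I pass from $\widetilde X_c$ to $\widetilde X = \widetilde X_c\setminus\pi^{-1}(H)$. Let $U$ be a neighbourhood of the circle at $0$ on which $L^{mod}=L$. Over $\pi^{-1}(H)$ the sheaf $\tilde f_c^{-1}L^{mod}$ is then locally constant: near $\bar f_c^{-1}(0)$ this uses regularity at $0$, and at the other points of $H$ the map $f$ lands in $\bb G_m$. By the collar neighbourhood theorem, $\widetilde X\hookrightarrow\widetilde X_c$ and $\widetilde Y\hookrightarrow \widetilde Y_c$ are homotopy equivalences, compatible with a collar along $\pi^{-1}(H)$ on which the sheaf is pulled back from the interior. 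A collar retraction therefore shows that restriction induces isomorphisms on absolute cohomology of $X$ and of $Y$. The five lemma, applied to the long exact sequences of the pairs, gives the relative statement. Finally, $\tilde f_c$ restricted to $\widetilde X$ lands in $\rob$, so the restricted sheaf is $\tilde f^{-1}L^{mod}$, as in \eqref{eqn: tilde f}.
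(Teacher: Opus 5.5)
Your three steps are the paper's: Hien's $\text{DR}^{mod}\simeq L^{mod}$, then identifying the moderate sections of $f^*\mcal E$ with $\tilde f_c^{-1}L^{mod}$, then discarding $\pi^{-1}(H)$ via the collar theorem. Your second step is a sound, more detailed version of what the paper only asserts. The gap is in the third step.

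You claim that $\tilde f_c^{-1}L^{mod}$ is locally constant along $\pi^{-1}(H)$, because away from $\bar f_c^{-1}(0)$ the map $f$ lands in $\bb G_m$. This fails at points of $\pi^{-1}(H\cap P)$, and $H$ generally meets $P$. For example, take $X=\bb G_m^2$, $f(x,y)=x$ and $\overline X_c=\bb P^1\times\bb P^1$: the component $\bb P^1\times\{0\}$ of $H$ meets $P=\{\infty\}\times\bb P^1$. Over such corners $\tilde f_c$ takes values on the circle at infinity. There $L^{mod}$ is only constructible: on $\bigcup_i\overline S_i$ its stalks are proper subspaces of those of $L$, and they jump at the $\partial S_i$. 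So your argument does not produce a collar along which the sheaf is pulled back from the interior. These corners are exactly what the paper's ``one can verify'' has to cover.

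What is missing is a local product structure near the corners. Since $\bar f_c^{-1}(0)\cap P=\emptyset$, near $x\in H\cap P$ you can write $\bar f_c=u\,t_1^{-m_1}\cdots t_a^{-m_a}$, where $u$ is a unit and only the $P$-coordinates $t_j$ appear. Replacing $t_1$ by $t_1u^{-1/m_1}$, which cuts out the same component, you may take $u=1$. In polar coordinates $\tilde f_c$ then does not involve the polar coordinates $(r_k,\psi_k)$ of the $H$-coordinates $s_k$ at all. Hence $\mscr F:=\tilde f_c^{-1}L^{mod}$ is, on small product charts, pulled back along the projection forgetting them.

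It follows that $\mscr F\to R\tilde\jmath_*\tilde\jmath^{-1}\mscr F$, with $\tilde\jmath:\widetilde X\hookrightarrow\widetilde X_c$, is an isomorphism at these points. At the other points of $\pi^{-1}(H)$ your local-constancy argument does work. This local statement gives $H^i(\widetilde X_c,\mscr F)\simeq H^i(\widetilde X,\mscr F)$ without needing any global collar adapted to the sheaf.

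The same local check is needed for $\mscr F_{[\widetilde X_c,\widetilde Y_c]}$. It goes through once $\overline Y_c\cup D$ has normal crossings, which you can reach by resolution and excision as in the proof of Theorem \ref{Thm: L* calcs RDC}. For general $Y$, a collar of $\widetilde X_c$ need not preserve $\widetilde Y_c$. A bare homotopy equivalence $\widetilde Y\simeq\widetilde Y_c$ does not give a cohomology isomorphism for the non-locally-constant sheaf $\mscr F\vert_{\widetilde Y_c}$; the paper also only asserts this step.
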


\subsubsection{Results} \label{Sec: results}

We now begin the work that provides the foundations for the main results of this paper. From now on, $L^{mod}$ is considered as a sheaf on $\rob$. 

\begin{lemma}\label{Lem: L^> isom}
There is an isomorphism
\begin{equation}
    \text{\emph{coker}} (L^{mod} \hookrightarrow L) \simeq \bigoplus_i (\iota_{\overline{S}_i})_*\bb{C}^{r_i}
\end{equation}
where $\iota_{\overline{S}_i}: \overline{S}_i \hookrightarrow \rob$ is the inclusion.
\end{lemma}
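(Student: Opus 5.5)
The plan is to verify the isomorphism stalkwise and then check that the stalkwise identifications glue. On the interior $\bb G_m\subset\rob$ both $L^{mod}$ and $L$ coincide, since the moderate growth condition only concerns the boundary $\partial\rob$. The cokernel is therefore supported on the circle at infinity, and it suffices to describe $L/L^{mod}$ near each point $\sigma\in\partial\rob$.

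First I would treat a non-anti-Stokes direction $\sigma = e^{i\theta}\cdot\infty$, $\theta\notin\Sigma$. By the Stokes decomposition (Proposition of \cite[\S 12]{malgrange91}), $L_\sigma=\bigoplus_i \text{St}^i_\sigma$, where an element of $\text{St}^i_\sigma$ has the form $g(z)e^{\lambda_i z}$ on a large sector centred on $\sigma$, with $g$ of moderate growth. The key step is to show that
\[
L^{mod}_\sigma=\bigoplus_{i:\,\sigma\notin \overline{S}_i}\text{St}^i_\sigma.
\]
The inclusion "$\supseteq$" is clear: if $\sigma\notin\overline S_i$, then $\text{Re}(\lambda_i z)<0$ on a neighbourhood of $\sigma$, so $g e^{\lambda_i z}$ decays there. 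For "$\subseteq$", take $h=\sum_i g_i e^{\lambda_i z}$ of moderate growth near $\sigma$. Among the indices with $\sigma\in\overline S_i$ and $g_i\neq0$, I would pick the one whose exponential dominates along a suitable ray near $\sigma$. Using the fact that the $\lambda_i$ are distinct and the $g_i$ have nonzero asymptotic expansions (they are nonzero Gevrey-type, hence asymptotically like $c z^{a}(\log z)^m$), I would argue that this term cannot be cancelled by the others and so grows exponentially. This contradicts moderate growth.

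The delicate points are, first, the boundary case $\sigma\in\partial S_i$, where $\text{Re}(\lambda_i z)$ is $0$ along $\sigma$ but positive just to one side. Since $L^{mod}$ is defined by moderate growth on a neighbourhood of $\sigma$ in $\rob$, such a section still fails to be moderate, which is why the closed sector $\overline S_i$ appears. Second, any open neighbourhood of $\sigma$ is a small sector, so it contains a direction in which each relevant exponential strictly grows; this makes the dominance argument go through. Granting this, the stalk of the cokernel at $\sigma$ is $\bigoplus_{i:\sigma\in\overline S_i}\text{St}^i_\sigma\simeq\bigoplus_{i:\sigma\in\overline S_i}\bb C^{r_i}$.

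Next come the anti-Stokes directions, which form a finite set. There I would use that for a non-anti-Stokes direction $\sigma'$ close to $\sigma$, the relevant $\text{St}^{i}$ spaces are filtered compatibly: the subspace of $L_\sigma$ of sections of growth at most $e^{\lambda_i z}$ is well defined. The condition defining $L^{mod}$ at $\sigma$ ("no growing exponentials") is an intersection of open conditions. One therefore still gets $\dim L_\sigma/L^{mod}_\sigma=\sum_{i:\sigma\in\overline S_i}r_i$, with quotient naturally isomorphic to the sum of the graded pieces. I expect this step to be the main obstacle.

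Finally comes the gluing. On each $\overline S_i$ the piece $\text{St}^i$ modulo lower terms is canonically the graded piece $\text{gr}_{\lambda_i}L$, which is a local system of rank $r_i$ on the contractible arc $\overline S_i$ (Stokes jumps only add terms of smaller exponential growth). It is therefore constant, $\simeq\bb C^{r_i}$. This yields a surjection $L\to\bigoplus_i(\iota_{\overline S_i})_*\bb C^{r_i}$ with kernel $L^{mod}$, checked on stalks by the computations above.
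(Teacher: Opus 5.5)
Your stalk computation is sound. The anti-Stokes case also needs no filtration argument: the Stokes decomposition at a nearby non-anti-Stokes $\sigma'$ holds on a large sector containing $\sigma$ in its interior, so your dominance argument applies verbatim at $\sigma$.

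The genuine gap is the gluing. Stalks do not determine a sheaf on the circle. For $a<b<c<d$ on an arc, $(\iota_{[a,c]})_*\bb C\oplus(\iota_{[b,d]})_*\bb C$ and $(\iota_{[a,d]})_*\bb C\oplus(\iota_{[b,c]})_*\bb C$ have isomorphic stalks everywhere but are not isomorphic. So the content of the lemma is producing a global morphism $L\to\bigoplus_i(\iota_{\overline S_i})_*\bb C^{r_i}$, and your last paragraph does not produce one.

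The graded piece $\text{gr}_{\lambda_i}L=L_{\le\lambda_i}/L_{<\lambda_i}$ is a subquotient of $L$, not a quotient. On $\overline S_i$ the exponential $e^{\lambda_i z}$ is in general not dominant, so $L_{\le\lambda_i}\neq L$ there and $L$ does not map to $\text{gr}_{\lambda_i}L$ for free. The natural candidate is the projection $L_\sigma\to\text{St}^i_\sigma$ along $\bigoplus_{j\neq i}\text{St}^j_\sigma$, but it is not compatible with parallel transport. When $\sigma$ crosses a direction $\arg(\bar\lambda_j-\bar\lambda_i)$ lying in $\overline S_i$, the summand $\text{St}^j$ can pick up $e^{\lambda_i z}$-terms, and the projection jumps. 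For $\lambda_1=1$, $\lambda_2=2$ this happens at $\theta=0$, the centre of $\overline S_1$. So ``canonically'' is false, and ``Stokes jumps only add recessive terms'' has to be used to prove properties of one specific map.

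The paper fixes one choice. It takes the projection along the Stokes decomposition at the centre $\rho_i$ (or at a nearby non-anti-Stokes $\rho_i'$) and extends it to $p_i\colon L\to(\iota_{\overline S_i})_*\bb C^{r_i}$ by parallel transport over the contractible arc $\overline S_i$. It then proves two things about $P=\bigoplus_i p_i$:
\begin{itemize}
\item $L^{mod}\subseteq\ker P$: the decomposition at $\rho_i$ is valid near every $\sigma\in\overline S_i$, so a germ with nonzero $\text{St}^i_{\rho_i}$-component grows like $e^{\lambda_i z}$ there.
\item $P$ is surjective: it lifts into $\text{St}^i_{\rho_i}$ and tracks which exponentials can be picked up on the way to $\sigma$ and on to $\rho_*$. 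This shows the lift has zero $\text{St}^*_{\rho_*}$-component for every other $*$ with $\sigma\in\overline S_*$.
\end{itemize}
Your computations use the decomposition at $\sigma$ rather than at the $\rho_i$, so they verify neither property. They do complement the paper, though. Once $P$ is surjective with $L^{mod}\subseteq\ker P$, your formula for $L^{mod}_\sigma$ gives $\dim\ker P_\sigma=\dim L^{mod}_\sigma$, hence $\ker P=L^{mod}$, a step the paper leaves implicit.
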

\begin{proof}
At each non-anti Stokes direction $\sigma \in \partial\rob$, we have the Stokes decomposition
\begin{equation} \label{eqn: Stokes decomp}
    L_\sigma = \oplus_{i \in I} \text{St}^i_\sigma
\end{equation}
where $\text{St}^i_\sigma$ consists of sections of $L$ whose analytic continuations to the sector $S_{\arg \sigma, \varepsilon}$ for some $\varepsilon > 0$ are of asymptotic growth $\sim e^{\lambda_i z}$. If $\iota: Y \to X$ is a closed immersion of topological spaces, $\mscr{F}$ a local system of $\bb{C}$-vector spaces on $X$, then a $\bb C$-linear morphism $\mscr{F} \to \iota_*\bb{C}$ is the datum of a $\bb{C}$-linear map $\Gamma(Y, \mscr{F}) \to \bb{C}$. We identify by parallel transport $L_{\rho_i} \equiv \Gamma(S_i, L)$. Assuming $\rho_i$ is not an anti-Stokes direction, we then define a morphism
\begin{equation}
    p_i: L \to (\iota_{\overline{S}_i})_*\bb{C}^{r_i}
\end{equation}
by sending each summand $\text{St}^j_{\rho_i}$, $j \neq i$ to zero and mapping $\text{St}^i_{\rho_i}$ isomorphically onto $\bb{C}^{r_i}$. If $\theta_i$ happens to be an anti-Stokes direction, choose a point $\rho_i'$ on $\partial\rob$ very close to $\rho_i$ and apply the same procedure with the Stokes decomposition at $\rho_i'$. This is dependent on which side of $\rho_i$ the chosen point $\rho_i'$ lies, but in an inconsequential way: by the discussion in Remark \ref{Rem: Stokes phenom}, carrying some $\varphi \in \text{St}^i_{\rho_i'}$ across the Stokes line gives something of the form
\begin{equation}
    f(z)e^{\lambda_iz} + \sum_l g_l(z) e^{\lambda_i(1-\alpha_l)z} = e^{\lambda_iz}\left(f(z) + \sum_l g_l(z)e^{-\alpha_l\lambda_iz}\right)
\end{equation}
with $\alpha_l > 0$ and $f, g_l$ analytic functions of moderate growth on $S_i$. Thus on the whole sector $S_i$, the function has asymptotic growth $\sim e^{\lambda_iz}$, which means the function has moderate growth nowhere on the sector. We have therefore constructed a map
\begin{equation} \label{eqn: surjection from L}
    P := \bigoplus_i p_i: L \to \bigoplus_{i \in I}(\iota_{\overline{S}_i})_*\bb{C}^{r_i}
\end{equation}
The subsheaf $L^{mod}$ lies in the kernel of $P$ since if a stalk of $L^{mod}$ had non-zero projection to some $\text{St}^i_{\rho_i}$ at a point $\sigma \in S_i$, it would have growth at least $\sim e^{\rho_i z}$, which is not moderate at any point in $S_i$. It remains to prove that $P$ is surjective. Take some $\sigma \in \partial \rob$ and some germ
\begin{equation}
    s \in ((i_{\overline {S_i}})_*\bb C^{r_i})_\sigma \subset \left(\bigoplus_{i \in I}(\iota_{\overline{S}_i})_*\bb{C}^{r_i}\right)_\sigma.
\end{equation}
We may consider $s$ as a germ at $\rho_i$ via parallel transport along a path in $\overline{S}_i$. Since $p_i\vert_{\rho_i}: L_{\rho_i} \to \bb C^{r_i}$ is a projection onto a direct summand, it admits a canonical section. We use this to lift $s$ to a function $\varphi \in \text{St}^i_{\rho_i} \subset L_{\rho_i}$, which we then parallel transport back to $\sigma$ along a path in $\overline{S}_i$. \par
We claim that $P\vert_\sigma(\varphi) = s$. To prove this, we must check that for any $*$ with $\overline{S_*} \ni \sigma$, the parallel transport of $\varphi$ to $\rho_*$ along a path in $\overline{S_*}$ has zero projection onto $\text{St}^*_{\rho_*}$. This can be checked by using Remark \ref{Rem: Stokes phenom} and considering the growth of terms that might be `picked up' as we analytically continue from $\rho_i$ to $\rho_*$: starting with $\varphi$ at $\rho_i$, the only terms that can be picked up due to the part $\sim e^{\lambda_iz}$ have growth $\sim e^{\lambda'z}$ where $\sim \bar\lambda'$ lies in the region below shaded green; meanwhile, the only terms that could pick up a term of growth $\sim e^{\lambda_*z}$ have growth $\sim e^{\lambda''z}$ where $\bar\lambda''$ lies in the region shaded pink. Since it is clear that any terms that are indeed picked up are likewise incapable of picking up a term of growth $\sim e^{\lambda_*}$, the claim is proved.

\tikzmath{\R = 4.2; \t = 110; \y1 = 6.5; \x2 = 2.5*cos(90-\t)+5; \y2 = 5+2.5*sin(90-\t); \r = 0.3;}

\begin{figure}[h]
	\centering
	\begin{tikzpicture}
        \filldraw[fill=yellow!10!white, draw=black] (5,5) circle (\R cm);
        \draw[dashed] (5, 5) -- ({\R*cos(90-\t)+5}, {\R*sin(90-\t)+5});
        \draw[dashed] (5, 5) -- (5, 5+\R);
        \node at (4.7, \y1) {$\bar \lambda_*$};
        \node at (\x2+0.3, \y2+0.2) {$\bar\lambda_i$};        
        \shade[left color=WildStrawberry!25!white,right color=WildStrawberry!50!white] (5,\y1+2.5) -- (5,\y1) -- ({5+4*cos(\t-90)},{\y1-4*sin(\t-90)});
        \shade[left color=YellowGreen,right color=YellowGreen!25!white] (\x2,\y2-2.5) -- (\x2,\y2) -- ({\x2+4*cos(-90-\t)},{\y2+4*sin(-90-\t)});
        \draw ({5+\r*cos(\t-90)},{\y1-\r*sin(\t-90)}) arc (90-\t:90:\r);
        \draw (\x2,{\y2-\r}) arc (-90:-90-\t:\r);
        \filldraw[fill=black, draw=black] (5, 5+\R) circle (0.05cm); 
        \filldraw[fill=black, draw=black] ({5+\R*cos(\t-90)}, {5+\R*sin(90-\t)}) circle (0.05cm); 
        \draw ({5+\r*cos(90-\t)},{5-\r*sin(\t-90)}) arc (90-\t:90:\r);
        \node at ({5+0.5*cos(45)}, {\y1+0.5*sin(45)}) {$\tau$};
        \node at ({\x2-0.5*cos(45)}, {\y2-0.5*sin(45)}) {$\tau$};
        \node at ({5+0.5*cos(45)}, {5+0.5*sin(45)}) {$\tau$};
        \node at ({5+(\R+0.3)*cos(\t-90)}, {5-(\R+0.3)*sin(\t-90)}) {$\rho_i$};
        \node at ({5}, {5+(\R+0.3)}) {$\rho_*$};
        \filldraw[fill=black, draw=black] (5, \y1) circle (0.05cm);
        \filldraw[fill=black, draw=black] (\x2, \y2) circle (0.05cm);
        \filldraw[fill=white, draw=black] (5,5) circle (0.07cm);    
    \end{tikzpicture}
\end{figure}

\end{proof}

We now make the following additional assumption:

\begin{assumption} \label{Assump: half plane}
    All $\lambda_j$ lie in a half-plane with boundary intersecting the origin, i.e. there exists an $\alpha \in \bb R/2\pi \bb Z$ with
    \begin{equation}
        \text{Re}(e^{i\alpha} \lambda_j) > 0 \text{ for all } j.
    \end{equation}
    Together with Assumption $\mbf{A}$ (resp. $\qu$), we denote this by $\mbf{A}_+$ (resp. $\qu_+)$.
\end{assumption}

Assumption \ref{Assump: half plane} is equivalent to $\cap_{j\in I} S_j \neq \emptyset$. The intuitive meaning of this is that there is some point in $\partial \rob$, namely $e^{i\alpha} \cdot \infty$, at which no horizontal section of $\mcal E$ has moderate growth whereas at the opposite point, $-e^{i\alpha}\cdot \infty$, every horizontal section has moderate growth.

\begin{theorem} \label{Thm: L* calcs RDC} 
    Assume $\mcal E \in \modd{}{\D{\bb G_m}}$ satisfies $\mbf A_+$ and let {$\sigma \in \cap_{j \in I} S_j$}. Let $X$ be smooth, $f: X \to \bb G_m$ a morphism and $Y \subset X$ a closed subvariety. We keep the notation from Lemma \ref{Lem: RDC from pullback}. Then there is a canonical isomorphism
    \begin{equation} \label{eqn: L* calcs RDC}
        H^i_{rd}(X, Y, f^*\mcal{E}) \simeq H^i\left(\widetilde{X}, \widetilde{Y} \cup \tilde f^{-1}(\sigma),\tilde f^{-1}L\right),
    \end{equation}
     for all $i \geq 0$.
\end{theorem}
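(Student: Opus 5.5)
By Lemma \ref{Lem: RDC from pullback} it suffices to produce a canonical isomorphism
\begin{equation*}
H^i(\widetilde{X}, \widetilde{Y}, \tilde f^{-1}L^{mod}) \simeq H^i(\widetilde{X}, \widetilde{Y}\cup \tilde f^{-1}(\sigma), \tilde f^{-1}L).
\end{equation*}
The plan is to compare both sides with the sheaf $L^{mod}$ on $\rob$ through an intermediate sheaf $L_{[\rob,\sigma]}$, where $\sigma$ is regarded as a point of $\partial\rob$. By Assumption \ref{Assump: half plane} and the choice of $\sigma$, no nonzero germ of $L$ at $\sigma$ has moderate growth, so $(L^{mod})_\sigma = 0$. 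Since $\sigma\in\overline{S}_i$ for every $i$, the composite $L^{mod}\hookrightarrow L\to L/L_{[\rob,\sigma]}=(\iota_\sigma)_*L_\sigma$ vanishes. Hence we get an inclusion $L^{mod}\hookrightarrow L_{[\rob,\sigma]}$. By Lemma \ref{Lem: L^> isom} and exactness of $(-)_{[Z,A]}$, its cokernel is
\begin{equation*}
Q:=\bigoplus_i \left((\iota_{\overline{S}_i})_*\bb C^{r_i}\right)_{[\rob,\sigma]} = \bigoplus_i (\iota_{\overline{S}_i\setminus\{\sigma\}})_!\,\bb C^{r_i}.
\end{equation*}
Each $\overline{S}_i\setminus\{\sigma\}$ is a closed arc with one interior point removed, i.e.\ a disjoint union of two half-open intervals $[a,\sigma)$ and $(\sigma,b]$.

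Next pull back along $\tilde f$. Using \eqref{eqn: [Z, A] pullback} and exactness of $\tilde f^{-1}$ and of $(-)_{[\widetilde X,\widetilde Y]}$, we obtain a short exact sequence
\begin{equation*}
0\to (\tilde f^{-1}L^{mod})_{[\widetilde X,\widetilde Y]}\to (\tilde f^{-1}L)_{[\widetilde X,\widetilde Y\cup\tilde f^{-1}(\sigma)]}\to (\tilde f^{-1}Q)_{[\widetilde X,\widetilde Y]}\to 0.
\end{equation*}
The theorem then reduces to showing $R\Gamma(\widetilde X,(\tilde f^{-1}Q)_{[\widetilde X,\widetilde Y]})=0$. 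The resulting isomorphism is canonical because the first map is canonical.

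The main obstacle is this vanishing. Each summand of $\tilde f^{-1}Q$ is the extension by zero of a constant sheaf on $\tilde f^{-1}(J)$, where $J$ is a half-open interval in $\partial\rob$ whose open end is $\sigma$, further relativised to $\widetilde Y$. So we must show $H^*(\tilde f^{-1}(J),\tilde f^{-1}(J)\cap\widetilde Y;\ \text{open end})=0$, in the sense of compact support towards $\sigma$. The idea is that $\tilde f$ is locally trivial over the circle at infinity, and that this holds compatibly with $\widetilde Y$ after refining the compactification; this is where the proof would invoke the local structure of $\tilde f_c$ near $\tilde f^{-1}(\partial)$ from \cite{CHH20}, \cite[\S 3.4]{FJexp}. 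Over $J\cong[0,1)$ the pair then becomes a product $F\times[0,1)$, and the cohomology of $F\times[0,1)$ relative to the end $[0,1)\ni$ open side vanishes, since $\Gamma$ of $(\mathbb C_{[0,1)})_!$ on $[0,1)$ is acyclic. Concretely, I would first try to prove that $\tilde f:\tilde f^{-1}(\partial\rob)\to\partial\rob$ is a topological fibration of pairs near the boundary, for instance via Thom's first isotopy lemma applied to the semi-analytic map with the stratification by the components of $P$ and $\widetilde Y$. I would then apply proper base change along the interval, or the homotopy $[0,1)\to\{0\}$, to conclude acyclicity.

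A complication is that $\tilde f^{-1}(J)$ lies only in the boundary $\pi^{-1}(P)$, whereas $Q$ is supported on the circle; still, $\tilde f^{-1}Q$ is supported on $\tilde f^{-1}(\partial\rob)$, so only the boundary fibration matters. If local triviality over a whole half-open arc is not available, I would cover $J$ by finitely many small intervals on which the isotopy lemma applies and use Mayer–Vietoris. This works because the sheaf is constant along $J$ and acyclicity is a local statement along the interval direction: by the Leray spectral sequence for $\tilde f$ restricted to the boundary, $R\tilde f_*$ of the summand is $\mathcal G_!$ for a locally constant complex $\mathcal G$ on $J$, and $R\Gamma(J,\mathcal G_{!\text{ at }\sigma})=0$.
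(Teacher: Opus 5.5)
Your reduction is the same as the paper's. The sequence $0\to L^{mod}\to L_{[\rob,\sigma]}\to\bigoplus_j\bb C^{r_j}_{[\overline S_j,\sigma]}\to 0$, pulled back along $\tilde f$ and relativised to $\widetilde Y$, reduces the theorem to acyclicity of the third term. Concretely, you must show that $\tilde f^{-1}(\sigma)\hookrightarrow\tilde f^{-1}(\overline S_i)$ and $\tilde f^{-1}(\sigma)\cap\widetilde Y\hookrightarrow\tilde f^{-1}(\overline S_i)\cap\widetilde Y$ induce isomorphisms on cohomology.

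The gap is in your proof of this, specifically the $\widetilde Y$ part. Thom's first isotopy lemma needs a \emph{proper} map that is a submersion on every stratum.
\begin{itemize}
\item Properness is easy to arrange: work with $\tilde f_c$ on the compact set $\pi^{-1}(P)\subset\widetilde X_c$, with $\pi^{-1}(P\cap H)$ a union of strata, rather than with $\tilde f$ on $\widetilde X$.
\item Stratumwise submersivity along $\widetilde Y_c\cap\pi^{-1}(P)$ is never addressed. For an arbitrary closed $Y$, a priori you only get local triviality away from finitely many critical directions, and one critical direction inside $\overline S_i$ breaks the argument.
\item Your fallback does not help. Covering $J$ by small intervals, or claiming that $R\tilde f_*$ is locally constant along $J$, presupposes local triviality at every point of $J$; for the non-proper $\tilde f$ you have no base change otherwise.
\item ``Refining the compactification'' cannot supply submersivity either. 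Blow-ups centred in $D$ leave $X$ untouched. If the locus where $Y$ fails to be a normal crossings divisor is non-compact, its closure meets $D$ in every compactification, and $\overline Y$ stays badly behaved there.
\end{itemize}

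What is missing is the paper's reduction to the normal crossings case:
\begin{itemize}
\item Take a resolution $\tau_c:\overline X_{B,c}\to\overline X_c$ that is an isomorphism off $\overline Y$ and makes $\tau_c^{-1}(\overline Y)$, together with the boundary, a normal crossings divisor.
\item Lift it via \cite[Lemma 4.4]{CHH20} to a proper map $\tilde\tau:\widetilde X_B\to\widetilde X$ over $\rob$, using $\tilde\tau_c^{-1}(\pi^{-1}(H))=\pi_B^{-1}(H_B)$.
\item Show by excision that $H^i(\widetilde X,\widetilde Y,\mscr F)\simeq H^i(\widetilde X_B,\widetilde Y_B,\tilde\tau^{-1}\mscr F)$ for every sheaf $\mscr F$. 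Apply this to $\mscr F=\tilde f^{-1}L^{mod}$ and to $\mscr F=(\tilde f^{-1}L)_{[\widetilde X,\tilde f^{-1}(\sigma)]}$; the paper then obtains the vanishing from \cite[Cor.~10.8]{CHH20}.
\end{itemize}
Once you are in that normal crossings situation, your own isotopy argument does go through. In local coordinates $\arg f=\arg u-\sum_k m_k\theta_k$ with $m_k>0$ and $u$ a unit. This is visibly submersive on every corner stratum of $\pi^{-1}(P)$, including those cut out by $H$ and by the components of $\overline Y$.

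The same computation settles the case $Y=\emptyset$ directly, so there your route is a legitimate and more self-contained substitute for citing \cite[Prop.~10.4]{CHH20}. For general $Y$, however, the resolution-plus-excision step is the missing ingredient in your proposal.
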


\begin{proof}
    From Lemma \ref{Lem: L^> isom}, we have an exact sequence
    \begin{equation}
        0 \to L^{mod} \to L \to \bigoplus_j (i_{\overline{S}_j})_*\bb{C}^{r_j} \to 0.
    \end{equation}
    We apply the exact functor $(\cdot)_{[\rob, \sigma]}$. Noting that the stalk $L^{mod}_{\sigma} = 0$, it is clear this operation leaves $L^{mod}$ unchanged, resulting in
    \begin{equation}
        0 \to L^{mod} \to L_{[\rob, \sigma]} \to \bigoplus_j \bb{C}^{r_j}_{[\overline{S}_j, \sigma]} \to 0.
    \end{equation}
    We apply the exact functors $\widetilde{f}^{-1}$ and $(\cdot)_{[\widetilde{X}, \widetilde{Y}]}$ to get yet another exact sequence
    \begin{equation}
        0 \to \tilde f^{-1}L^{mod}_{[\widetilde{X}, \widetilde{Y}]} \to \tilde f^{-1}L_{[\widetilde{X}, \widetilde Y \cup \tilde f^{-1}(\sigma)]} \to \bigoplus_j \bb{C}_{[\tilde f^{-1}(\overline{S}_j), \widetilde Y \cup \tilde f^{-1}(\sigma)]}^{r_j} \to 0,
    \end{equation}
    where we have used Eqn. \ref{eqn: [Z, A] pullback} several times. As the cohomology of the middle term is precisely the RHS of \eqref{eqn: L* calcs RDC}, the theorem will follow if we show that the cohomology of the last term vanishes. \par
    First we assume $Y = \emptyset$. Note for example that if $\tilde f: \rob \to \rob$ is the identity, the cohomology of the last term vanishes since $\overline{S}_i$ is contractible. Proposition 10.4 in \cite{CHH20} essentially states that this lifts to $\widetilde{X}$. We require that the inclusion $\tilde f^{-1}(\sigma) \subset \tilde f^{-1}(\overline S_i)$ induces an isomorphism on cohomology. This is implied by the cited proposition together with the fact that, in the notation of loc. cit., the inclusions
    \begin{equation}
        B^\circ_{\overline{X}}(X, f) \hookrightarrow B_{\overline X}(X), \qquad \quad B^\#_{\overline{X}}(X, f) \hookrightarrow B_{\overline X}(X)
    \end{equation}
    are homotopy equivalences. This proves the result for $Y = \emptyset$. \par 
    Now assume $Y \subset X$ is non-empty. Using the above, it suffices by the long exact sequence in relative cohomology to prove that the inclusion $\tilde f^{-1}(\sigma) \cap \widetilde{Y} \subset \tilde f^{-1}(\overline S_i) \cap \widetilde{Y}$ induces an isomorphism on cohomology. Corollary 10.8 in loc. cit. gives the result we require as long as $Y$ is a divisor with simple normal crossings. Thus we must reduce to this case. \par
    We achieve this via a resolution of singularities and an excision argument. Below, we construct a desingularisation $(X_B, Y_B, f_B)$ with a proper map $\tau: X_B \to X$ such that $Y_B = \tau^{-1}(Y)$ has simple normal crossings and such that there is commutative diagram \\
    \adjustbox{center}{
    \begin{tikzcd}
        \widetilde{X}_B \arrow{rr}{\tilde\tau} \arrow{dr}{\tilde f_B} & & \widetilde{X} \arrow{dl}{\tilde f} \\
        & \rob &
    \end{tikzcd}
    }
    where $\tilde\tau$ is proper and $\widetilde{X}_B$ and $\tilde f_B$ have their obvious meanings. It suffices to demonstrate that 
    \begin{equation} \label{eqn: isom2}
        H^i(\widetilde X, \widetilde Y, \mscr F) \simeq H^i(\widetilde X_B, \widetilde Y_B, \tilde\tau^{-1}\mscr F),
    \end{equation}
    for any $i$ and any sheaf $\mscr F$ on $\widetilde{X}$. This is because we can then choose $\mscr F = \tilde f^{-1}L^{mod}$ or $(\tilde f^{-1}L)_{[\widetilde{X}, \tilde f^{-1}(\sigma)]}$ for the left- and right-hand sides of \eqref{eqn: L* calcs RDC} respectively. This follows by excision; see the proof of Proposition 2.18 in \cite{HMS17}. \par
    It remains only to construct $\tilde\tau$. Recall that we denote by $(\overline X_c, \bar f_c: \overline X_c \to \bb P^1)$ a good compactification of $(X, f)$. By resolution of singularities, we find a variety $\overline X_{B, c}$ equipped with a proper morphism $\tau_c: \overline X_{B, c} \to \overline X_c$ such that $\overline Y_B := \tau_c^{-1}(\overline Y)$ is a divisor with normal crossings and $\tau_c\vert_{\overline X_{B, c}\setminus \overline Y_B}: {\overline X_{B, c}\setminus \overline Y_B} \to \overline X_c \setminus \overline Y$ is an isomorphism. Let $\bar f_{B, c} = \bar f_c \circ \tau_c$. Setting $X_B = \tau_c^{-1}(X)$ and $Y_B = \tau_c^{-1}(Y)$, we can decompose the normal crossings divisor $D_B := \overline{X}_{B, c} \setminus X_B$ as $P_B \cup H_B$, with $P_B$ and $H_B$ both normal crossings divisors and $P_B = \bar f_{B,c}^{-1}(\infty)$. Note that $H_B = \tau_c^{-1}(H)$. \par
    By \cite[Lemma 4.4]{CHH20}, the map $\tau_c$ induces a (proper) map $\tilde \tau_c: \widetilde{X}_{B, c} \to \widetilde{X}_c$, satisfying $\tilde \tau_c^{-1}(\pi^{-1}(H)) = \pi_B^{-1}(H_B)$. In particular, recalling that $\widetilde{X} = \widetilde{X}_c \setminus \pi^{-1}(H)$ and $\widetilde{X}_B = \widetilde{X}_{B, c} \setminus \pi_B^{-1}(H_B)$, we find via restriction the required map $\tilde\tau$.
\end{proof}

Thm. \ref{Thm: L* calcs RDC} suggests the following definition of rapid decay cohomology for singular varieties in our setting. It is analogous to the elementary definition of Fres\'an and Jossen and has the advantage of manifest independence of a choice of compactification.

\begin{definition}[RDC for arbitrary varieties] \label{Defn: sing RDC}
    Let $S_r = \{z \in \bb C^\times \vert \text{Re}\, z \geq r\}$. Let $X$ be a variety over $k$, $Y \subset X$ a closed subvariety, $f: X \to \bb G_m$ a morphism and suppose $\mcal E$ satisfies \ref{Assump: half plane} for some $\alpha$. We define the rapid decay cohomology of the quadruple $(X, Y, f, \mcal E)$ as
    \begin{equation}
        H^i_{rd}(X, Y, f^*\mcal E) := \underset{r \to \infty}{\text{colim}} \, H^i(X, Y \cup f^{-1}(e^{i\alpha}S_r), f^{-1}L).
    \end{equation}
\end{definition}
\noindent Here, if $X$ is singular, $f^*\mcal E$ is to be understood in quotation marks. These groups do not depend on the choice of (valid) $\alpha$ and agree with the previous definition, since for an alternative choice $\alpha'$ and large enough $r$, the inclusions
\begin{equation}
    f^{-1}(s) \hookrightarrow f^{-1}(e^{i\alpha}S_r \cap e^{i\alpha'}S_r) \hookrightarrow f^{-1}(e^{i\alpha}S_r)
\end{equation}
are homotopy equivalences for any $s \in e^{i\alpha}S_r \cap e^{i\alpha'}S_r$. This follows from Ehresmann's fibration theorem \cite[\S 3.1.2]{FJexp}. \par

This definition is the one we use for the rest of this paper. Of course, if $X$ is nonsingular, it agrees with the previous definition by Thm. \ref{Thm: L* calcs RDC}.

\begin{remark} \label{Rem: dim of RD}
    Thm. \ref{Thm: L* calcs RDC} shows that the dimension of the rapid decay cohomology groups $H^i_{rd}(X, Y, f^*\mcal{E})$ depends only on the local system $L$.
\end{remark}

\begin{remark}
    One can obtain the same result as in Thm. \ref{Thm: L* calcs RDC} even assuming only $\mbf{A}$, not $\mbf{A}_+$. However, the isomorphism is non-canonical and the proof is far messier. Nonetheless, it suffices to prove Remark \ref{Rem: dim of RD} and also to prove the basic lemma in just the same way as is detailed below.
\end{remark}

For $z \in \bb G_m$ in a small neighbourhood of $\sigma \in \rob$, we have
\begin{equation} \label{eqn: isom}
    H^i\left(\widetilde{X}, \widetilde{Y} \cup \tilde f^{-1}(\sigma), \tilde{f}^{-1}L\right) \simeq H^i\left(\widetilde{X}, \widetilde{Y} \cup f^{-1}(z), \tilde{f}^{-1}L\right) \simeq H^i\left(X, Y \cup f^{-1}(z), f^{-1}L\right),
\end{equation}
where we use in the last isomorphism that $X \hookrightarrow \widetilde{X}$ and $Y \hookrightarrow \widetilde Y$ are homotopy equivalences. This gives us the following. 

\begin{corollary} \label{Corr: RDC isom FF}
    Continuing with the above notation, suppose $\mcal E$ satisfies $\mbf A_+$ for some $\alpha \in \bb R/2\pi \bb Z$. Let $z \in \bb C$ be such that $\text{Re}\, e^{i\alpha}z \gg 0$. Then we have a canonical isomorphism
    \begin{equation} \label{eqn: RDC isom FF}
        H_{rd}^i(X, Y, f^*\mcal E) \simeq R^i(p_2)_*((f\circ p_1)^{-1}L)_{[X \times \bb A^1, \Gamma\cup Y \times \bb A^1]}\big\vert_z.
    \end{equation}
\end{corollary}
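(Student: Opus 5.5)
We combine Thm. \ref{Thm: L* calcs RDC} and the chain \eqref{eqn: isom}, which give
\begin{equation*}
H^i_{rd}(X, Y, f^*\mcal E) \simeq H^i\left(X, Y \cup f^{-1}(z), f^{-1}L\right)
\end{equation*}
for $z$ in a small neighbourhood of $\sigma := -e^{-i\alpha}\cdot\infty$ (suitably normalised so that $\sigma \in \cap_j S_j$). Such a neighbourhood contains every $z$ with $\text{Re}\, e^{i\alpha}z \gg 0$, after fixing the sign convention relating $\alpha$ and $\sigma$. It therefore remains to identify the right-hand side with the stalk at $z$ of the relative direct image in \eqref{eqn: RDC isom FF}.

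Write $\mscr G := ((f\circ p_1)^{-1}L)_{[X\times\bb A^1, \Gamma\cup Y\times \bb A^1]}$. By proper base change we would like to say that the stalk of $R^i(p_2)_*\mscr G$ at $z$ is $H^i(X\times\{z\}, \mscr G\vert_{X\times\{z\}})$. By \eqref{eqn: [Z, A] pullback}, this restriction is $(f^{-1}L)_{[X, Y\cup f^{-1}(z)]}$, since $\Gamma\cap (X\times\{z\}) = f^{-1}(z)\times\{z\}$. Its cohomology is exactly $H^i(X, Y\cup f^{-1}(z), f^{-1}L)$, which gives the claim.

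The obstacle is that $p_2$ is not proper, so the base change step needs justification. I would handle this as in the exponential case \cite{FJexp}. Compactify $X$ by a good compactification $\overline{X}_c$ with $f$ extending to $\bar f_c$. On $\overline X_c\times\bb A^1$, the sheaf $R(\bar\jmath\times\id)_*\mscr G$ is constructible with respect to an algebraic stratification, and $\bar p_2$ is proper. Hence $R(p_2)_*\mscr G$ is constructible, and its stalk at $z$ agrees with the cohomology on a small disc $\Delta\ni z$:
\begin{equation*}
(R^ip_{2*}\mscr G)_z \simeq H^i\left(X\times\Delta, \mscr G\vert_{X\times\Delta}\right).
\end{equation*}
For $z$ outside the finite set of critical values of $f$, including those at infinity, Ehresmann's theorem \cite[\S 3.1.2]{FJexp} makes $f$ a locally trivial fibration over $\Delta$, compatibly with $Y$. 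Since $L$ is a local system near $z$, the pair $(X\times\Delta, (\Gamma\cup Y\times\Delta)\cap(X\times\Delta))$ retracts onto its fibre over $z$ as a relative homotopy equivalence. This identifies the stalk with $H^i(X, Y\cup f^{-1}(z), f^{-1}L)$.

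Taking $\text{Re}\,e^{i\alpha}z$ large ensures that $z$ avoids these finitely many bad values. Canonicity follows because every map used is a natural restriction map, and the choices of $\sigma$ and $z$ are irrelevant by the homotopy equivalences used in \eqref{eqn: isom}.
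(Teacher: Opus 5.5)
Your argument is the paper's. Theorem~\ref{Thm: L* calcs RDC} together with \eqref{eqn: isom} gives $H^i(X, Y\cup f^{-1}(z), f^{-1}L)$, and this group is then read as the value at $z$ of the right-hand side of \eqref{eqn: RDC isom FF}. The paper leaves that last identification implicit. Your compactification, constructibility and Ehresmann argument supplies it correctly for $z$ outside the finitely many atypical values of $(X,Y,f)$. Strictly, the pair $(X\times\Delta, \Gamma_\Delta\cup Y\times\Delta)$ does not retract onto its fibre \emph{as a pair}. Still, comparing the ambient spaces, comparing the subspaces via Mayer--Vietoris for the closed cover, and applying the five lemma gives the same conclusion.

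Two points need correcting.

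First, Assumption~\ref{Assump: half plane} says $\mathrm{Re}(e^{i\alpha}\lambda_j)>0$, which means exactly that $\sigma = e^{i\alpha}\cdot\infty$ lies in $\cap_j S_j$. Your choice $-e^{-i\alpha}\cdot\infty$ is not in $\cap_j S_j$ in general; for example, $\lambda=1$, $\alpha=0$ puts it at $-\infty$, where $e^{z}$ decays. The corollary's displayed condition $\mathrm{Re}\, e^{i\alpha}z\gg 0$ has the conjugate sign relative to both the assumption and Definition~\ref{Defn: sing RDC}, which uses $e^{i\alpha}S_r$. The statement should therefore be read with $z$ far out in the half-plane $e^{i\alpha}S_r$.

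Second, a small neighbourhood of $\sigma$ in $\rob$ is a thin sector. So it does not contain every $z$ in the half-plane $e^{i\alpha}S_r$, however large $r$ is, and \eqref{eqn: isom} by itself only covers $z$ near the ray through $\sigma$. You already have the tools for the missing sentence:
\begin{enumerate}
\item For $r\gg 0$, the half-plane $e^{i\alpha}S_r$ is contractible and contains no atypical value.
\item By your Ehresmann argument, $R^i(p_2)_*\mscr G$ is therefore a constant local system there.
\item Parallel transport inside $e^{i\alpha}S_r$ then canonically identifies its stalk at an arbitrary such $z$ with its stalk at a point near $\sigma$, where \eqref{eqn: isom} applies.
\end{enumerate}
This is the same mechanism the paper uses after Definition~\ref{Defn: sing RDC} to show independence of $\alpha$. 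It is also what makes your final canonicity claim precise: transporting around $0$ instead would introduce the monodromy at infinity.
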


Note that the right hand side of this equation is (canonically equivalent to) a nearby fibre at infinity of the perverse sheaf $$R^i(p_2)_*((f \circ p_1)^{-1}L)_{[X \times \bb{A}^1, \Gamma \cup Y \times \bb{A}^1]}[1],$$ discussed in Section \ref{Sec: gen to other LS} (Lemma \ref{Lem: GC vanishes 2}).

\section{Review of perverse sheaves on the affine line} \label{Sec: review of perv}

We briefly review some key results on perverse sheaves on the affine line. \par

Let $F^\bullet \in D^b_c(\bb{Q})$ be a complex of $\bb{Q}$-sheaves on $\bb{A}^1$ such that the cohomology sheaves are constructible. Then $F^\bullet$ is a perverse sheaf if and only if the following conditions hold:
\begin{itemize}
    \item $\mcal{H}^i(F^\bullet) = 0$ unless $i \in \{-1, 0\}$.
    \item $\mcal{H}^{-1}(F^\bullet)$ has no non-zero global sections with finite support.
    \item $\mcal{H}^0(F^\bullet)$ is a skyscraper sheaf.
\end{itemize}
The full subcategory $\text{Perv}_0 \subset \text{Perv}$ consisting of the objects with zero cohomology admits a very tidy description:
\begin{proposition}[{\cite[Thm. 2.29]{KKP08}}] \label{Prop: perv0 charac}
    Every $F^\bullet \in \text{Perv}_0$ has cohomology concentrated in degree $-1$. In other words, the objects of $\text{Perv}_0$ are precisely those of the form $G[1]$ where $G$ is a constructible sheaf on $\bb{A}^1$ with vanishing cohomology.
\end{proposition}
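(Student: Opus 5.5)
The plan is to use the three-condition characterisation of perverse sheaves on $\bb{A}^1$ recalled just above, together with the vanishing of total cohomology. Let $F^\bullet \in \text{Perv}_0$. I would first show $\mcal{H}^0(F^\bullet) = 0$; granting this, $F^\bullet$ is concentrated in degree $-1$, so $F^\bullet \simeq G[1]$ with $G := \mcal{H}^{-1}(F^\bullet)$ constructible. Then $H^i(\bb{A}^1, G) = H^{i-1}(\bb{A}^1, F^\bullet) = 0$ for all $i$. Conversely, if $G$ is constructible with vanishing cohomology, then $G$ has no non-zero sections with finite support, because such sections would already be non-zero global sections and $H^0(\bb{A}^1, G) = 0$. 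Hence $G[1]$ satisfies the three conditions, so it is perverse, and it lies in $\text{Perv}_0$.

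For the vanishing of $\mcal{H}^0$, write $S := \mcal{H}^0(F^\bullet)$, a skyscraper sheaf supported on a finite set, and $G := \mcal{H}^{-1}(F^\bullet)$. The truncation triangle $G[1] \to F^\bullet \to S \to$ gives a long exact sequence
\begin{equation*}
    H^0(\bb{A}^1, F^\bullet) \to H^0(\bb{A}^1, S) \to H^2(\bb{A}^1, G) \to H^1(\bb{A}^1, F^\bullet) = 0 .
\end{equation*}
Since $H^0(\bb{A}^1, F^\bullet) = 0$, this yields an isomorphism $\Gamma(S) \simeq H^2(\bb{A}^1, G)$. The key step is therefore to prove $H^2(\bb{A}^1, G) = 0$ for every constructible sheaf $G$ on $\bb{A}^1$. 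Once this holds, $\Gamma(S) = 0$, and since $S$ is a skyscraper this forces $S = 0$.

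I expect this cohomological vanishing to be the main obstacle, and I would prove it by dévissage. Let $U \subset \bb{A}^1$ be a dense open subset on which $G$ is a local system, with finite complement $Z$. The sequence $0 \to j_!j^{-1}G \to G \to \iota_*\iota^{-1}G \to 0$ has a skyscraper as its third term, and skyscrapers have no $H^1$ or $H^2$. It therefore suffices to show $H^2(\bb{A}^1, j_!\mcal{L}) = 0$ for a local system $\mcal{L}$ on $U$. By Poincaré–Verdier duality, $H^2(\bb{A}^1, j_!\mcal{L})$ is dual to $H^0(\bb{A}^1, Rj_*\mcal{L}^\vee) = H^0(U, \mcal{L}^\vee)$, computed with compact supports on $U$. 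Since $U$ is a non-compact connected open Riemann surface, any global section of a local system on $U$ with compact support vanishes. Alternatively, $U$ deformation retracts to a wedge of circles, a $1$-dimensional CW complex, which gives the vanishing in degree $2$ directly. This is the one point where the geometry of $\bb{A}^1$ is used: non-compactness is what kills $H^2$.
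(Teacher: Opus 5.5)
Your argument is correct, and it is the standard one. The truncation triangle $\mcal{H}^{-1}(F^\bullet)[1] \to F^\bullet \to \mcal{H}^0(F^\bullet) \xrightarrow{+1}$, together with $H^0(\bb{A}^1, F^\bullet) = 0$, embeds $\Gamma(\bb{A}^1, \mcal{H}^0(F^\bullet))$ into $H^2(\bb{A}^1, \mcal{H}^{-1}(F^\bullet))$, and the converse direction is immediate. The paper gives no proof of its own; it only cites \cite{KKP08}. However, the step you call the main obstacle, $H^2(\bb{A}^1, G) = 0$ for constructible $G$, is exactly Artin's vanishing theorem for the affine curve $\bb{A}^1$, which the paper states right after the proposition. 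Citing it removes the need for the d\'evissage, which is in effect a hands-on proof of that special case.

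Inside that d\'evissage, one identification is wrong as written. Verdier duality gives
\[ H^2(\bb{A}^1, j_!\mcal{L})^\vee \simeq H^0_c(\bb{A}^1, Rj_*\mcal{L}^\vee) = \Gamma_c(\bb{A}^1, j_*\mcal{L}^\vee), \]
that is, horizontal sections on $U$ whose support is compact in $\bb{A}^1$. It does not give $H^0_c(U, \mcal{L}^\vee)$. The distinction matters: on $\bb{P}^1$ one still has $H^0_c(U, \mcal{L}^\vee) = 0$, yet $H^2(\bb{P}^1, j_!\bb{C}_U) = H^2_c(U, \bb{C}) \neq 0$. The correct group vanishes because a non-zero horizontal section on the connected set $U$ has support $U$, whose closure $\bb{A}^1$ is not compact. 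So it is the non-compactness of $\bb{A}^1$, not of $U$, that enters, as your last sentence intends.

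Similarly, the wedge-of-circles alternative only yields $H^2(U, \mcal{L}) = 0$, which is not the group you need. From the triangle $j_!\mcal{L} \to Rj_*\mcal{L} \to \iota_*\iota^{-1}Rj_*\mcal{L} \xrightarrow{+1}$ one gets
\[ H^1(U, \mcal{L}) \to \bigoplus_{z \in Z} (R^1j_*\mcal{L})_z \to H^2(\bb{A}^1, j_!\mcal{L}) \to H^2(U, \mcal{L}). \]
You would therefore also need surjectivity of the restriction maps from $H^1(U, \mcal{L})$ to the cohomology of the punctured discs around the points of $Z$. This is true for $U = \bb{C} \setminus Z$, but it requires an argument. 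Either supply it or drop that sentence.
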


\begin{theorem}[Artin's vanishing theorem {\cite[Corollaire 3.2]{Artin73}}]
    Let $F$ be a constructible sheaf on an affine variety $X$ of dimension $d$. Then $H^i(X, F) = 0$ if $i > d$.
\end{theorem}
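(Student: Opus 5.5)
The plan is to argue by induction on $d = \dim X$, following Artin's original strategy of fibring $X$ by affine curves. The case $d=0$ is trivial: $X$ is a finite set of points, so only $H^0$ survives. For $d=1$ the claim is that $H^i(X,F)=0$ for $i\geq 2$. I would prove this by dévissage. Choose a finite subset $Z\subset X$ such that $F|_{X\setminus Z}$ is a local system. The exact sequence $0\to j_!j^{-1}F\to F\to \iota_*\iota^{-1}F\to 0$ reduces the claim to $j_!$ of a local system on the punctured affine curve $U = X\setminus Z$. Now $U$, and likewise the pair $(\overline{U}^{\mathrm{an}}, Z)$ obtained after removing small discs around the points at infinity, deformation retracts onto a graph. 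Hence relative cohomology with local coefficients vanishes above degree $1$.

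For the inductive step, I reduce to $X=\bb A^d$. Indeed, by Noether normalisation there is a finite morphism $g: X\to \bb A^d$, and $g_*$ is exact on sheaves for finite maps. So $H^i(X,F)=H^i(\bb A^d, g_*F)$, and $g_*F$ is again constructible. Next, after a generic linear change of coordinates, I consider the projection $\pi:\bb A^d\to\bb A^{d-1}$. The aim is to show that $R^q\pi_*G$ is constructible and vanishes for $q\geq 2$ for any constructible $G$ on $\bb A^d$. The Leray spectral sequence $H^p(\bb A^{d-1},R^q\pi_*G)\Rightarrow H^{p+q}(\bb A^d,G)$ then gives vanishing above degree $(d-1)+1=d$ by the induction hypothesis applied to the constructible sheaves $R^0\pi_*G$ and $R^1\pi_*G$.

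The main obstacle is the control of $R^q\pi_*G$: $\pi$ is not proper, so stalks are not simply cohomology of fibres. To handle this, I would first choose the coordinates so that the closure $\overline{\Sigma}\subset\bb P^1\times\bb A^{d-1}$ of the singular locus $\Sigma$ of $G$ (a curve-like subset of dimension $\le d-1$) is finite over $\bb A^{d-1}$. This is again a Noether-normalisation-type genericity statement, and it ensures no part of $\Sigma$ escapes to infinity in the fibre direction. Then I would stratify $\bb A^{d-1}$ so that over each stratum $T$ the pair $(\bb P^1\times T,\ \overline\Sigma\cup\{\infty\}\times T)$ is topologically locally trivial. I would use Thom's first isotopy lemma for this, or equivalently the generic base change theorem. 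Over each stratum, the stalk of $R^q\pi_*G$ at $t$ then equals $H^q(\pi^{-1}(t),G|_{\pi^{-1}(t)})$. The fibre $\pi^{-1}(t)\cong\bb A^1$ is an affine curve, so the $d=1$ case gives vanishing for $q\geq2$. Local triviality gives local constancy on strata, hence constructibility.

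The point requiring the most care is the identification of stalks with fibre cohomology near the boundary of a stratum. Here I would use the compactification $\bb P^1\times\bb A^{d-1}$ and write $R\pi_*G=R\bar\pi_*Rk_*G$, where $k$ is the open inclusion and $\bar\pi$ is proper, so proper base change applies. The remaining issue is that $Rk_*G$ commutes with restriction to fibres. This holds on each stratum by the isotopy lemma, which is exactly why the finiteness of $\overline\Sigma$ over the base is needed.
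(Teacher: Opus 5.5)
Your argument is correct, but the paper has no proof of its own to compare it with: it simply cites SGA~4, Exp.~XIV, Cor.~3.2.

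\textbf{How the two routes differ.} The cited result concerns torsion \'etale sheaves. Artin's proof there is also an induction by fibring in curves. However, because $R^q\pi_*$ for the non-proper projection does not commute with base change, he has to prove a relative statement over strictly henselian local bases. Your generic choice of coordinates removes this difficulty. Once the singular locus $\Sigma$ of $G$ is finite over $\mathbb{A}^{d-1}$, the sheaf $G$ is locally constant near $\{\infty\}\times\mathbb{A}^{d-1}$. So on a product neighbourhood $D\times B$ there, $Rk_*G$ is pulled back from the disc $D$, and it commutes with restriction to every fibre and every stratum. Combined with proper base change for $\overline{\pi}$, this identifies the stalk of $R^q\pi_*G$ with $H^q(\mathbb{A}^1, G|_{\pi^{-1}(t)})$ for \emph{all} $t$, not just stratum by stratum.

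The isotopy lemma is therefore needed only for constructibility of $R^0\pi_*G$ and $R^1\pi_*G$, which is genuinely required to apply the induction hypothesis. For that step you need three things:
\begin{itemize}
\item base change to a whole stratum $T$, not only to points;
\item the fact that over a contractible piece of $T$, the local system on the trivialised complement of $\Sigma$ is pulled back from a single fibre;
\item handling the part of $G$ supported on $\Sigma$ separately, through the finite map $\Sigma\to\mathbb{A}^{d-1}$.
\end{itemize}

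\textbf{What each route buys.} Your proof is self-contained and works directly in the analytic topology with an arbitrary coefficient field. That is the setting in which the paper actually uses the theorem, namely constructible $\mathbb{Q}$- or $\mathbb{C}$-sheaves. The citation, by contrast, tacitly needs the comparison theorem and a passage from torsion to characteristic-zero coefficients. Moreover, the paper's explicit uses (the spectral-sequence arguments in Prop.~\ref{Prop: exp perv const} and Lemma~\ref{Lem: GC vanishes 2}) only involve $X=\mathbb{A}^1$, where your $d=1$ d\'evissage alone suffices.

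\textbf{Minor wording.}
\begin{itemize}
\item $\Sigma$ has dimension $\le d-1$; it is not ``curve-like''.
\item In the curve case, the ``relative cohomology of $(\overline{U}, Z)$'' should be read as $H^*(X, j_!L)$. Concretely, this is the cohomology of the compact surface with boundary relative to small circles around $Z$, since $L$ need not extend across $Z$.
\end{itemize}
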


We note the following definitions, which shall appear when we compare rapid decay cohomology to so-called \emph{perverse cohomology}.

\begin{definition}[Additive convolution and $\Pi$]
    For two complexes of constructible sheaves $A, B \in D^b_c(\bb{A}^1)$, their \emph{additive convolution} is given by
    \begin{equation}
        A * B := R\text{sum}_*(A \boxtimes B) \in D^b_c(\bb{A}^1).
    \end{equation}
    where $\text{sum}: \bb{A}^2 \to \bb{A}^1$ is the summation map. The endofunctor $\Pi: D^b_c(\bb{A}^1) \to D^b_c(\bb{A}^1)$ is defined by
    \begin{equation}
        A \to A * j_!j^{-1}\bb{C}[1]
    \end{equation}
    where $j: \bb{G}_m \hookrightarrow \bb{A}^1$.
\end{definition}

\begin{theorem}[{\cite[Prop. 2.4.3]{FJexp}}]
    \begin{itemize}
        \item For any $A \in \text{Perv}$, $\Pi(A) \in \text{Perv}_0$.
        \item $\Pi$ is a left-adjoint to the inclusion $\text{Perv}_0 \hookrightarrow \text{Perv}$.
        \item For any $A \in \text{Perv}$, the endofunctor $B \to A * B$ is exact with respect to the perverse $t$-structure. In particular, $\Pi$ is $t$-exact.
    \end{itemize}
\end{theorem}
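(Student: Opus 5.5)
The plan is to establish the third bullet first, since the other two follow from it and from one distinguished triangle. One caveat first: exactness cannot hold for every perverse $A$. For $A=B=\bb{C}[1]$ one gets $A\boxtimes B=\bb{C}_{\bb{A}^2}[2]$ and $R\text{sum}_*\bb{C}_{\bb{A}^2}[2]=\bb{C}_{\bb{A}^1}[2]$, which is not perverse. I will therefore prove the third bullet for $A\in\text{Perv}_0$, which is the reading of \cite{FJexp} and all that the other bullets need, since $j_!j^{-1}\bb{C}[1]\in\text{Perv}_0$.

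\textbf{Third bullet.} For $A,B$ perverse, $A\boxtimes B$ is perverse on $\bb{A}^2$. The map $\text{sum}$ is affine, so by Artin's vanishing theorem (applied fibrewise, in the usual way) $R\text{sum}_*$ is right $t$-exact. Dually, $R\text{sum}_!$ is left $t$-exact. Hence it suffices to show that the forget-supports map $R\text{sum}_!(A\boxtimes B)\to R\text{sum}_*(A\boxtimes B)$ is an isomorphism when $A\in\text{Perv}_0$. To do this, I change coordinates by $(x,y)\mapsto(x+y,y)$, which turns $\text{sum}$ into the first projection. I then compactify each fibre $\{x+y=z\}\simeq\bb{A}^1_y$ at $y=\infty$. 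The cone of $!\to *$ at $z$ is the nearby-cycle type contribution at infinity of $y\mapsto A_{z-y}\otimes B_y$. For $|y|$ large, $B$ is locally constant, so this contribution is $(\text{stalk of }B\text{ at }\infty)\otimes(\text{contribution of }A\text{ at }\infty)$. The latter is $\text{cone}(R\Gamma_c(A)\to R\Gamma(A))$, which vanishes because $A\in\text{Perv}_0$ has $R\Gamma(A)=0$ and, by Verdier duality, $R\Gamma_c(A)=0$. I expect this step to be the main obstacle: the behaviour at infinity must be controlled uniformly in $z$, which needs constructibility of $A,B$ with respect to a finite stratification, so that both are locally constant outside a large disc.

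\textbf{First bullet.} By the Künneth formula, $R\Gamma(A*B)=R\Gamma(A\boxtimes B)=R\Gamma(A)\otimes R\Gamma(B)$. Since $R\Gamma(\bb{A}^1,j_!\bb{C})=0$, the complex $\Pi(A)$ has vanishing cohomology. Convolution is commutative, and $j_!j^{-1}\bb{C}[1]\in\text{Perv}_0$, so the third bullet applied with this factor shows that $\Pi(A)$ is perverse. This also gives the $t$-exactness of $\Pi$.

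\textbf{Second bullet.} Start from $0\to j_!\bb{C}\to\bb{C}_{\bb{A}^1}\to\delta_0\to0$. Shifting by $[1]$ and convolving with $A$ gives a distinguished triangle
\[
\Pi(A)\to A*\bb{C}_{\bb{A}^1}[1]\to A*\delta_0[1]=A[1]\xrightarrow{+1},
\]
which I rotate to a triangle $K\to\Pi(A)\to A\to$. Here $A*\bb{C}_{\bb{A}^1}$ is constant, with fibre at $z$ equal to $R\Gamma(\bb{A}^1,A)$ up to a shift, since the fibre at $z$ is the cohomology of $A$ on the translate of $\bb{A}^1$. So $K\simeq\pi^*V$ is a constant complex for some $V\in D^b(\bb{C})$, where $\pi:\bb{A}^1\to\text{pt}$. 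For $G\in\text{Perv}_0$,
\[
\text{Hom}(\pi^*V[n],G)=\text{Hom}(V[n],R\pi_*G)=0
\]
for all $n$. Applying $\text{Hom}(-,G)$ to the triangle therefore gives $\text{Hom}(A,G)\xrightarrow{\sim}\text{Hom}(\Pi(A),G)$, naturally in $A$ and $G$. This is the required adjunction, with unit $\Pi(A)\to A$.
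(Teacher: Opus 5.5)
Your caveat is correct: the third bullet can only hold for $A\in\text{Perv}_0$, as $\bb{C}[1]*\bb{C}[1]\simeq\bb{C}_{\bb{A}^1}[2]$ shows. The gap is in your proof of that corrected bullet, and your first bullet and the $t$-exactness of $\Pi$ both rest on it.

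\textbf{The central claim is false.} You claim that $R\text{sum}_!(A\boxtimes B)\to R\text{sum}_*(A\boxtimes B)$ is an isomorphism for $A\in\text{Perv}_0$. This already fails for the object defining $\Pi$. Take $A=j_!j^{-1}\bb{C}[1]\in\text{Perv}_0$ and $B=\bb{C}_{\bb{A}^1}[1]$, and write $\pi:\bb{A}^1\to\text{pt}$. After your shear both pushforwards are constant, with
\[
A*B\simeq\pi^{-1}R\Gamma(\bb{A}^1,j_!\bb{C})[2]=0,
\qquad
R\text{sum}_!(A\boxtimes B)\simeq\pi^{-1}R\Gamma_c(\bb{G}_m,\bb{C})[2]\simeq\bb{C}_{\bb{A}^1}[1]\oplus\bb{C}_{\bb{A}^1}.
\]
The second is not even perverse. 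So $!$-convolution with an object of $\text{Perv}_0$ is not $t$-exact, and no argument squeezing $A*B$ between the two pushforwards can work.

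Two of your steps fail.
\begin{itemize}
\item ``By Verdier duality $R\Gamma_c(A)=0$'' is wrong. Duality identifies $R\Gamma_c(A)$ with the dual of $R\Gamma(\bb{D}A)$, and $\text{Perv}_0$ is not stable under $\bb{D}$. In the example, $\bb{D}A=Rj_*\bb{C}[1]$ has $R\Gamma\simeq R\Gamma(\bb{G}_m,\bb{C})[1]\neq 0$.
\item Your analysis at $y=\infty$ actually shows that the cone of $!\to *$ is the constant complex with fibre a shift of $R\Gamma(S^1,\mcal{L}_A\otimes\mcal{L}_B)$. Here $\mcal{L}_A,\mcal{L}_B$ are the local systems near $\infty$, with monodromies $T_A,T_B$. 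This cone is governed by $T_A\otimes T_B$. It does not factor as a stalk of $B$ times the contribution of $A$, because $B$ has monodromy at infinity. Take $A=B=j_!\mcal{K}[1]$, with $\mcal{K}$ the Kummer local system of monodromy $-1$. Both lie in $\text{Perv}_0$ and even satisfy $R\Gamma_c=0$, yet $T_A\otimes T_B=1$ and the cone is non-zero.
\end{itemize}

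\textbf{How to repair it.} The first bullet and the $t$-exactness of $\Pi$ do not need the third bullet.
\begin{itemize}
\item Rotating your triangle correctly gives $\pi^{-1}R\Gamma(\bb{A}^1,A)\to A\to\Pi(A)\xrightarrow{+1}$. You wrote $K\to\Pi(A)\to A$, which reverses the arrow between $A$ and $\Pi(A)$. The unit of the adjunction is $A\to\Pi(A)$, and your Hom-vanishing argument for the second bullet then goes through verbatim.
\item By Artin vanishing, $H^i(\bb{A}^1,A)=0$ for $i\notin\{-1,0\}$. The long exact sequence of perverse cohomology then shows that $\Pi(A)$ is perverse as soon as the evaluation map $H^{-1}(\bb{A}^1,A)\otimes\bb{C}_{\bb{A}^1}[1]\to A$ is a monomorphism in $\text{Perv}$. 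It is one: its source is a sum of copies of the simple object $\bb{C}_{\bb{A}^1}[1]$, and the map is injective on $\text{Hom}(\bb{C}_{\bb{A}^1}[1],-)$.
\item A triangulated functor preserving $\text{Perv}$ is $t$-exact, which gives the $t$-exactness of $\Pi$.
\end{itemize}
For the third bullet with a general $A\in\text{Perv}_0$ you need an argument that only involves the $*$-pushforward. One option is Riemann--Hilbert plus the Fourier transform, under which $*$-convolution becomes $\otimes^{L}_{\mcal{O}}$. The condition $H^*_{dR}(\bb{A}^1,M)=0$ for the regular holonomic module $M$ of $A$ says that $\xi$ acts invertibly on $\widehat{M}$. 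Hence $\widehat{M}$ is flat over $\bb{C}[\xi]$, and $\widehat{M}\otimes^{L}\widehat{N}$ is concentrated in a single degree.
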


\begin{definition}
    Let $e^{i\alpha}$ be a unit vector. The \emph{nearby fibre at infinity functor in the direction $e^{i\alpha}$} is the functor $\Psi_{e^{i\alpha}\infty}: \text{Perv}_0 \to \text{Vec}_\bb{C}$ given on objects by
\begin{equation}
    \Psi_{e^{i\alpha}\infty}(F[1]) = \underset{{r \to +\infty}}{\text{colim}} F(e^{i\alpha}\cdot S_r)
\end{equation}
where $S_r := \{z \in \bb{C}\vert\, \text{Re }z \geq r \}$.
\end{definition}
We record the following central result.
\begin{theorem}[{\cite[Thm. 12.6.6]{Katz90}}]
    The category $\text{Perv}_0$ is tannakian with product given by additive convolution. For any $\alpha \in \bb R$, a fibre functor is given by $\Psi_{e^{i\alpha}\infty}$.
\end{theorem}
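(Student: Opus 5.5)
The plan is to verify the axioms of a neutral tannakian category in three steps: first the symmetric monoidal structure, then the properties of $\Psi_{e^{i\alpha}\infty}$, and finally rigidity. Deligne's criterion then concludes: a $\bb C$-linear rigid abelian symmetric monoidal category with $\text{End}(\mathbf 1)=\bb C$ and an exact faithful $\bb C$-linear tensor functor to $\text{Vec}_{\bb C}$ is neutral tannakian.

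\textbf{Monoidal structure.} Take $A,B \in \text{Perv}_0$. By the theorem of Fres\`an--Jossen recalled above, convolution with a perverse sheaf is $t$-exact, so $A*B$ is perverse. Since $\text{sum}$ is a map to $\bb A^1$ and $R\Gamma(\bb A^1, R\text{sum}_*(-)) = R\Gamma(\bb A^2,-)$, the K\"unneth formula gives
\begin{equation}
R\Gamma(\bb A^1, A*B) \simeq R\Gamma(\bb A^1,A)\otimes R\Gamma(\bb A^1,B) = 0,
\end{equation}
so $A*B \in \text{Perv}_0$. Associativity and commutativity constraints come from those of $\text{sum}$ together with the K\"unneth isomorphisms. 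The unit object is $\mathbf 1 := \Pi(\delta_0) = j_!j^{-1}\bb C[1]$. Indeed, $\Pi$ is left adjoint to the inclusion $\text{Perv}_0\hookrightarrow\text{Perv}$ and restricts to the identity on $\text{Perv}_0$, so $A*\mathbf 1 = \Pi(A)\simeq A$ for $A\in\text{Perv}_0$. Finally, $\text{End}(\mathbf 1)=\text{End}(j_!\bb C)=\bb C$.

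\textbf{The fibre functor.} By Proposition \ref{Prop: perv0 charac}, every object is $F[1]$ with $F$ a constructible sheaf. For $r\gg0$, $F$ is a local system on the half-plane $e^{i\alpha}S_r$, because the singularities of $F$ are finite in number; hence $\Psi_{e^{i\alpha}\infty}(F[1])$ is a generic stalk.

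Exactness: a short exact sequence in $\text{Perv}_0$ gives a short exact sequence of the sheaves $F$, since $\mcal H^0$ vanishes for all three objects. Restricting to a half-plane avoiding all singularities keeps it exact.

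Faithfulness: if the generic stalk is zero, then $F$ is supported on a finite set. But $\mcal H^{-1}$ of a perverse sheaf has no nonzero sections with finite support, so $F=0$. Because the functor is exact, vanishing on objects implies faithfulness.

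\textbf{Tensor compatibility (main obstacle).} This is the step I expect to be the main obstacle. Write $H_r := e^{i\alpha}S_r$. Using $R\Gamma(F)=0$ and the long exact sequence of the pair, I would first rewrite
\begin{equation}
\Psi_{e^{i\alpha}\infty}(F[1]) \simeq H^0(H_r,F)\simeq H^1(\bb A^1, H_r; F).
\end{equation}
Then
\begin{equation}
\Psi_{e^{i\alpha}\infty}(A*B) \simeq H^{\bullet}(\bb A^2, \text{sum}^{-1}(H_{2r}); A\boxtimes B),
\end{equation}
taken in the appropriate degree. I would compare the pair $(\bb A^2,\text{sum}^{-1}(H_{2r}))$ with the product pair $(\bb A^2, H_r\times\bb A^1\cup\bb A^1\times H_r)$, whose cohomology is the relative K\"unneth product $H^1(\bb A^1,H_r;A)\otimes H^1(\bb A^1,H_r;B)$. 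To make this comparison, I would pass through the intermediate subspace $\text{sum}^{-1}(H_{2r})\cup H_r\times\bb A^1\cup\bb A^1\times H_r$. The difference between the pairs should be controlled by sheaves of the form $F_{[\ldots]}$ on half-planes. Their cohomology vanishes by the vanishing of $R\Gamma(A)$, $R\Gamma(B)$ combined with the contractibility of half-planes, via an Ehresmann-type argument as in the remark following Definition \ref{Defn: sing RDC}. Care is needed to check that the resulting isomorphism is compatible with the associativity and commutativity constraints; in particular, the commutativity constraint must carry the Koszul sign coming from the degree-one shift.

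\textbf{Rigidity.} I would define the dual by $A^\vee := [-1]^*\mathbb D A$, where $\mathbb D$ is Verdier duality and $[-1]$ is inversion $z\mapsto -z$, applying $\Pi$ if needed to land in $\text{Perv}_0$. Evaluation and coevaluation maps come from $\mathbb D$ and the adjunction of $\Pi$. Since the functor $\Psi_{e^{i\alpha}\infty}$ is exact, faithful and tensor, rigidity can be checked after applying it: the relevant zig-zag identities then reduce to statements about dual vector spaces of stalks of local systems, because Verdier duality on generic stalks is linear duality. With all the axioms in place, Deligne's criterion finishes the proof.
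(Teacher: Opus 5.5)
The paper does not prove this statement; it quotes it from Katz. Your outline therefore has to stand on its own, and most of it does.

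\textbf{What works.} The monoidal structure, the unit $j_!j^{-1}\bb C[1]$, and the exactness and faithfulness of $\Psi_{e^{i\alpha}\infty}$ are fine. One caveat: use the $t$-exactness only when one factor lies in $\text{Perv}_0$. For arbitrary perverse $A$ it fails, since $\bb C[1]*\bb C[1]=\bb C[2]$.

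The step you call the main obstacle also works, and more simply than you fear. With $H_r=e^{i\alpha}S_r$ one has $\text{sum}^{-1}(H_{2r})\subset U:=H_r\times\bb A^1\cup\bb A^1\times H_r$, so your intermediate space is just $U$. Closed Mayer--Vietoris then gives $R\Gamma(U,\text{sum}^{-1}(H_{2r});F\boxtimes G)=0$:
\begin{itemize}
\item On $H_r\times\bb A^1$ the sheaf is $F_{\text{gen}}\otimes p_2^{-1}G$.
\item A translation in $x$ depending on $y$ identifies its intersection with $\text{sum}^{-1}(H_{2r})$ with $H_r\times\bb A^1$, so both have cohomology $F_{\text{gen}}\otimes R\Gamma(\bb A^1,G)=0$.
\item The piece $\bb A^1\times H_r$ is symmetric, and $H_r\times H_r\subset\text{sum}^{-1}(H_{2r})$.
\end{itemize}
You still owe a base-change check for the non-proper map $\text{sum}$ over $H_{2r}$.

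\textbf{The gap: rigidity.} First, the evaluation map is never constructed. Verdier duality only pairs with compact supports. The trace on $R\Gamma_c(\bb A^1,A\otimes\mathbb{D}A)$ yields a morphism $A*_{!}[-1]^*\mathbb{D}A\to\delta_0$ out of the convolution built with $R\text{sum}_!$. The tensor product of $\text{Perv}_0$ uses $R\text{sum}_*$ instead, and the two differ even on $\text{Perv}_0$: for $A=j_!\bb C[1]$, $A*_{!}A$ has $\mcal H^0\simeq\bb C_{\bb A^1}$, whereas $A*A\simeq A$. The adjunction of $\Pi$ turns maps out of $A*[-1]^*\mathbb{D}A$ into maps out of $A*A^\vee$, but duality does not hand you such a map. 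The coevaluation is fine: it does come from $\text{id}_A$ via $i_0^!R\text{sum}_*$. For the evaluation you need an extra input, for instance that the cone of $A*_{!}B\to A*B$ is a constant complex and is therefore killed by $\Pi$.

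Second, the check after applying $\Psi$ does not reduce to generic stalks. For $X\in\text{Perv}$ one has $\Psi_{e^{i\alpha}\infty}(\Pi X)\simeq H^0(\bb A^1,H_r;X)$, which is not a stalk of $X$. For $X=Rj_*\bb C[1]$ it is $H^1(\bb C^\times,\bb C)$, and the generic stalk maps to zero in it. So $\Psi(A^\vee)\simeq\Psi(A)^*$ is a global duality between $H^0(\bb A^1,-H_r;\mathbb{D}A)$ and $H^0(\bb A^1,H_r;A)$; this is where $[-1]$ is needed. You would still have to show that $\Psi(\text{ev})$, transported through your K\"unneth isomorphism, is that pairing.

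\textbf{A cheaper route.} By the Deligne--Milne criterion, once you have $\text{End}(\mathbf 1)=\bb C$ and an exact faithful tensor functor to $\text{Vec}_{\bb C}$, rigidity follows if every rank-one object is invertible. For $F[1]\in\text{Perv}_0$, the condition $\chi(\bb A^1,F)=0$ forces the generic rank to equal $\sum_s\dim\phi_s(F)$. Hence the rank-one objects are $(j_a)_!M[1]$, with $j_a\colon\bb A^1\setminus\{a\}\hookrightarrow\bb A^1$ and $M$ of rank one. A direct computation then gives $(j_a)_!M[1]*(j_{-a})_!M^\vee[1]\simeq j_!\bb C[1]$.
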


For each $i \in \bb N_0$, Fres\'an and Jossen construct a perverse sheaf associated to a variety with potential $(X, f: X \to \bb A^1)$ from which the $i$th rapid decay cohomology can be recovered. The following argument does not appear in exactly this form in \cite{FJexp} but was communicated to the author at the seminar `Exponential Motives' in Oberwolfach, November 2024. \par
Take $X$ to be a smooth variety and $f: X \to \bb{A}^1$ a regular function, and let $\Delta \subset \bb{A}^1 \times \bb{A}^1$ denote the diagonal. Setting $\Gamma = \text{Graph}(f) \subset X \times \bb{A}^1$, we have $(f \times \id)^{-1}(\Delta) = \Gamma$ and thus
\begin{equation}
    (f \times \id)^{-1} \bb{Q}_{[\bb{A}^2, \Delta]} = \bb{Q}_{[X \times \bb{A}^1, \Gamma]}.
\end{equation}
Since $\bb A^1$ is contractible, the inclusion $\Gamma \hookrightarrow X \times \bb{A}^1$ is a homotopy equivalence and this sheaf has zero (global) cohomology. Note that it is also a constructible sheaf. Thus the associated spectral sequence of the composition of left-exact functors
\begin{equation}
    \modd{\bb{Q}}{X \times \bb{A}^1} \xrightarrow{(p_2)_*} \modd{\bb{Q}}{\bb{A}^1} \xrightarrow{\Gamma(\bb{A}^1, \cdot)} \text{Vec}_\bb{Q}
\end{equation}
converges to zero:
\begin{equation}
    E^{ij}_2 = R^i\Gamma(\bb{A}^1, R^j(p_2)_* \bb{Q}_{[X \times \bb{A}^1, \Gamma]}) \implies 0.
\end{equation}
However, since $R^j(p_2)_* \bb{Q}_{[X \times \bb{A}^1, \Gamma]}$ is constructible, by Artin's theorem, $E^{ij}_2 = 0$ if $i > 1$. Since the maps on the second page of the spectral sequence are of the type $E^{ij}_2 \to E^{i+2, j-1}$, all maps are zero and in particular, the sequence degenerates on the second page. Since it converges to zero, all of its entries are zero, and we have proven the following:
\begin{proposition} \label{Prop: exp perv const}
    $R^j(p_2)_* \bb{Q}_{[X \times \bb{A}^1, \Gamma]}$ has vanishing cohomology, and in particular by Prop. \ref{Prop: perv0 charac},
    \begin{equation}
        R^j(p_2)_* \bb{Q}_{[X \times \bb{A}^1, \Gamma]}[1] \in \text{Perv}_0.
    \end{equation}
\end{proposition}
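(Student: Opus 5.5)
The statement is essentially proven by the argument immediately preceding it; the plan is to make that argument rigorous in three steps.

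\emph{Step 1: the sheaf is constructible with zero global cohomology.} Set $G := \bb{Q}_{[X \times \bb{A}^1, \Gamma]}$. Constructibility of each $R^j(p_2)_* G$ follows from constructibility of $G$ with respect to an algebraic stratification (the pair $(X\times\bb A^1,\Gamma)$ is algebraic), together with the fact that $R(p_2)_*$ preserves constructible complexes for algebraic maps. The vanishing $R\Gamma(X\times\bb A^1, G)=0$ comes from the long exact sequence
\[
0 \to G \to \bb{Q} \to \iota_*\bb{Q}_\Gamma \to 0.
\]
The restriction $H^\bullet(X\times\bb A^1,\bb Q)\to H^\bullet(\Gamma,\bb Q)$ is an isomorphism, because $\Gamma\cong X$ via $p_1$ and $p_1$ is a homotopy equivalence, $\bb A^1$ being contractible.

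\emph{Step 2: the Leray spectral sequence collapses, forcing every term to vanish.} Use
\[
E_2^{ij}=H^i(\bb A^1, R^j(p_2)_*G)\Rightarrow H^{i+j}(X\times\bb A^1,G)=0.
\]
Artin vanishing applied on the affine curve $\bb A^1$ gives $E_2^{ij}=0$ for $i>1$, so only the columns $i=0,1$ survive. Every differential $d_r$ with $r\ge 2$ changes $i$ by $r\ge2$, hence all differentials vanish and $E_2=E_\infty$.

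The point needing care is that the abutment is zero, which must force each $E_\infty^{ij}$ to be zero and not merely some extension to be trivial. This holds because $E_\infty^{ij}$ are the graded pieces of a finite filtration on the zero group. Therefore $H^0(\bb A^1,R^j(p_2)_*G)$ and $H^1(\bb A^1,R^j(p_2)_*G)$ both vanish, and the higher groups vanish by Artin. So each $R^j(p_2)_*G$ has vanishing cohomology.

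\emph{Step 3: membership in $\mathrm{Perv}_0$.} Put $F=R^j(p_2)_*G$ and check the three listed criteria for $F[1]$:
\begin{itemize}
\item Cohomology is concentrated in degree $-1$ by construction.
\item $\mcal H^0 = 0$, so it is trivially a skyscraper sheaf.
\item Any global section of $F$ with finite support is an element of $H^0(\bb A^1,F)=0$, so there are none.
\end{itemize}
Hence $F[1]$ is perverse with zero cohomology, so $F[1] \in \mathrm{Perv}_0$; this is consistent with Prop.~\ref{Prop: perv0 charac}.

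The main obstacle is the constructibility of $R^j(p_2)_*G$. The map $p_2$ is not proper, so I would invoke the general preservation of algebraically constructible complexes under $Rf_*$ for morphisms of complex varieties (Verdier/Deligne). The rest is formal.
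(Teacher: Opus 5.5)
Your proposal is correct and follows the paper's argument. Both derive zero global cohomology of $\bb{Q}_{[X \times \bb{A}^1, \Gamma]}$ from the homotopy equivalence $\Gamma \hookrightarrow X \times \bb{A}^1$, and both use the Leray spectral sequence for $\Gamma(\bb{A}^1, \cdot) \circ (p_2)_*$, which has only two columns by Artin vanishing, degenerates at $E_2$, and so has all entries zero. The differences are cosmetic: you spell out why $R^j(p_2)_*$ preserves constructibility and the filtration argument, and you check the perversity criteria for $F[1]$ by hand instead of citing Prop.~\ref{Prop: perv0 charac}.
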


\begin{remark}
    In \cite[Prop. 3.2.2]{FJexp}, the authors demonstrate that there is a canonical isomorphism
    \begin{equation}
        \Pi\left(\prescript{p}{}{\mcal{H}^j}(Rf_*\bb{Q})\right) \simeq R^j(p_2)_* \bb{Q}_{[X \times \bb{A}^1, \Gamma]}[1].
    \end{equation}
    The left-hand side is the so-called perverse cohomology. It is this result we prove a version of in Proposition \ref{Prop: preperv isom}, where we generalise to a local system $L$ on $\bb G_m$ with quasi-unipotent monodromy.
\end{remark}

\section{A vanishing result} \label{Sec: gen to other LS}

In this section, we prove the vanishing of certain cohomology groups, which shall be used in the results of Section \ref{Sec: comp to perv} on perverse sheaves and rapid decay cohomology (Proposition \ref{Prop: preperv isom}). The results in this section only depend of the structure of the variety $X$ as a topological space, so we can work over the category of locally compact topological spaces. \par

In the following, we use the notation $z^{\frac ab}\bb C$ to denote a rank 1 local system on $\bb G_m$ with monodromy $e^{2\pi i\frac ab}$. Recall also that if $X$ is a topological space or algebraic variety, $p_1: X \times \bb A^1 \to X$ and $p_2: X \times \bb A^1 \to \bb A^1$ denote the projections.

\begin{lemma} \label{Lem: GC vanishes 1}
    Let $X$ be a locally compact topological space, $f: X \to \bb{G}_m$ be a continuous map and $0 \leq a < b$ be integers. We set $\Gamma \subset X \times \bb{A}^1$ to be the graph of $X \xrightarrow{f} \bb{G}_m \hookrightarrow \bb{A}^1$. Then
    \begin{equation}
        R\Gamma\left(X \times \bb{A}^1, \left((f\circ p_1)^{-1}(z^{\frac{a}{b}}\bb{C})\right)_{[X\times\bb{A}^1, \Gamma]}\right) = 0.
    \end{equation}
\end{lemma}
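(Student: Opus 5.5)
The plan is to reduce the statement to the case $X = \bb{G}_m$, $f = \id$ by a change of coordinates that untwists the graph. Consider the map
\begin{equation*}
    \Phi: X \times \bb{A}^1 \to X \times \bb{A}^1, \qquad (x, t) \mapsto (x, t - f(x)).
\end{equation*}
It is a homeomorphism over $X$ and sends $\Gamma$ onto $X \times \{0\}$. Transporting the sheaf along $\Phi$, it suffices to show that
\begin{equation*}
    R\Gamma\left(X \times \bb{A}^1, (p_1^{-1}f^{-1}L)_{[X\times \bb{A}^1, X \times \{0\}]}\right) = 0, \qquad L := z^{\frac ab}\bb C.
\end{equation*}
This works because the local system $(f\circ p_1)^{-1}L$ is pulled back from $X$, so it is preserved by $\Phi$, which commutes with $p_1$.

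Now write $M := f^{-1}L$, a local system on $X$. We have the exact sequence
\begin{equation*}
    0 \to (p_1^{-1}M)_{[X \times \bb{A}^1, X\times\{0\}]} \to p_1^{-1}M \to (\iota_0)_*M \to 0,
\end{equation*}
and we need the restriction map $R\Gamma(X\times \bb{A}^1, p_1^{-1}M) \to R\Gamma(X \times \{0\}, M)$ to be an isomorphism. This follows from homotopy invariance of sheaf cohomology for locally constant sheaves pulled back along the projection $X \times \bb{A}^1 \to X$, since $\bb{A}^1 \cong \bb{C}$ is contractible. Equivalently, $Rp_{1*}p_1^{-1}M \simeq M$, using that $X$ is locally compact and the fibre $\bb{C}$ is contractible (Vietoris–Begle for the proper version, or directly the homotopy invariance theorem, e.g. Kashiwara–Schapira Cor. 2.7.7). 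Composing with the zero section gives the claimed isomorphism, so the relative term has vanishing $R\Gamma$.

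The main obstacle is justifying that the shear $\Phi$ is well defined and preserves the sheaf. It is defined on all of $X\times\bb{A}^1$ because the second factor is $\bb{A}^1$ rather than $\bb{G}_m$: $t - f(x)$ may be $0$, and the local system is pulled back from $X$, not from the $t$-coordinate. One should also check identity \eqref{eqn: [Z, A] pullback} under the homeomorphism, $\Phi^{-1}(\mscr{F}_{[Z,X\times 0]}) = (\Phi^{-1}\mscr F)_{[Z,\Gamma]}$. Note that the hypothesis on $a/b$ is then not needed: the lemma holds for any local system on $\bb G_m$, just as Proposition \ref{Prop: exp perv const} rests on $\Gamma \hookrightarrow X\times\bb{A}^1$ being a homotopy equivalence.
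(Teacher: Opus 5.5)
Your argument is correct, but it is not the paper's route. The paper exploits the finite monodromy of $z^{\frac ab}\bb C$. With $r_b(z)=z^b$ it uses $(r_b)_*\bb C\simeq\bb C\oplus z^{\frac1b}\bb C\oplus\dots\oplus z^{\frac{b-1}{b}}\bb C$, which makes $z^{\frac ab}\bb C$ a direct summand of a local system trivialised by a finite cover. It then applies proper base change to the cartesian square with $Z=X\times_{\bb G_m}\bb G_m$ (fibre product along $r_b$). This reduces the claim to the constant sheaf $\bb C_{[Z\times\bb A^1,\Gamma']}$, whose cohomology vanishes because the graph inclusion $\Gamma'\hookrightarrow Z\times\bb A^1$ is a homotopy equivalence.

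You instead straighten $\Gamma$ to the zero section by the shear $(x,t)\mapsto(x,t-f(x))$. This is a homeomorphism over $X$, so it preserves $p_1^{-1}f^{-1}L$. You then invoke $M\xrightarrow{\sim}Rp_{1*}p_1^{-1}M$ for an arbitrary sheaf $M$ on $X$.

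The two routes trade off as follows. The paper's route only needs homotopy invariance with constant coefficients, the same input as Proposition \ref{Prop: exp perv const}, but it pays for this with the finite-monodromy hypothesis. Yours needs homotopy invariance for the projection with arbitrary coefficients, which is standard. In return it never uses where the sheaf on $X$ comes from.

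As you note, this makes the restriction to $z^{\frac ab}\bb C$ superfluous. Combined with the exact sequence for $Y\times\bb A^1$ used in the paper, your argument gives Lemma \ref{Lem: GC vanishes 2}\eqref{GC van 2.1} for every local system $L$ on $\bb G_m$, with no quasi-unipotence and no induction on the rank. It also shows that the hypothesis of Proposition \ref{Prop: preperv isom} holds for every sheaf $\mcal G$ on $X$.

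One minor correction: Vietoris--Begle in its usual form requires a proper map, which $p_1$ is not. The result you actually need is homotopy invariance along a projection $X\times\bb R\to X$, applied twice.
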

\begin{proof}
    Let $r_b: \bb{G}_m \to \bb{G}_m$ be the map taking $z \to z^b$. We first note that
    \begin{equation}
        (r_b)_*\bb{C} \simeq \bb{C} \oplus z^{\frac{1}{b}}\bb{C} \oplus \dots \oplus z^{\frac{b-1}{b}}\bb{C}.
    \end{equation}
    Indeed, each fibre consists of $b$ points, so it a local system of rank $b$ and one sees easily that the monodromy matrix has characteristic polynomial $T^b - 1$. This means in particular that
    \begin{equation}
        (r_b \times \id)_*(r_b \times \id)^{-1}\bb{C} \simeq p_1'^{-1}\left(\bb{C} \oplus z^{\frac{1}{b}}\bb{C} \oplus \dots \oplus z^{\frac{b-1}{b}}\bb{C}\right).
    \end{equation}
    where $p'_1: \bb{G}_m \times \bb{A}^1 \to \bb{G}_m$ is the projection and $\id = \id_{\bb A^1}$. Setting $\Delta' \subset \bb{G}_m \times \bb{A}^1$ as the diagonal, we find via the projection formula that
    \begin{equation}
    \begin{aligned}
        (r_b \times \id)_*(r_b \times \id)^{-1}\left(\bb{C}_{[\bb G_m \times \bb A^1, \Delta']}\right) &\simeq p_1'^{-1}\left(\bb{C} \oplus \dots \oplus z^{\frac{b-1}{b}}\bb{C}\right)_{[\bb G_m \times \bb A^1, \Delta']} \\
        &\simeq p_1'^{-1}\left(\bb{C}\right)_{[\bb G_m \times \bb A^1, \Delta']} \oplus \dots \oplus p_1'^{-1}\left(z^{\frac{b-1}{b}}\bb{C}\right)_{[\bb G_m \times \bb A^1, \Delta']}.
    \end{aligned}
    \end{equation}
    Since $p'_1 \circ (f \times \id) = f \circ p_1$ and cohomology commutes with direct summations, it suffices to prove that $(f \times \id)^{-1}(r_b \times \id)_*(r_b \times \id)^{-1} \bb{C}_{[\bb{G}_m \times \bb{A}^1, \Delta']}$ has zero cohomology. \par

    \vspace{1pt}
    
    Choose $(Z, q_1: Z \to \bb{G}_m, q_2: Z \to X)$ so that the diagram \\
    \adjustbox{center}{
        \begin{tikzcd}[sep = large]
            Z \times \bb{A}^1 \arrow{r}{q_1 \times \id} \arrow{d}{q_2 \times \id} & \bb{G}_m \times \bb{A}^1 \arrow{d}{r_b \times \id} \\
            X \times \bb{A}^1 \arrow{r}{f \times \id} & \bb{G}_m \times \bb{A}^1
        \end{tikzcd}
    }
    is cartesian. The vertical maps are proper and all spaces involved are locally compact, so we have an isomorphism of sheaves on $X \times \bb{A}^1$
    \begin{equation}
    \begin{aligned}
        (f\times \id)^{-1}(r_b \times \id)_*[(r_b \times \id)^{-1}\bb{C}_{[\bb{G}_m \times \bb{A}^1, \Delta']}] &\simeq (q_2 \times \id)_*(q_1 \times \id)^{-1} [(r_b \times \id)^{-1}\bb{C}_{[\bb{G}_m \times \bb{A}^1, \Delta']}] \\
        &\simeq (q_2 \times \id)_* [((r_b \circ q_1) \times \id)^{-1}\bb{C}_{[\bb{G}_m \times \bb{A}^1, \Delta']}] \\
        &= (q_2 \times \id)_* [\bb{C}_{[Z \times \bb A^1, \Gamma']}]
    \end{aligned}
    \end{equation}
    where $\Gamma' \subset Z \times \bb{A}^1$ is the graph of $Z \xrightarrow{r_b \circ q_1} \bb{G}_m \hookrightarrow \bb{A}^1$. For the final equality, we have used that $\Gamma' = ((r_b \circ q_1) \times \id)^{-1}(\Delta')$ and applied Eqn. \eqref{eqn: [Z, A] pullback}. The map $q_2 \times \id$ is finite, in particular $R(q_2 \times \id)_* = (q_2 \times \id)_*$. Therefore the cohomology groups of $(q_2 \times \id)_*\bb{C}_{[Z\times \bb A^1, \Gamma']}$ are the same of those of $\bb{C}_{[Z\times \bb A^1, \Gamma']}$. But the inclusion $\Gamma' \hookrightarrow Z \times \bb{A}^1$ is a homotopy equivalence since $\bb{A}^1$ is contractible, and all cohomology groups vanish.
\end{proof}

\begin{lemma} \label{Lem: GC vanishes 2}
    Suppose $L$ is a local system of $\bb C$-vector spaces on $\bb{G}_m$ with quasi-unipotent monodromy. Let $f: X \to \bb{G}_m$, $\Gamma \subset X \times \bb{A}^1$ be as in Lemma \ref{Lem: GC vanishes 1} and $Y \subset X$ a closed subset. Then
    \begin{enumerate}
        \item $((f\circ p_1)^{-1}L)_{[X \times \bb{A}^1, Y \times \bb{A}^1 \cup \Gamma]}$ has zero cohomology, \label{GC van 2.1}
        \item $R^i(p_2)_*((f\circ p_1)^{-1}L)_{[X \times \bb{A}^1, Y \times \bb{A}^1 \cup \Gamma]}[1]$ is in $\text{Perv}_0$. \label{GC van 2.2}
    \end{enumerate}
\end{lemma}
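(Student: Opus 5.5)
We reduce part \ref{GC van 2.1} to Lemma \ref{Lem: GC vanishes 1} in three steps: remove $Y$, filter $L$ into rank-one pieces, and then deduce part \ref{GC van 2.2} from part \ref{GC van 2.1} by the spectral sequence argument preceding Proposition \ref{Prop: exp perv const}.

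\textbf{Removing $Y$.} Consider the short exact sequence
\begin{equation*}
0 \to \mscr G_{[X\times\bb A^1, Y\times \bb A^1\cup\Gamma]} \to \mscr G_{[X\times\bb A^1,\Gamma]} \to (\iota_{Y\times\bb A^1})_*\big(\mscr G\vert_{Y\times\bb A^1}\big)_{[Y\times\bb A^1, \Gamma_Y]} \to 0,
\end{equation*}
where $\mscr G = (f\circ p_1)^{-1}L$ and $\Gamma_Y = \Gamma\cap (Y\times\bb A^1)$ is the graph of $f\vert_Y$. Exactness follows from the exactness of the functors $(\cdot)_{[Z,A]}$, using $A\cup B$ versus $A$ and \eqref{eqn: [Z, A] pullback}. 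The restriction of $\mscr G$ to $Y \times \bb A^1$ is $(f\vert_Y\circ p_1)^{-1}L$. By the long exact sequence in cohomology, it therefore suffices to prove the case $Y=\emptyset$, for arbitrary locally compact $X$ (applying it once to $X$ and once to $Y$).

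\textbf{Filtering $L$.} Since $L$ has quasi-unipotent monodromy $T$, some power $T^b$ is unipotent, so every eigenvalue of $T$ is a $b$-th root of unity $e^{2\pi i a/b}$ with $0\le a<b$. Choose a basis of a stalk that triangularises $T$. This yields a filtration $0=L_0\subset L_1\subset\dots\subset L_n=L$ by sub-local systems whose graded pieces are rank-one local systems $z^{a_k/b}\bb C$. The functors $(f\circ p_1)^{-1}$ and $(\cdot)_{[X\times\bb A^1,\Gamma]}$ are exact, so they carry this filtration to a filtration of $\mscr G_{[X\times\bb A^1,\Gamma]}$ with graded pieces $((f\circ p_1)^{-1}z^{a_k/b}\bb C)_{[X\times\bb A^1,\Gamma]}$. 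Each graded piece has vanishing $R\Gamma$ by Lemma \ref{Lem: GC vanishes 1}. Induction on $n$ with the long exact sequences then gives part \ref{GC van 2.1}.

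\textbf{Part \ref{GC van 2.2}.} Consider the Leray spectral sequence
\begin{equation*}
E_2^{ij}=H^i\big(\bb A^1, R^j(p_2)_*\mscr G_{[X\times \bb A^1, Y\times\bb A^1\cup\Gamma]}\big)\Rightarrow 0.
\end{equation*}
It converges to zero by part \ref{GC van 2.1}. If the sheaves $R^j(p_2)_*(\cdots)$ are constructible, Artin vanishing gives $E_2^{ij}=0$ for $i>1$. All differentials $E_2^{ij}\to E_2^{i+2,j-1}$ then vanish, so the sequence degenerates at $E_2$ and every term is zero. Hence each $R^j(p_2)_*(\cdots)$ is a constructible sheaf with vanishing cohomology, and Proposition \ref{Prop: perv0 charac} places its shift by $[1]$ in $\text{Perv}_0$.

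\textbf{Main obstacle.} The hard step is this constructibility. For part \ref{GC van 2.2}, $X$, $Y$ and $f$ must be algebraic, with $L$ constructible with respect to an algebraic stratification. I would argue as follows. The sheaf $\mscr G_{[\ldots]}$ is algebraically constructible on $X\times\bb A^1$: it is $j_!j^{-1}$ of a local system, where $j$ is the inclusion of the complement of the closed subvariety $Y\times\bb A^1\cup\Gamma$. By the standard theorem (Verdier generic base change), $R(p_2)_*$ of an algebraically constructible sheaf is constructible on $\bb A^1$. The cohomology sheaves of $R(p_2)_*$ are, in turn, constructible because $D^b_c$ is preserved.
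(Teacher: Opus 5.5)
Your proposal is correct and follows the paper's proof essentially step for step. Both arguments use the same short exact sequence to remove $Y$, and both reduce $L$ to rank-one pieces $z^{a/b}\bb C$ handled by Lemma \ref{Lem: GC vanishes 1}; the paper inducts on the rank using one eigenvector line and its cokernel, which amounts to your triangularising flag. Part (2) is then obtained from the Leray spectral sequence together with Artin vanishing, exactly as you do. The only difference is that you explicitly justify the constructibility of $R^j(p_2)_*(\cdots)$, which needs $X$, $Y$ and $f$ to be algebraic; the paper simply asserts this point.
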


\begin{proof}
    We proceed by induction on the rank of $L$, first assuming $Y$ is empty. The above lemma covers the rank $1$ case. Any eigenvector of the monodromy determines a rank one subsystem of $L$ and the quasi-unipotence means that its eigenvalue is a root of unity. Thus we have a subsheaf $z^{\frac{a}{b}}\bb{C} \hookrightarrow L$ for some $0\leq a < b$. Let $L'$ be its cokernel. The induction hypothesis applies to $L'$. Since $(f\circ p_1)^{-1}$ and $(\cdot)_{[X \times \bb{A}^1, \Gamma]}$ are exact functors, we get a short exact sequence 
    \begin{equation}
        0 \to ((f\circ p_1)^{-1}z^{\frac{a}{b}}\bb{C})_{[X \times \bb{A}^1, \Gamma]} \to ((f\circ p_1)^{-1}L)_{[X \times \bb{A}^1, \Gamma]} \to ((f\circ p_1)^{-1}L')_{[X \times \bb{A}^1, \Gamma]} \to 0
    \end{equation}
    Then from the long exact sequence in cohomology, the induction hypothesis and Lemma \ref{Lem: GC vanishes 1}, we find that the cohomology of the central term vanishes, proving \eqref{GC van 2.1} for $Y = \emptyset$. We derive the general case from the exact sequence
    \begin{equation}
        0 \to ((f\circ p_1)^{-1}L)_{[X \times \bb{A}^1, Y \times \bb{A}^1 \cup \Gamma]} \to ((f\circ p_1)^{-1}L)_{[X \times \bb{A}^1, \Gamma]} \to ((f\circ p_1)^{-1}L)_{[Y \times \bb{A}^1, \Gamma \cap Y \times \bb{A}^1]} \to 0.
    \end{equation}
    The cohomology of the last two terms vanishes (for the last term, it follows by applying the above to the pair $(Y, f\vert_Y)$) and therefore so does that of the first. Thus \eqref{GC van 2.1} is proven. For \eqref{GC van 2.2}, we use again the factorisation of functors
    \begin{equation}
        \Gamma(X \times \bb{A}^1, \cdot) = \Gamma(\bb{A}^1, \cdot) \circ (p_2)_*,
    \end{equation}
    the resulting spectral sequence and Artin's theorem (all exactly analogously to the proof of Prop. \ref{Prop: exp perv const}), we determine that $R^i(p_2)_*((f\circ p_1)^{-1}L)_{[X \times \bb{A}^1, \Gamma \cup Y \times \bb A^1]}$ has zero cohomology. Since it is constructible, shifting it one to the left in the derived category gives an object in $\text{Perv}_0$.
\end{proof}

\section{Comparison to perverse cohomology} \label{Sec: comp to perv}

Now we follow arguments in \cite[\S 3.2, 3.3]{FJexp}, adapted to our setting, that will lead us to a perverse sheaf associated to $(X, Y, f, \mcal{E})$ from which we can recover rapid decay cohomology, where $X$ is any variety over $k$, $Y$ a closed subvariety, $f: X \to \bb{G}_m$ and $\mcal{E} \in \modd{}{\D{\bb{G}_m}}$ satisfies $\qu_+$. The application is this paper is the basic lemma (Corollary \ref{Corr: basic lemma}). Recall that $p_1: X \times \bb{A}^1 \to X$ and $p_2: X \times \bb{A}^1 \to \bb{A}^1$ denote the projections.

\begin{proposition} \label{Prop: preperv isom}
    Let $g:= [X \xrightarrow{f} \bb{G}_m \hookrightarrow \bb{A}^1]$ and let $\mcal G$ be a sheaf on $X$ satisfying
    \begin{equation}
        R\Gamma(X \times \bb{A}^1, \left(p_1^{-1}\mcal G\right)_{[X \times \bb{A}^1, \Gamma\cup Y \times \bb{A}^1]}) = 0.
    \end{equation}
    Then
    \begin{equation}
        \Pi(Rg_*\mcal G_{[X, Y]}) \simeq R(p_2)_*\left(p_1^{-1}\mcal G\right)_{[X \times \bb{A}^1, \Gamma \cup Y \times \bb{A}^1]}[1]
    \end{equation}
\end{proposition}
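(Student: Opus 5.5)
Write $A := Rg_*\mcal G_{[X,Y]} \in D^b_c(\bb A^1)$ and $M := \left(p_1^{-1}\mcal G\right)_{[X\times\bb A^1,\Gamma\cup Y\times\bb A^1]}$. The plan is to compute $\Pi(A) = A * j_!j^{-1}\bb C[1]$ by proper base change and the projection formula, identify it with $R(p_2)_*$ of a sheaf on $X \times \bb A^1$, and then use the hypothesis $R\Gamma(M)=0$ to strip off a constant correction term, following \cite[Prop. 3.2.2]{FJexp}.

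\textbf{Step 1: rewrite the convolution on $X\times\bb A^1$.} Write $K := j_!j^{-1}\bb C = \bb C_{[\bb A^1,0]}$. By definition $\Pi(A) = R\text{sum}_*(A\boxtimes K)[1]$. Consider the map $\phi: X\times\bb A^1 \to \bb A^1\times\bb A^1$, $(x,t)\mapsto (g(x), t-g(x))$, so that $\text{sum}\circ\phi = p_2$. Since $A = Rg_*(\cdot)$ and $\text{sum}\circ(g\times\id)$ factors through this change of variables, the Künneth formula and base change give
\begin{equation*}
\Pi(A) \simeq R(p_2)_*\left(p_1^{-1}\mcal G_{[X,Y]} \otimes \psi^{-1}\bb C_{[\bb A^1,0]}\right)[1],
\end{equation*}
where $\psi(x,t)=t-g(x)$. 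Here one should use $R\text{sum}_!$ versus $R\text{sum}_*$ with some care: I expect to argue exactly as in \cite{FJexp}, where this identification of the convolution is made precise. Since $\psi^{-1}(0)=\Gamma$, the inner sheaf is $p_1^{-1}\mcal G$ extended by zero off $\Gamma\cup Y\times\bb A^1$, which is exactly $M$. This is already the claimed formula, provided the $*$/$!$ discrepancy is harmless.

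\textbf{Step 2: control the $*$ versus $!$ discrepancy.} The sum map is not proper, so the honest computation produces $R(p_2)_!$-type or $R(p_2)_*$-type terms that differ by contributions at infinity in the $\bb A^1$-direction. I would proceed as follows. Compactify the second factor and use the triangle
\begin{equation*}
K \to \bb C \to \bb C_0 \xrightarrow{+1}
\end{equation*}
on $\bb A^1$. Then $A*\bb C \simeq$ the constant sheaf with fibre $R\Gamma(\bb A^1, A)$ (up to shift), and $A*\bb C_0 = A$. So $\Pi(A)$ sits in a triangle with $A[1]$ and the constant complex $R\Gamma(X,\mcal G_{[X,Y]})\otimes\bb C_{\bb A^1}[1]$. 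Similarly, $R(p_2)_*M$ sits in the triangle obtained from
\begin{equation*}
0\to M\to p_1^{-1}\mcal G_{[X,Y]}\to (\text{restriction to }\Gamma)\to 0,
\end{equation*}
where $R(p_2)_*$ of the middle term is constant with fibre $R\Gamma(X,\mcal G_{[X,Y]})$ (by homotopy invariance along $\bb A^1$), and $R(p_2)_*$ of the restriction to $\Gamma \simeq X$ is $Rg_*\mcal G_{[X,Y]} = A$. Matching the two triangles, whose connecting maps should be the same, gives a map of triangles. The two outer terms are isomorphic, so the middle terms are too.

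\textbf{Main obstacle.} The hard part is making the map of triangles canonical, i.e.\ producing an actual morphism between $\Pi(A)$ and $R(p_2)_*M[1]$ rather than just two abstractly similar triangles, and checking compatibility of the connecting maps. I would construct the morphism directly from Step 1 and then verify it is an isomorphism on stalks, or equivalently on the cone, using the triangles. The hypothesis $R\Gamma(X\times\bb A^1, M)=0$ enters in exactly this check: it says that $R\Gamma(\bb A^1, R(p_2)_*M)=0$, matching the vanishing of $R\Gamma(\Pi(A))$ from $\Pi(A)\in\text{Perv}_0$-type behaviour. It forces the constant parts to cancel. Without it, the two sides would differ by a constant sheaf.
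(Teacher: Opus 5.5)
Your outline has the right ingredients, but it never actually produces the isomorphism. In particular you have the triangle $R(p_2)_*M\to \underline{R\Gamma(X,\mcal G_{[X,Y]})}\to A\xrightarrow{+1}$, obtained by restricting $p_1^{-1}\mcal G_{[X,Y]}$ to the graph, with the third term identified with $A$ because $p_2$ restricted to $\Gamma\cong X$ is $g$.

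Step 1 is left conditional on an unverified ``discrepancy''. Step 2 rests on the assertion that the maps $\underline{R\Gamma(\bb A^1,A)}\to A$ in your two triangles ``should be the same''. That assertion is the whole content of the argument. Two distinguished triangles with isomorphic outer terms have isomorphic middle terms only once the connecting morphisms are shown to correspond. You give no argument for this; it would require identifying both maps with the adjunction counit for pullback to $\bb A^1$ versus $R\Gamma(\bb A^1,\cdot)$. Your proposed remedy, building the morphism from Step 1, just sends you back to the part that was left open.

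The missing idea, which is what the paper does, is to apply $\Pi$ to your second triangle instead of comparing it with the first. Since $\Pi$ is triangulated and kills constant complexes, this gives at once $\Pi(A)\simeq \Pi(R(p_2)_*M)[1]$, with no compatibility of connecting maps to check. Next, apply the triangle $\underline{R\Gamma(\bb A^1,F)}\to F\to\Pi(F)\xrightarrow{+1}$ of \cite[(2.4.3.2)]{FJexp} to $F=R(p_2)_*M$. Its first term is $R\Gamma(X\times\bb A^1,M)=0$ by hypothesis, so $F\simeq\Pi(F)$. This is the only place the hypothesis enters.

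Your account of the hypothesis's role does not match any step of your own argument. In the triangle matching, once the two maps are identified, TR3 and the five lemma give the isomorphism with no hypothesis at all. In fact the hypothesis always holds: restriction along the graph section $x\mapsto(x,g(x))$ is inverse to the isomorphism $p_1^*\colon R\Gamma(X,\mcal G_{[X,Y]})\to R\Gamma(X\times\bb A^1,p_1^{-1}\mcal G_{[X,Y]})$. So ``without it the two sides would differ by a constant sheaf'' is not a meaningful alternative.

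Likewise, the real issue in Step 1 is not $R\mathrm{sum}_!$ versus $R\mathrm{sum}_*$. After the shear $(a,b)\mapsto(a,a+b)$, the question is whether extension by zero off the diagonal commutes with $R(g\times\mathrm{id})_*$, i.e.\ whether $R(g\times\mathrm{id})_*M$ has zero stalks along $\Delta$. The same graph argument, run over $g^{-1}(D)\times D$ for small discs $D$, shows that it does. So Step 1 could be turned into a direct proof, but as written it is not one.
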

\begin{proof}
    We have
    \begin{equation}
        [X \xhookrightarrow{\iota} \Gamma \cup Y \times \bb{A}^1 \xhookrightarrow{i} X \times \bb{A}^1 \xrightarrow{p_1} X] = \id_X
    \end{equation}
    and in particular,
    \begin{equation}
        \text{Hom}_X(\mcal G, \mcal G) = \text{Hom}_X(\mcal G, (p_1\circ i)_*\iota_*\mcal G) = \text{Hom}_{\Gamma \cup Y \times \bb{A}^1}((p_1\circ i)^{-1}\mcal G, \iota_*\mcal G)
    \end{equation}
    so there is a canonical morphism of sheaves on $\Gamma \cup Y \times \bb{A}^1$
    \begin{equation}
        i^{-1} p_1^{-1}\mcal G_{[\Gamma \cup Y \times \bb{A}^1, Y \times \bb{A}^1]} \to \iota_*\mcal G_{[X, Y]}.
    \end{equation}
    We apply $R(p_2 \circ i)_* = R(p_2)_* \circ i_*$ on both sides. It follows from $[X \xhookrightarrow{\iota} \Gamma \xrightarrow{p_2 \circ i} \bb{A}^1] = g$ that the resulting morphism is
    \begin{equation} \label{eqn: preperv1}
        R(p_2 \circ i)_*(i^{-1} p_1^{-1}\mcal G)_{[\Gamma \cup Y \times \bb{A}^1, Y \times \bb{A}^1]} = R(p_2)_*[i_*i^{-1}(p_1^{-1}\mcal G)_{[X \times \bb{A}^1, \Gamma \cup Y \times \bb{A}^1]})] \to Rg_* \mcal G_{[X, Y]}.
    \end{equation}
    This morphism is an isomorphism. Indeed, for a continuous map $h: S \to T$ of topological spaces and a sheaf $\mcal F$ on $S$, the direct image functor $R^ih_*\mcal F$ is the sheafification of the presheaf $U \to H^i(h^{-1}(U), \mcal F_U)$. We prove the isomorphism on the level of these presheaves, which of course carries over to the sheafification. On the right hand side, the presheaf evaluated on an open $U \subset \bb{A}^1$
    \begin{equation}
        \Gamma(U, (R^jg_* \mcal G_{[X, Y]})^{pre}) = H^i(g^{-1}(U), g^{-1}(U) \cap Y; \mcal G)
    \end{equation}
    for the while the left-hand side is
    \begin{equation}
    \begin{aligned}
        H^j(\Gamma_U \cup Y \times U, Y \times U; i^{-1} p_1^{-1}\mcal G) &\simeq H^j(\Gamma_U, \Gamma_U \cap Y \times U, i^{-1}p_1^{-1}\mcal G) \\
        &\simeq H^j(g^{-1}(U), g^{-1}(U) \cap Y, \mcal G)
    \end{aligned}
    \end{equation}
    where we have set $\Gamma_U := \Gamma \cap X \times U$ and used excision along with the identification $\Gamma_U \xrightarrow{\sim} g^{-1}(U)$. \par 
    We have the usual exact sequence
    \begin{equation}
        0 \to \left(p_1^{-1}\mcal G\right)_{[X \times \bb{A}^1, \Gamma \cup Y \times \bb{A}^1]} \to p_1^{-1}\mcal G \to i_*i^{-1}p_1^{-1}\mcal G \to 0
    \end{equation}
    inducing the exact triangle
    \begin{equation}
        R(p_2)_*\left(p_1^{-1}\mcal G\right)_{[X \times \bb{A}^1, \Gamma \cup Y \times \bb{A}^1]} \to R(p_2)_*p_1^{-1}\mcal G \to R(p_2)_*i_*i^{-1}p_1^{-1}\mcal G \xrightarrow{+1}
    \end{equation}
    The middle term is the constant sheaf $R\Gamma(X, \mcal G)$ on $\bb{A}^1$. It follows easily from its definition that $\Pi$ annihilates constant sheaves, leaving an isomorphism
    \begin{equation}
        \Pi \left(Rg_*\mcal G_{[X, Y]}\right) \simeq \Pi\left(R(p_2)_*\left(p_1^{-1}\mcal G\right)_{[X \times \bb{A}^1, \Gamma \cup Y \times \bb A^1]}\right)[1].
    \end{equation}
    where we have also used the isomorphism \eqref{eqn: preperv1}. It remains only to show that the canonical map
    \begin{equation}
        R(p_2)_*\left(p_1^{-1}\mcal G\right)_{[X \times \bb{A}^1, \Gamma \cup Y \times \bb{A}^1]} \to \Pi\left(R(p_2)_*\left(p_1^{-1}\mcal G\right)_{[X \times \bb{A}^1, \Gamma\cup Y \times \bb{A}^1]}\right)
    \end{equation}
    is an isomorphism. We use the exact triangle from equation (2.4.3.2) in \cite{FJexp}
    \begin{equation}
    \begin{gathered}
        R\Gamma(\bb{A}^1, R(p_2)_*\left(p_1^{-1}\mcal G\right)_{[X \times \bb{A}^1, \Gamma\cup Y \times \bb{A}^1]}) \to R(p_2)_*\left(p_1^{-1}\mcal G\right)_{[X \times \bb{A}^1, \Gamma\cup Y \times \bb{A}^1]} \\ \to \Pi\left(R(p_2)_*\left(p_1^{-1}\mcal G\right)_{[X \times \bb{A}^1, \Gamma\cup Y \times \bb{A}^1]}\right) \xrightarrow{+1}
    \end{gathered}
    \end{equation}
    where the first term is interpreted as a constant complex of sheaves on $\bb{A}^1$. It vanishes by $R\Gamma(\bb{A}^1, \cdot) \circ R(p_2)_* \simeq R\Gamma(X \times \bb{A}^1, \cdot)$ and the assumption on $\mcal G$, concluding the proof.
\end{proof}

\begin{corollary} \label{Corr: preperv isom}
    There is a canonical isomorphism
    \begin{equation}
        \Pi(Rg_*\mcal (f^{-1}L)_{[X, Y]}) \simeq R(p_2)_*\left((f\circ p_1)^{-1} L\right)_{[X \times \bb{A}^1, \Gamma \cup Y \times \bb{A}^1]}[1].
    \end{equation}
\end{corollary}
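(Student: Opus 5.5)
The corollary follows by applying Proposition \ref{Prop: preperv isom} with $\mcal G := f^{-1}L$. Here $L$ is the local system of horizontal sections of $\mcal E$; it has quasi-unipotent monodromy by assumption $(*)$, which is part of $\qu_+$. The only hypothesis of the proposition that needs checking is the vanishing
\begin{equation*}
    R\Gamma\left(X \times \bb{A}^1, \left(p_1^{-1}f^{-1}L\right)_{[X \times \bb{A}^1, \Gamma\cup Y \times \bb{A}^1]}\right) = 0.
\end{equation*}

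To check it, first use $p_1^{-1}f^{-1}L = (f\circ p_1)^{-1}L$. The sheaf in question is then exactly the one appearing in Lemma \ref{Lem: GC vanishes 2}\eqref{GC van 2.1}. That lemma asserts that this sheaf has zero cohomology for any closed $Y \subset X$, provided $L$ is quasi-unipotent.

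Substituting into the conclusion of Proposition \ref{Prop: preperv isom} then gives
\begin{equation*}
    \Pi\left(Rg_*(f^{-1}L)_{[X, Y]}\right) \simeq R(p_2)_*\left((f\circ p_1)^{-1}L\right)_{[X \times \bb{A}^1, \Gamma \cup Y \times \bb{A}^1]}[1].
\end{equation*}

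Canonicity comes from the construction in the proof of the proposition. Every map used there is canonical: the adjunction morphism $i^{-1}p_1^{-1}\mcal G \to \iota_*\mcal G$, the exact triangle for $(\cdot)_{[X\times\bb A^1, \Gamma\cup Y\times\bb A^1]}$, the annihilation of constant sheaves by $\Pi$, and the triangle (2.4.3.2) of \cite{FJexp}. None of these depends on choices such as an eigenvector filtration of $L$. Those choices enter only in the proof of the vanishing statement, not in the definition of the isomorphism.

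There is no real obstacle. The one point to watch is that $X$ may be singular here. Lemma \ref{Lem: GC vanishes 2} and Proposition \ref{Prop: preperv isom} are purely topological, stated for locally compact spaces and closed subsets, so they apply verbatim to the analytification of an arbitrary variety.
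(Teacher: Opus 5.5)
Your proposal is correct and is exactly the paper's argument: take $\mcal G = f^{-1}L$ in Proposition \ref{Prop: preperv isom}, and supply its vanishing hypothesis from Lemma \ref{Lem: GC vanishes 2}\eqref{GC van 2.1}, using $p_1^{-1}f^{-1}L = (f\circ p_1)^{-1}L$ and the quasi-unipotence of $L$. Your extra remark on canonicity is an accurate elaboration that the paper leaves implicit: the choices made in the eigenvector induction enter only the vanishing statement, not the construction of the map.
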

\begin{proof}
    This follows immediately from Lemma \ref{Lem: GC vanishes 2}\eqref{GC van 2.1} and Prop. \ref{Prop: preperv isom} with $\mcal G = f^{-1}L$.
\end{proof}

\begin{definition}
    The \emph{perverse cohomology} of $(X, Y, f^*\mcal{E})$ is
    \begin{equation}
        H^i_{perv}(X, Y, f^*\mcal{E}) := \Pi(\prescript{p}{}{\mcal{H}}^i(Rg_*(f^{-1}L)_{[X, Y]}))
    \end{equation}
\end{definition}

\begin{corollary}
    Suppose $\mcal E$ satisfies Assumption \ref{Assump: half plane} for some $\alpha \in \bb R/2\pi \bb Z$. Then there is a canonical isomorphism
    \begin{equation}
        \Psi_{e^{i\alpha}\infty}(H_{perv}^i(X, Y, f^*\mcal{E})) \simeq H^i_{rd}(X, Y, f^*\mcal{E})
    \end{equation}
\end{corollary}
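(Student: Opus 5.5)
We combine Corollary \ref{Corr: preperv isom} with Corollary \ref{Corr: RDC isom FF}. The main point is to pass from the total complex $\Pi(Rg_*(f^{-1}L)_{[X,Y]})$ to its perverse cohomology sheaves. We use Assumption $\qu$, so that Lemma \ref{Lem: GC vanishes 2} applies.

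First we identify perverse cohomology with ordinary cohomology sheaves. By the cited theorem of Fres\'an--Jossen, $\Pi$ is $t$-exact for the perverse $t$-structure. Hence
\begin{equation}
    H^i_{perv}(X,Y,f^*\mcal E)=\Pi\bigl(\prescript{p}{}{\mcal H}^i(Rg_*(f^{-1}L)_{[X,Y]})\bigr)\simeq \prescript{p}{}{\mcal H}^i\bigl(\Pi(Rg_*(f^{-1}L)_{[X,Y]})\bigr).
\end{equation}
By Corollary \ref{Corr: preperv isom}, the right-hand side is $\prescript{p}{}{\mcal H}^i$ of $K[1]$, where
\begin{equation}
    K:=R(p_2)_*((f\circ p_1)^{-1}L)_{[X\times\bb A^1,\Gamma\cup Y\times\bb A^1]}.
\end{equation}
By Lemma \ref{Lem: GC vanishes 2}\eqref{GC van 2.2}, each ordinary cohomology sheaf $R^jK=\mcal H^j(K)$ satisfies $R^jK[1]\in\text{Perv}_0$. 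A complex whose shifted ordinary cohomology sheaves $\mcal H^j(K)[1]$ are all perverse has perverse cohomology sheaves $\prescript{p}{}{\mcal H}^j(K[1])=\mcal H^j(K)[1]$. To see this, filter $K$ by its canonical truncations $\tau_{\le j}$: the successive pieces are $\mcal H^j(K)[-j]$, and these are perverse up to the shift $[-j-1]$, so the standard truncation triangles are also the perverse truncation triangles. We conclude
\begin{equation}
    H^i_{perv}(X,Y,f^*\mcal E)\simeq R^i(p_2)_*((f\circ p_1)^{-1}L)_{[X\times\bb A^1,\Gamma\cup Y\times\bb A^1]}[1].
\end{equation}
Carrying out this truncation comparison carefully, in particular checking the indexing conventions, is the step I expect to require the most attention.

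Next we apply the nearby fibre functor. By definition,
\begin{equation}
    \Psi_{e^{i\alpha}\infty}(F[1])=\operatorname{colim}_r F(e^{i\alpha}S_r)
\end{equation}
with $F=R^i(p_2)_*(\cdots)$. We want this colimit to equal the stalk $F_z$ for any $z$ with $\operatorname{Re}e^{i\alpha}z\gg0$. Since $F$ is constructible, it is a local system on the complement of a finite set. For $r$ large the half-plane $e^{i\alpha}S_r$ is contractible and contains no singular point, so $F(e^{i\alpha}S_r)\to F_z$ is an isomorphism, and the transition maps in the colimit are compatible with this. Corollary \ref{Corr: RDC isom FF} then identifies this stalk canonically with $H^i_{rd}(X,Y,f^*\mcal E)$.

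Finally, the result does not depend on the choice of $z$, because the identifications are compatible under parallel transport within the half-plane. It also agrees with Definition \ref{Defn: sing RDC}, since Corollary \ref{Corr: RDC isom FF} was derived through \eqref{eqn: isom}. Composing the isomorphisms above gives the canonical isomorphism claimed.
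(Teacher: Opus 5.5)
Your proof is correct and follows the paper's route: $t$-exactness of $\Pi$, Corollary \ref{Corr: preperv isom}, identification of perverse cohomology with shifted ordinary cohomology sheaves, then Corollary \ref{Corr: RDC isom FF}; the only variation is that the paper gets $\prescript{p}{}{\mcal H}^i(\Pi(C)) \simeq \mcal H^{i-1}(\Pi(C))[1]$ for arbitrary $C$ from $\Pi$ landing in $\text{Perv}_0$ (whose objects are concentrated in degree $-1$), whereas you derive the same identification for the explicit complex from Lemma \ref{Lem: GC vanishes 2}\eqref{GC van 2.2} via the truncation filtration, which is equally valid. One small slip, inherited from the statement of Corollary \ref{Corr: RDC isom FF}: the half-plane $e^{i\alpha}S_r$ consists of the $z$ with $\text{Re}\, e^{-i\alpha}z \geq r$, i.e.\ the points near $\sigma = e^{i\alpha}\cdot\infty$ used in \eqref{eqn: isom}, so the $z$ you should take satisfies $\text{Re}\, e^{-i\alpha}z \gg 0$ rather than $\text{Re}\, e^{i\alpha}z \gg 0$.
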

\begin{proof}
    We follow \cite[Corr. 3.2.3]{FJexp} exactly. For any object $C$ in the derived category of constructible sheaves, we have isomorphisms in $\text{Perv}_0$
    \begin{equation}
        \Pi(\prescript{p}{}{\mcal{H}}^i(C)) \simeq \prescript{p}{}{\mcal{H}}^i(\Pi(C)) \simeq \mcal{H}^{i-1}(\Pi(C))[1].
    \end{equation}
    From Corollary \ref{Corr: preperv isom}, setting $C = Rg_*(f^{-1}L)_{[X, Y]}$, we find
    \begin{equation}
    \begin{aligned}
        H^i_{perv}(X, Y, f^*\mcal{E}) &= \mcal H^{i-1}(\Pi(Rg_*(f^{-1}L)_{[X, Y]})[1] \\
        &=  \mcal{H}^{i-1}(R(p_2)_*((f \circ p_1)^{-1}L)_{[X \times \bb{A}^1, \Gamma \cup Y \times \bb{A}^1]}[1])[1] \\
        &= R^i(p_2)_*((f \circ p_1)^{-1}L)_{[X \times \bb{A}^1, \Gamma \cup Y \times \bb{A}^1]}[1].
    \end{aligned}        
    \end{equation}
    The result follows from Corr. \ref{Corr: RDC isom FF}.
\end{proof}

\subsection{A basic lemma}

Just as in the exponential case, the perverse realisation provides a quick proof of a basic lemma, with the help of Beilison's famous result: 

\begin{theorem}[Beilinson's basic lemma]
    Let $f: X \to S$ be a morphism of quasi-projective varieties and let $F$ be a perverse sheaf on $X$. Set $d = \dim X$. Then there exists a dense open subset $j: U \hookrightarrow X$ such that
    \begin{equation}
        \prescript{p}{}{\mcal{H}}^i(Rf_*j_!j^{-1} F) = 0 \text{ for all } i < 0.
    \end{equation}
\end{theorem}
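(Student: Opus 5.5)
The result is due to Beilinson, and the paper only needs to quote it. If I were to prove it, I would follow Beilinson's strategy: combine Artin vanishing, in its $t$-exactness form, with a generic hyperplane section chosen transversal to everything in sight.

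\textbf{Reductions.} Perverse cohomology sheaves on $S$ can be tested locally on $S$, so I may assume $S$ is affine. Shrinking $X$ to a dense affine open $X_0$ is harmless, since a dense open $U\subset X_0$ is dense in $X$ and $j_!$ factors through the extension by zero from $X_0$. So I also assume $X$ is affine.

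\textbf{Compactification and hyperplane.} I would choose a factorisation
\begin{equation*}
X \xhookrightarrow{k} \overline{X} \xrightarrow{\bar f} S
\end{equation*}
with $\bar f$ projective and $k$ an open immersion. Since $X$ is affine, $k$ is an affine morphism. I then pick a hyperplane section $H\subset\overline{X}$ in general position and set $U = X\setminus H$, with $j\colon U\hookrightarrow X$. Here $j$ is affine, and so are the two complements $\overline{X}\setminus H\to S$ and $k'\colon U\hookrightarrow \overline{X}\setminus H$. Write $\bar\jmath\colon \overline{X}\setminus H\hookrightarrow \overline X$.

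\textbf{The key commutation.} I want the isomorphism
\begin{equation*}
Rk_*\, j_!\, j^{-1}F \simeq \bar\jmath_!\, Rk'_*\, j^{-1}F .
\end{equation*}
This says that ``extension by zero across $H$'' commutes with ``$Rk_*$ across the boundary $\overline{X}\setminus X$''. It holds when $H$ is transversal to a Whitney stratification of $\overline{X}$ adapted to $Rk_*F$ and to the boundary. In that case the local structure of $Rk_*F$ near $H\cap(\overline X\setminus X)$ is a product with the normal direction to $H$, and a local computation (a generic base change statement) gives the isomorphism.

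\textbf{Conclusion from the commutation.} Granting it, I get
\begin{equation*}
Rf_*\, j_!\, j^{-1}F \simeq R\bar f_*\,\bar\jmath_!\,Rk'_*\,j^{-1}F \simeq R(\bar f\circ\bar\jmath)_!\,Rk'_*\,j^{-1}F,
\end{equation*}
using that $\bar f$ is proper. Now $j^{-1}F$ is perverse, and $k'$ is an affine open immersion, so $Rk'_*$ is $t$-exact and $Rk'_*j^{-1}F$ is perverse. Finally, $\bar f\circ \bar\jmath$ is affine because $S$ is affine and $\overline{X}\setminus H$ is affine. By Artin's vanishing theorem in its dual form, $R(\cdot)_!$ along an affine morphism is left $t$-exact. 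Hence the right-hand side lies in ${}^pD^{\geq 0}$, which is exactly ${}^p\mcal{H}^i = 0$ for $i<0$.

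\textbf{Main obstacle.} The hard step is the transversality and commutation isomorphism. I would first try to derive it from generic base change, or equivalently from the fact that a general hyperplane is non-characteristic for the constructible complex $Rk_*F$ on $\overline{X}$, stratum by stratum. If the ground field is too small for a general hyperplane to be defined over it, I would pass to a high-degree hypersurface via a Bertini argument.
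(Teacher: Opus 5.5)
The paper does not prove this statement; it quotes it from Beilinson. Your core argument is Beilinson's original hyperplane-section proof, and it is sound: a relative compactification $X\xhookrightarrow{k}\overline X\xrightarrow{\bar{f}}S$, a general hyperplane section $H$, the commutation $Rk_*j_!\simeq\bar{\jmath}_!Rk'_*$, then properness of $\bar{f}$ together with Artin vanishing for $R(\bar{f}\circ\bar{\jmath})_!$.

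It helps to pin down exactly what the commutation requires. With $G=Rk_*j_!j^{-1}F$ one has $\bar{\jmath}^{-1}G\simeq Rk'_*j^{-1}F$. Hence $\bar{\jmath}_!Rk'_*j^{-1}F\simeq G$ if and only if $i_H^{-1}G=0$. Apply $i_H^{-1}Rk_*$ to the triangle $j_!j^{-1}F\to F\to i_{X\cap H*}i_{X\cap H}^{-1}F$ and use $Rk_*i_{X\cap H*}=i_{H*}Rk_{H*}$, where $k_H\colon X\cap H\hookrightarrow H$. This shows the condition is equivalent to the base change isomorphism $i_H^{-1}Rk_*F\simeq Rk_{H*}\,i_{X\cap H}^{-1}F$. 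That is a condition on $F$ alone, not on the $H$-dependent $j_!j^{-1}F$. Generic base change for the universal hyperplane section $\mathcal H\subset\overline X\times\check{\mathbb P}^N$, combined with smooth base change along $\mathcal H\to\overline X$, gives it for all $H$ in a dense Zariski open subset of $\check{\mathbb P}^N$. Over a number field this subset has rational points, so your Bertini fallback is not needed in the paper's setting.

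Where your proposal goes wrong is in the two preliminary reductions; neither is valid as stated.

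\textbf{Reduction to $X$ affine.} Write $j_0\colon X_0\hookrightarrow X$ and $j_U\colon U\hookrightarrow X_0$. The statement for $(X_0,f\circ j_0,F|_{X_0})$ controls $R(f\circ j_0)_*j_{U!}(F|_U)=Rf_*Rj_{0*}j_{U!}(F|_U)$. What you need is $Rf_*j_{0!}j_{U!}(F|_U)$. The fact that $j_!$ factors through $j_{0!}$ does not relate these two objects.

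\textbf{Reduction to $S$ affine.} Vanishing of ${}^p\mathcal H^i$ is indeed local on $S$. However, the opens $U_\alpha$ produced over an affine cover $\{S_\alpha\}$ need not glue, and you cannot intersect them, because the property is not stable under shrinking $U$. For example, take $X=S=\mathbb A^2$, $f=\mathrm{id}$ and $F=\mathbb Q[2]$. The choice $U=X$ works, but $U'=\mathbb A^2\setminus\{0\}$ gives ${}^p\mathcal H^{-1}(j'_!j'^{-1}F)\simeq i_{0*}\mathbb Q\neq 0$.

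Fortunately neither reduction is needed; simply drop both.
\begin{itemize}
\item For the base: take $\overline X$ to be the closure of $X$ in $\mathbb P^N\times S$, which exists since $X$ and $S$ are quasi-projective, and take $H$ to be the preimage of a general hyperplane of $\mathbb P^N$. Then $\overline X\setminus H$ is closed in $\mathbb A^N\times S$, so $\bar{f}\circ\bar{\jmath}$ is an affine morphism. That is all the relative Artin vanishing theorem requires ($Rg_!$ is left $t$-exact for affine $g$).
\item For the source: you only need $Rk'_*j^{-1}F\in{}^pD^{\geq 0}$. This holds because $Rk'_*$ is left $t$-exact for every open immersion, so affineness of $X$, $k$ or $k'$ plays no role.
\end{itemize}
With these changes your argument proves the statement as given.
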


\begin{proposition}[Basic lemma] \label{Corr: basic lemma}
    Let $X$ be an affine variety of dimension $d$, $Y \subset X$ a closed subvariety and let $f: X \to \bb{G}_m$ be a morphism. We take $\mcal{E} \in \modd{}{\D{\bb{G}_m}}$ satisfying Assumption \ref{Assump on E}. Then there exists a subvariety $Y \subset Z \subset X$ such that
    \begin{equation}
        H^i_{rd}(X, Z, f^*\mcal{E}) = 0 \text{ for all } i \neq d,
    \end{equation}
    where $H^i_{rd}$ is meant in the sense of Defn. \ref{Defn: sing RDC} if $X$ is singular.
\end{proposition}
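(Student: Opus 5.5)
We follow the proof of the exponential basic lemma in \cite[\S 3.3]{FJexp}: reduce to Beilinson's basic lemma via the perverse realisation and the nearby fibre at infinity. Since the statement only assumes $\mbf{A}$, we first use the remark after Thm.~\ref{Thm: L* calcs RDC}: the dimension of $H^i_{rd}(X, Z, f^*\mcal E)$ depends only on the local system $L$. To control this dimension we may either replace $\mcal E$ by a connection satisfying $\qu_+$ with the same horizontal sections, or argue directly with the sheaf-theoretic description of Defn.~\ref{Defn: sing RDC}. In the latter case we need only an $\alpha$ for which the colimit computes the groups, which the remark guarantees. If quasi-unipotence is unavailable we instead take $\mcal G = f^{-1}L$ and apply Prop.~\ref{Prop: preperv isom} directly. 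The vanishing hypothesis of that proposition must then be checked for general $L$. The induction of Lemma~\ref{Lem: GC vanishes 2} works for any local system on $\bb G_m$ admitting a filtration by rank one subsystems, and over $\bb C$ every monodromy matrix can be triangularised. So we would first extend Lemma~\ref{Lem: GC vanishes 1} to arbitrary rank one $L$ with monodromy $c \in \bb C^\times$. The obstacle in doing so is that the $b$-fold cover trick fails for non-roots of unity. There we would try the following: $\Gamma \hookrightarrow X \times \bb A^1$ is a fibrewise deformation retract over $X$ along the contractible $\bb A^1$ factor, and the sheaf $(f\circ p_1)^{-1}L$ is pulled back from $X$, so the relative cohomology vanishes for any sheaf pulled back from $X$.

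\textbf{Main argument.} Set $C := Rg_*(f^{-1}L)[d]$, where $g = [X \to \bb G_m \hookrightarrow \bb A^1]$. Since $X$ is affine and $f^{-1}L[d]$ is perverse up to the usual constraints, apply Beilinson's basic lemma to $g$ and the perverse sheaf $F = f^{-1}L[d]$ (after replacing by its perverse truncation if $X$ is singular). This gives a dense open $U \subset X$ with $\prescript{p}{}{\mcal H}^i(Rg_* j_!j^{-1}F) = 0$ for $i<0$. Replacing $U$ by $U \setminus Y$, set $Z := X \setminus U \supset Y$, so that $j_!j^{-1}f^{-1}L = (f^{-1}L)_{[X,Z]}$. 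Then $\prescript{p}{}{\mcal H}^{i}(Rg_*(f^{-1}L)_{[X,Z]}) = 0$ for $i<d$. Since $\Pi$ is $t$-exact, Corollary~\ref{Corr: preperv isom} and the computation preceding the basic lemma give $H^i_{perv}(X,Z,f^*\mcal E)=0$ for $i<d$. Applying the fibre functor $\Psi_{e^{i\alpha}\infty}$ and the last corollary yields $H^i_{rd}(X,Z,f^*\mcal E)=0$ for $i<d$.

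\textbf{Upper vanishing.} For $i>d$ we use Artin vanishing instead of perversity. By Corollary~\ref{Corr: RDC isom FF} (or directly by Defn.~\ref{Defn: sing RDC}), $H^i_{rd}(X,Z,f^*\mcal E) \simeq H^i(X, Z\cup f^{-1}(z), f^{-1}L)$ for generic $z$. This is the cohomology of the constructible sheaf $(f^{-1}L)_{[X, Z \cup f^{-1}(z)]}$ on the affine $d$-dimensional $X$, which vanishes for $i>d$ by Artin's theorem. Equivalently, $Rg_*$ is right $t$-exact for affine $g$, so $\prescript{p}{}{\mcal H}^i$ vanishes for $i>0$ after the shift.

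The step most likely to cause trouble is the passage from $\mbf A$ to the setting where the perverse/rapid decay comparison is canonically available. This means making the extension of Lemma~\ref{Lem: GC vanishes 1} to non-quasi-unipotent monodromy rigorous, and choosing $\alpha$ without Assumption~\ref{Assump: half plane}. Failing that, we would fall back on the dimension statement of Remark~\ref{Rem: dim of RD} together with the non-canonical isomorphism from the remark after Thm.~\ref{Thm: L* calcs RDC}. For singular $X$ we also need Beilinson's lemma applied to the perverse sheaf $f^{-1}L[d]$, shrinking $U$ into the smooth locus if necessary.
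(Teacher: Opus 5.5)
Your main argument follows the paper's route. The paper's proof just transposes \cite[Cor.~3.3.3]{FJexp}: it applies Beilinson's lemma to $g$ and $f^{-1}L[d]$, then uses $\Pi$ and $\Psi_{e^{i\alpha}\infty}$ through Corollary~\ref{Corr: preperv isom} in degrees $<d$, and Artin vanishing (equivalently, right $t$-exactness of $Rg_*$ and $j_!$) in degrees $>d$.

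\textbf{The failing step.} The step that fails is ``replacing $U$ by $U\setminus Y$'' \emph{after} Beilinson's lemma has produced $U$, and likewise ``shrinking $U$ into the smooth locus'' in the singular case. The vanishing $\prescript{p}{}{\mcal H}^i(Rg_*j_!j^{-1}F)=0$ for $i<0$ is not preserved when $U$ is shrunk. Concretely, suppose $d\geq 2$, that $Z$ satisfies $H^i_{rd}(X,Z,f^*\mcal E)=0$ for $i\neq d$, and that $y\notin Z$ is a point. Put $A=Z\cup f^{-1}(z)$ with $\mathrm{Re}(e^{i\alpha}z)\gg 0$. The long exact sequence of the triple $(X,A\cup\{y\},A)$, together with $H^0(X,A;f^{-1}L)=H^1(X,A;f^{-1}L)=0$, gives $H^1_{rd}(X,Z\cup\{y\},f^*\mcal E)\simeq (f^{-1}L)_y\neq 0$. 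So the $Z$ you end up with need not satisfy the conclusion. Note also that the statement is only meaningful with $\dim Y<d$ and $\dim Z<d$, as in FJexp, since $Z=X$ satisfies the displayed vanishing trivially.

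\textbf{The repair.} Shrink \emph{before} applying Beilinson's lemma.
\begin{enumerate}
\item Pick a regular function on $X$ that vanishes on $Y$, on $X_{\mathrm{sing}}$ and on the components of dimension $<d$, but on no $d$-dimensional component. Let $j_0\colon U_0\hookrightarrow X$ be its non-vanishing locus.
\item Then $j_0$ is affine, so $F':=j_{0!}\bigl(f^{-1}L[d]\vert_{U_0}\bigr)$ is perverse.
\item Let $j\colon U\hookrightarrow X$ be the dense open furnished by Beilinson's lemma for $(g,F')$. Then $j_!j^{-1}F'=(f^{-1}L)_{[X,Z]}[d]$ with $Z:=X\setminus(U\cap U_0)\supset Y$ and $\dim Z<d$.
\end{enumerate}
From here your argument goes through verbatim, and no perverse truncation is needed. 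Equivalently, use Beilinson's lemma in the form employed by Nori and Fres\'an--Jossen, where $X\setminus U$ may be required to contain a prescribed closed subset of dimension $<d$.

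\textbf{Other remarks.} Your fibrewise-retraction argument is correct and is a genuine simplification. Since $\mcal G\to Rp_{1*}p_1^{-1}\mcal G$ is an isomorphism and $p_1\vert_\Gamma$ is a homeomorphism with inverse the graph section, the restriction $R\Gamma(X\times\bb A^1,p_1^{-1}\mcal G)\to R\Gamma(\Gamma,p_1^{-1}\mcal G\vert_\Gamma)$ is an isomorphism for \emph{every} sheaf $\mcal G$ on $X$. Hence the hypothesis of Proposition~\ref{Prop: preperv isom} holds without quasi-unipotence, and Lemma~\ref{Lem: GC vanishes 1} is not needed here. The half-plane condition is still required for $\Psi_{e^{i\alpha}\infty}$ and Definition~\ref{Defn: sing RDC}. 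Under $\mbf A$ alone you, like the paper, have to rely on the unproved remark following Theorem~\ref{Thm: L* calcs RDC}.
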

\begin{proof}
    The proof is exactly the same as the proof of the exponential basic lemma \cite[Corollary 3.3.3]{FJexp}. The only thing to be noted is that $f^{-1}L$ is a local system on a smooth variety of dimension $d$, so $f^{-1}L[d] \in D^b_c(X)$ is perverse.
\end{proof}

\section{Endowing RD cohomology with a rational structure} \label{Sec: rational structures}

Let $K \subset \bb{C}$ be a subring of the complex numbers. In the following, we assume for ease of notation that $Y = \emptyset$.
\begin{definition}
    A $K$-rational structure on a local system of $\bb{C}$-vector spaces $L$ is a local system of $K$-modules $L_K$ together with an isomorphism of complex local systems $L_K \otimes_K \bb{C} \xrightarrow{\sim} L$.
\end{definition}
We can now state the main result of this paper.
\begin{theorem} \label{Thm: main result}
    Let $X$ be a (smooth) variety over $k$ and let $Y \subset X$ be a closed subvariety. We take $\mcal E \in \modd{}{\D{\bb G_m}}$ satisfying $\mbf{A}_+$ (Assumptions \ref{Assump on E} and \ref{Assump: half plane}) and let $L$ be its corresponding local system of horizontal sections. Then any $K$-rational structure $L_K$ on $L$ induces a $K$-rational structure on the rapid decay cohomology, that is, for each $i \geq 0$, there is a $K$-vector space $H^i_{rd}(X, f^*\mcal{E})_{K}$ equipped with a canonical isomorphism
    \begin{equation}
        H^i_{rd}(X, Y, f^*\mcal{E}) = H^i_{rd}(X, Y, f^*\mcal{E})_{K} \otimes_{K} \bb{C}.
    \end{equation}
\end{theorem}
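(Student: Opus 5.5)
The plan is to apply Theorem \ref{Thm: L* calcs RDC}, which expresses rapid decay cohomology through the full local system $L$ rather than the subsheaf $L^{mod}$. Once that is done, the $K$-structure comes from ordinary relative sheaf cohomology with coefficients in a local system of $K$-modules, together with a universal coefficient argument.

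Fix $\sigma \in \cap_{j\in I} S_j$, which is non-empty by Assumption \ref{Assump: half plane}. Theorem \ref{Thm: L* calcs RDC} gives a canonical isomorphism
\begin{equation*}
H^i_{rd}(X, Y, f^*\mcal E) \simeq H^i\bigl(\widetilde X, \widetilde Y \cup \tilde f^{-1}(\sigma), \tilde f^{-1}L\bigr).
\end{equation*}
Write $L_K$ also for its direct image on $\rob$. I would then define
\begin{equation*}
H^i_{rd}(X, Y, f^*\mcal E)_K := H^i\bigl(\widetilde X, (\tilde f^{-1}L_K)_{[\widetilde X, \widetilde Y \cup \tilde f^{-1}(\sigma)]}\bigr).
\end{equation*}
Equivalently, using \eqref{eqn: isom}, one can take $H^i(X, Y\cup f^{-1}(z), f^{-1}L_K)$ for $z$ near $\sigma$, or the colimit of Definition \ref{Defn: sing RDC} with $L_K$ in place of $L$. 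The last description has the advantage of living entirely on $X$, so it visibly does not depend on the compactification.

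The comparison map comes from $L_K\otimes_K\bb C \simeq L$. Since $(\cdot)_{[Z,A]}$ and pullback commute with $\otimes_K \bb C$, we get
\begin{equation*}
\tilde f^{-1}L_{[\ldots]} \simeq (\tilde f^{-1}L_K)_{[\ldots]} \otimes_K \bb C.
\end{equation*}
I then need $R\Gamma(\mscr F \otimes_K \bb C) \simeq R\Gamma(\mscr F)\otimes_K \bb C$. This holds because $\bb C$ is flat over the field $K$ (a filtered colimit of free modules). On a paracompact, locally contractible space such as $\widetilde X$, it then suffices that cohomology commutes with filtered colimits of sheaves, which holds once $\widetilde X$ is replaced by a compact homotopy-equivalent model, or once one works with $X$ and the relative pair directly, which have the homotopy type of a finite CW pair. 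Taking $H^i$ of the derived tensor product with a flat module gives
\begin{equation*}
H^i(\ldots, \tilde f^{-1}L_K)\otimes_K \bb C \xrightarrow{\sim} H^i(\ldots, \tilde f^{-1}L).
\end{equation*}
Finiteness follows from Remark \ref{Rem: dim of RD}, or from constructibility.

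The main obstacle is \emph{canonicity}: independence of the choice of $\sigma$ (and of $\alpha$), and compatibility with the identification in Theorem \ref{Thm: L* calcs RDC}. The key point is that the isomorphism of Theorem \ref{Thm: L* calcs RDC} arises from the inclusion $L^{mod} \hookrightarrow L_{[\rob,\sigma]}$, which is defined over $\bb C$. For $\sigma, \sigma'$ in the connected set $\cap_j S_j$, I would compare both with the relative group for the arc between them, using the argument after Definition \ref{Defn: sing RDC}: the inclusions of fibres are homotopy equivalences, by Ehresmann and by \cite[Prop. 10.4]{CHH20}. These inclusions are maps of pairs, so the induced isomorphisms are defined at the level of $L_K$-coefficients and are therefore $K$-rational. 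The resulting triangle with $H^i_{rd}$ commutes because the restriction maps are compatible with $L^{mod}\subset L$.

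Thus the $K$-structure is well-defined up to canonical isomorphism. I would also remark that it depends only on $L_K$, and not on the Stokes data, since $L^{mod}$ never appears in the definition.
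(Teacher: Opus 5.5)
Your proposal is correct and follows the paper's proof. Like the paper, you use Theorem \ref{Thm: L* calcs RDC} to replace $L^{mod}$ by $\tilde f^{-1}L$ relative to $\widetilde Y\cup\tilde f^{-1}(\sigma)$, define the $K$-form with $\tilde f^{-1}L_K$ coefficients, and pull $\otimes_K\bb C$ out of cohomology; your justifications of that base-change step (flatness plus finite homotopy type of the pair) and of independence of $\sigma$ are sound, and the paper leaves both implicit.
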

\begin{proof}
    This follows straightforwardly from the work we have already done. Keeping the notation of Section \ref{sec: Conn exp type}, Thm. \ref{Thm: L* calcs RDC} states that if $\mcal E$ satisfies Assumption $\mbf{A}_+$ for some $\sigma \in \partial\rob$, there is a canonical isomorphism 
\begin{equation} \label{eqn: RDC comp isom}
    H^i_{rd}(X, Y, f^*\mcal{E}) \simeq H^i\left(\widetilde{X}, \widetilde Y \cup \tilde f^{-1}(\sigma), \tilde f^{-1}L\right).
\end{equation}
Given a $K$-rational structure $L_K$ on $L$, the isomorphism $L \simeq L_K \otimes_K \bb{C}$ means that
\begin{equation}
    H^i_{rd}(X, Y, f^*\mcal{E}) \simeq H^i\left(\widetilde{X}, \widetilde Y \cup \tilde f^{-1}(\sigma), \tilde f^{-1}L_K \otimes \bb C\right) \simeq H^i\left(\widetilde{X}, \widetilde Y \cup \tilde f^{-1}(\sigma), \tilde f^{-1}L_K\right) \otimes_K \bb{C}
\end{equation}
inherits a $K$-rational structure.
\end{proof}

\subsection{Determining the $\bb Q(\zeta)$-structure} \label{Sec: determining structure}

Recall that any connection on $\bb G_m$ of type $E$ has a basis of solutions of the form
\begin{equation} \label{eqn: andre basis 2}
    \begin{pmatrix}
        F_1 & \dots & F_n
    \end{pmatrix} z^{\Gamma_0},
\end{equation}
with $F_i$ an $E$-function and $\Gamma_0$ an upper triangular matrix with entries in $\bb Q$. More explicitly, assuming $\Gamma_0$ to be in Jordan normal form and restricting attention to the top Jordan block of size $m >0$ and eigenvalue $q \in \bb Q$, the solutions are of the form
\begin{equation}
    z^q\cdot \left(F_1, F_2+F_1\log(z), F_3 + F_2\log(z) + F_1 \frac{\log^2(z)}{2!}, \dots, F_m + F_{m-1}\log(z)+\dots+F_1\frac{\log^m(z)}{m!}\right),
\end{equation}
Note that each Jordan block corresponds to a subsystem of $L = \text{Sol}(\mcal E)$. Since $\log(e^{2\pi i}z) = 2\pi i+\log(z)$, with respect to this basis, the monodromy has entries in $\bb Q(e^{2\pi iq})[2\pi i]$. It is easy to check that the basis
\begin{equation}
\begin{aligned}
z^q\cdot \Bigg((2\pi i)^{m-1}F_1, (2\pi i)^{m-2}(F_2+ F_1\log(z)), (2\pi i)^{m-3}\left(F_3 + F_2\log(z) + F_1 \frac{\log^2(z)}{2!}\right), \\
\dots, (2\pi i)^0\left(F_m + F_{m-1}\log(z)+\dots+F_1\frac{\log^m(z)}{m!}\right)\Bigg).    
\end{aligned}
\end{equation}
has monodromy defined over $\bb Q(e^{2\pi iq})$. Choose $\zeta$ to be a root of unity of order equal to the lowest common multiple of the denominators of the eigenvalues of $\Gamma_0$. Doing the same for all Jordan blocks, the local system $L$ is endowed with a $\bb Q(\zeta)$-structure. In the setting of Thm. \ref{Thm: main result}, this induces a $\bb Q(\zeta)$-structure on the rapid decay cohomology groups, which gives us the version stated in Thm. \ref{Thm: intro}.\par

\begin{remark} \label{Rem: non canon}
    This structure cannot be chosen canonically; the choice of basis is key. Having chosen $F_j$ for $j < i$, each $F_i$ is determined only up to addition of some $k^\times$-multiple of $F_1$. However, one may check that the sheaf of $k(\zeta)[2\pi i]$-modules generated by a basis of the form \eqref{eqn: andre basis 2} is independent of the choice, and the induced $k(\zeta)[2 \pi i]$-structure on the cohomology groups is thus canonical.
\end{remark}

\begin{remark}
    Assumption \ref{Assump: half plane} is quite natural. Our principal interest is to investigate absolutely convergent integrals of the form
    \begin{equation}
        \int_\sigma F\circ f(x)\cdot \omega
    \end{equation}
    where $X$ is a variety, $f: X \to \bb G_m$ is a non-vanishing function, $F$ is some $E$-function, $\sigma$ is a rapid decay cycle for $F\circ f(x)$ and $\omega$ is a regular differential form on $X$. Choose $\mcal E$ with $F$ as a horizontal section. Then period pairing on the cohomology groups of $(X, f, \mcal E)$ may be considered in this way, that is, in terms of integration of rapid-decay cycles (see Section \ref{Subsec: RDC}). If $F$ has no rapid decay direction at $\infty$, the image of $\sigma$ is not permitted to approach the boundary of $X$. We can only expect $F$ to have a rapid decay direction if Assumption \ref{Assump: half plane} is satisfied by $\mcal E$.
\end{remark}

Furthermore, it is very easy to construct such examples. Given any $\D{\bb G_m}$-module $\mcal E$ of type $E$, the `shifted' module $\mcal E \otimes \mcal{O}_{\bb G_m}e^{\alpha z}$ satisfies $\qu_+$ for $\abs\alpha$ large enough (the exponents are $\lambda_i+\alpha$).

With this in mind, we make the following tentative definition.
\begin{definition}[$E$-period] \label{Defn: E period}
    Let $\Psi$ be an $E$-operator defined over a number field $k$ and let $\mcal E = \D{\bb A^1}/\D{\bb A^1}\Psi \in \modd{}{\D{\bb A^1}}$. Let $X$ be a smooth variety, $Y$ a closed subvariety and $f: X \to \bb A^1$, all defined over $k$, be such that one of the following holds:
    \begin{enumerate}
        \item $f(X)$ is finite and does not contain $0 \in \bb A^1$; 
        \item $\mcal{E}$ satisfies Assumption \ref{Assump: half plane} and $0 \in f(X)$ only if $0$ is not a singularity of $\Psi$.
    \end{enumerate}
    Choose a $\bb Q(\zeta)$-structure on $L = \text{Sol}(\mcal E)$ as described above for some root of unity $\zeta$. We define an \emph{effective $E$-period} of the quadruple $(X, Y, f, \mcal E)$ to be an element of the $\bb Q(\zeta) \cdot k$-vector space generated by the matrix elements of the comparison isomorphism
    \begin{equation}
        H^i_{dR}(X, Y, f^*\mcal{E}) \otimes_k \bb{C} \xrightarrow{\sim} H^i_{rd}(X, Y, f^*\mcal{E})_{\bb Q(\zeta)} \otimes_{\bb Q(\zeta)} \bb{C}.
    \end{equation}
    We define the \emph{ring of effective $E$-periods} $\mcal P_{\text{eff}}^E$ to be the sub-$\bb Q$-algebra of $\bb C$ generated by the $E$-periods of all quadruples as above and ranging over all number fields $k$. We define the \emph{ring of $E$-periods} to be $\mcal P^E = \mcal P^E_{\text{eff}}[(2\pi i)^{-1}]$.
\end{definition}

\begin{remark}
    Formulated in this way, the exponential periods form a subset of the $E$-periods. By a note of Rivoal and Fischler \cite{Rivoal17}, there are essentially no interesting $E$-operators without a singularity at $0$ besides the usual exponential one. Thus, we have not lost anything by assuming all our maps $f$ to land in $\bb G_m$ up to this point.
\end{remark}

\section{Conclusion and outlook} \label{Sec: Conc}

The broad goal of this paper is to define a generalisation of the exponential periods by considering a wider class of algebraic $\D{}$-modules than the exponential twists of the structure sheaf. Our investigation has shone some light on the circumstances under which one can reasonably expect to be able to perform such a construction. In this setting, our version of rapid decay cohomology on singular varieties and Nori's basic lemma could also be key tools in a motivic theory of these numbers.

An \emph{$E$-value} is a complex number realised as the evaluation of an $E$-function at an algebraic argument $\alpha \in \qbar^\times$. Since $F(z)$ is an $E$-function if and only if $F(\alpha z)$ is, we may take this argument to be $1$. As $E$-functions themselves form a ring, it is manifest that $E$-values do so as well. They have very interesting arithmetic properties, most notably the theorem of Siegel-Shidlovskii \cite{NestShid96, andre00_2, Beukers06}. \par
Let $F$ be an $E$-function and $\alpha \in k^\times$. Then $F$ is a horizontal section of some $\mcal E \in \modd{}{\D{\bb G_m}}$ of type $E$. Choosing $X = \spec k$ and $f: \spec k \to \bb G_m$ to be the map with image $\alpha$ realises $(2\pi i)^nF(\alpha)$ as an effective $E$-period for some $n \geq 0$, so we have the inclusion
\begin{equation}
    \{E\text{-values}\} \subset \mcal P^E.
\end{equation}

Another aim of this paper is to attempt to lay some foundations for a study of (the ring of) $E$-values and exponential periods, under the wider umbrella of $E$-periods. It is as yet unclear if $\mcal P^E$ constitutes a significant enlargement of either ring. It is well-known that certain $E$-values are exponential periods, for example special values of the Bessel functions and of the exponential integral function $E(z) = \int_1^\infty e^{-zx}\frac{\diff x}x$. Furthermore, in another upcoming work of Fres\'an and Jossen on $E$-operators arising from geometry, it is shown that the exponential periods form a subring of the ring
\begin{equation*}
    R := \bb Q\left[\{\text{periods}\}, \{\Gamma(a)\}_{a \in \bb Q\cap (0, 1)}, \gamma, \{E\text{-values}\}\right],
\end{equation*}
where $\gamma$ denotes the Euler constant. This result has already appeared in the notes \cite{Fresannotes}. Since $\gamma$ and $\Gamma(a)$ ($a \in \bb Q\cap (0, 1)$) are exponential periods and all exponential periods are $E$-periods, it follows that $R \subset \mcal P^E$. We ask if this inclusion is in fact an equality, which would mean a kind of `closure' result, i.e. up to multiplication by classical periods, rational Gamma values and the Euler constant, there are no interesting numbers obtained by integrating $E$-functions besides their own special values. This is a topic for further research and one we expect to return to. \par

\nocite{*}
\bibliographystyle{alpha}
\bibliography{Bibliography}

\end{document}